\newcommand{\Z}{{\text{$\mathbb Z$}}}
\newcommand{\R}{{\text{$\mathbb R$}}}
\newcommand{\C}{{\text{$\mathbb C$}}}
\newcommand{\calF}{{\text{$\mathcal F$}}} 
\newcommand{\calG}{{\text{$\mathcal G$}}}
\newcommand{\calH}{{\text{$\mathcal H$}}}
\newcommand{\cinf}{{\text{$C^\infty$}}}
\newcommand{\ar}{\rightarrow}
\newcommand{\G}{\Gamma}
\newcommand{\g}{{\text{$\mathfrak g$}}}
\newcommand{\supp}{\operatorname{supp}}
\newcommand{\codim}{\operatorname{codim}}
\newcommand{\End}{\operatorname{End}}
\newcommand{\Tr}{\operatorname{Tr}}
\newcommand{\dom}{\operatorname{dom}}
\newtheorem{thm}{Theorem}[section]
\newtheorem{lem}[thm]{Lemma}
\newtheorem{cor}[thm]{Corollary}
\newtheorem{prop}[thm]{Proposition}
\theoremstyle{definition}
\newtheorem{defn}[thm]{Definition}
\newtheorem{exmp}[thm]{Example}
\theoremstyle{remark}
\newtheorem{rem}{Remark}
\newtheorem{case}{Case}
\newtheorem{ack}{Acknowledgment}   
\title[Leafwise reduced cohomology]{The dimension of the leafwise reduced cohomology} 
\author[J.A. \'Alvarez L\'opez]{Jes\'us A. \'Alvarez L\'opez} 
\address{Departamento de Xeometr\'{\i}a e Topolox\'{\i}a\\
         Facultade de Matem\'aticas\\
         Universidade de Santiago de Compostela\\
         15782 Santiago de Compostela\\ 
         Spain}
\email{jesus.alvarez@usc.es}
\thanks{Partially supported by Xunta de Galicia (Spain)}
\author[G. Hector]{Gilbert Hector}
\address{Institut Girard Desargues\\
         UPRESA 5028\\
         43, boulevard du 11 Novembre 1918\\ 
         Universit\'e Claude Bernard-Lyon~I\\  
         69622 Villeurbanne Cedex\\ 
         France}
\email{hector@geometrie.univ-lyon1.fr}
\subjclass{57R30}
\begin{document}

\bibliographystyle{plain}

\maketitle

\begin{abstract}
Geometric conditions are given so that the
leafwise reduced cohomology is of infinite dimension, specially for foliations with dense leaves on
closed manifolds. The main new definition involved is the intersection number of subfoliations
with ``appropriate coefficients''. The leafwise reduced cohomology is also described for
homogeneous foliations with dense leaves on closed nilmanifolds.
\end{abstract}

\section{Introduction} \label{sec:introduction}
Let \calF\  be a \cinf\ foliation on a manifold $M$. The {\em leafwise de~Rham complex}
$(\Omega^{\cdot}(\calF),d_\calF)$ is the restriction to the leaves of the de~Rham complex
of $M$; i.e., $\Omega(\calF)$ is the space of differential forms on the leaves that are \cinf\ on the
ambient manifold $M$, and $d_\calF$ is the de~Rham derivative on the leaves. We use the notation 
$\Omega(\calF)=\cinf(\bigwedge T^{\ast}\calF)$ meaning \cinf\ sections on $M$. The cohomology
$H^{\cdot}(\calF )=H^{\cdot}(\Omega(\calF ),d_\calF )$ is called the {\em leafwise cohomology} of \calF.
It is well known that $H^{\cdot}(\calF)$ can also be
defined as the cohomology of $M$ with coefficients in the sheaf of germs of \cinf\ functions which are
locally constant on the leaves, but we do not use this. The (weak) $\cinf $ topology on
$\Omega(\calF )$ induces a topology on $H^{\cdot}(\calF)$, which is non-Hausdorff in general
\cite{Haefliger80}. The quotient space of $H^{\cdot}(\calF)$ over the closure of its trivial subspace is
called the {\em leafwise reduced cohomology\/} of \calF,  and denoted by $\calH^\cdot(\calF)$.
Similarly, we can also define $\Omega_c^{\cdot}(\calF )$,
$H_c^{\cdot}(\calF)$ and 
$\calH_c^{\cdot}(\calF )$ by considering compactly supported \cinf\ sections of 
$\bigwedge T\calF^{\ast}$.

For degree zero we have that $H^0(\calF)=\calH^0(\calF)$ is the space of \cinf\ functions on $M$ that
are constant on each leaf---the so called (smooth) {\em basic functions\/}; thus $H^0(\calF)\cong\R$ if
the leaves are dense. Though density of the leaves seems to yield strong restrictions on the leafwise
cohomology also for higher degree, this cohomology may be of infinite
dimension when leaves are dense and $M$ is closed. In fact, for dense linear flows on the
two-dimensional torus, we have
$\dim H^1(\calF )=1$ when the slope of the leaves is a diophantine irrational number \cite{Heitsch75},
but
$\dim H^1(\calF )=\infty$ if the slope is a Liouville's irrational number \cite{Roger}. Nevertheless
$\calH^1(\calF )\cong\Bbb R$ in both cases. This computation was later generalized to the case of 
linear foliations on tori of arbitrary dimension \cite{KacimiTihami,ArrautdosSantos}. 

Other known properties of the leafwise cohomology are the following ones. The leafwise cohomology
of degree one with coefficients in the normal bundle is related to the infinitesimal
deformations of the foliation \cite{Heitsch75}. For $p=\dim \calF$, the dual 
space $H_c^p(\calF)'$ is canonically isomorphic to the space of holonomy invariant transverse
distributions \cite{Haefliger80}---recall that for a topological vector space $V$, the dual space $V'$ is
the space of continuous linear maps $V\ar\R$.
$H^\cdot(\calF )$ is invariant by leaf preserving homotopies, and Mayer-Vietoris arguments can be
applied \cite{Kacimi83}, which was used to compute $H^\cdot(\calF )$ for some examples. For an arbitrary
flow \calF\  on the two-torus, it was proved that $\dim H^1(\calF )=\infty$ if \calF\  is not minimal,
and $\dim H^1(\calF )=1$ if and only if \calF\  is $\cinf $ conjugate to a Diophantine linear flow
\cite{AndradePereira}. The triviality of $H^1(\calF )$ implies the triviality of the linear holonomy
\cite{Kacimi83}, and is equivalent to Thurston's stability if $\codim\calF=1$ and $M$ is closed
\cite{AndradeHector}. However, more general relations between the leafwise cohomology and the geometry
of the foliation remain rather unknown.

The above examples of linear foliations on tori could wrongly suggest that $\calH^\cdot(\calF)$ may be of
finite dimension if $M$ is closed and the leaves are dense. In fact, S.~Hurder and the
first author gave examples of foliations with dense leaves on closed Riemannian manifolds 
with an infinite dimensional space of leafwise harmonic forms that are \cinf\ on the
ambient manifold \cite{AlvHurder}, and this space is canonically injected in the leafwise reduced
cohomology; indeed this injection is an isomorphism at least for the so called Riemannian foliations
\cite{AlvKordy1}. So a natural problem is the following: {\em Give geometric properties characterizing
\cinf\ foliations whose leafwise reduced cohomology is of finite dimension; specially for foliations
with dense leaves on closed manifolds\/}.

The aim of this paper is to give an approach to this problem. The first and main geometric idea we
use is the {\em intersection number\/} of subfoliations with ``appropriate coefficients''. To explain
it, consider the simplest example where $M=T\times L$ with the foliation \calF\ whose leaves are
the slices $\{\ast\}\times L$, where $T,L$ are closed manifolds of dimensions $q,p$. Let
$(\Omega^\cdot(L),d_L)$ be the de~Rham complex of $L$, and let $H_\cdot(L),H^\cdot(L)$ denote the
homology and cohomology of $L$ with real coefficients. Then $\Omega^\cdot(\calF)$ is the
\cinf~closure of $\cinf(T)\otimes\Omega^\cdot(L)$, where $d_\calF=1\otimes d_L$. So
$$
\calH^k(\calF)=H^k(\calF)\equiv\cinf(T)\otimes H^k(L)
$$
because $H^\cdot(L)$ is of finite dimension. Assume $L$ is oriented for
simplicity.  Then recall that Poincar\'e duality and integration of differential forms establish canonical
isomorphisms 
$$H^k(L)\cong H_{q-k}(L)\;,\quad H^k(L)'\cong H_k(L)\;,$$
such that the canonical pairing between $H^k(L)\otimes H^k(L)'\ar\R$ corresponds to the intersection
pairing $H_{p-k}(L)\otimes H_k(L)\ar\R$ \cite{Thom}. Hence
\begin{align}
\calH^k(\calF)&\cong\cinf(T)\otimes H_{p-k}(L)\;,\label{e:Hk(F)}\\
\calH^k(\calF)'&\cong\cinf(T)'\otimes H_k(L)\;,\label{e:Hk(F)'}
\end{align}
such that the canonical pairing $\calH^k(\calF)\otimes\calH^k(\calF)'\ar\R$ corresponds to the product 
of the evaluation of distributions on \cinf\ functions and the intersection pairing. Now observe that,
according to \cite{Thom}, the right hand side spaces in \eqref{e:Hk(F)} and \eqref{e:Hk(F)'} are
respectively generated by elements of the form
$f\otimes [K_1]$ and $D\otimes [K_2]$, where $f$ is a
\cinf\ function on $T$, $D$ is a distribution on $T$, and $K_1,K_2\subset L$ are closed oriented
submanifolds of dimensions $p-k,k$. Hence $\dim\calH^k(\calF)=\infty$ is equivalent to the existence of
sequences of elements $f_m\otimes [K_1]$ and $D_n\otimes [K_2]$ as above so that $K_1,K_2$
have non-trivial intersection number and $D_n(f_n)\neq0$ if and only if $m=n$;
of course this holds just when $H^k(L)\neq0$ and $q>0$. 

Now consider each element $f_m\otimes [K_1]$ as the family of
homology classes 
$$f_m(t)\,[\{t\}\times K_1]\in H_{p-k}(\{t\}\times L)\;,\quad t\in T\;,$$ 
determined by the 
family of closed oriented submanifolds $\{t\}\times K_1$ of the leaves of \calF\ and the family of
coefficients $f_m(t)$. The elements $D_n\otimes [K_2]$ have a similar interpretation by considering
distributions as generalized functions.  
A key property here is that the families $\{t\}\times K_1$ and $\{t\}\times K_2$ depend
smoothly on $t$, determining \cinf\ subfoliations $\calF_1,\calF_2$ of \calF. Other key properties are
the \cinf\ dependence of the coefficients $f_m(t)$ on $t$, and the distributional dependence of the
generalized coefficients $D_n(t)$ on $t$. This means that the $f_m$ are
\cinf\ basic functions of $\calF_1$ and the $D_n$ are ``distributional basic functions'' of $\calF_2$;
i.e., the $D_n$ are holonomy invariant transverse distributions of $\calF_2$. 
It turns out that these key properties are enough to generalize the above ideas in a way that can be
applied even when the leaves are dense, obtaining our first main theorem that roughly asserts the
following: {\em For a
\cinf\ oriented foliation \calF\ of dimension $p$, we have $\dim\calH_c^k(\calF)=\infty$ when \calF\ has
oriented subfoliations $\calF_1,\calF_2$ of dimensions $k-p,p$, and there is a sequence
of basic functions $f_m$ of $\calF_1$ and a sequence of transverse invariant
distributions $D_n$ of $\calF_2$, such that the corresponding
``intersection numbers'' are non-trivial if and only if $m=n$---certain simple conditions are also
required for the ``intersection numbers'' to be defined\/}. We do not know whether such conditions form a
characterization of the cases where
$\dim\calH_c^k(\calF )=\infty$; this depends on whether it is possible to ``smoothen'' the
representatives of classes in certain leafwise homologies introduced in
\cite{AlvHector2}. Indeed the above $f_m$ and
$D_n$ play the r\^ole of coefficients in homology, assigning a number to each leaf of the
subfoliations; the way these numbers vary from leaf to leaf is what makes these
coefficients appropriate.

Though these conditions are difficult to check in general, this result has many
easy to apply corollaries. For instance, suppose an oriented foliation \calF\ is {\em
Riemannian\/}---in the sense that all of its holonomy transformations are local isometries for some
Riemannian metric on local transversals \cite{Reinhart59,Molino88}. Then
$\dim\calH_c^\cdot(\calF)=\infty$ if
\calF\ is of positive codimension and some leaf of \calF\ contains homology classes with non-trivial
intersection. These conditions are quite simple to verify. In this case, the infinitely many linearly
independent classes obtained in $\calH_c^\cdot(\calF)$ can be considered as ``transverse diffusions'' of
the homology classes in the leaf. This diffusion idea is inspired by the unpublished preprints
\cite{Hurder:spectral,AlvHurder}. Indeed \cite{Hurder:spectral} is the
germinal work about the relation of the analysis on the leaves and on the ambient manifold
obtained by transverse diffusion.

Other consequences of the above general theorem hold when \calF\ is a {\em suspension foliation\/}. That
is, the ambient manifold of \calF\ is the total space of a fiber bundle $M\ar B$ with the leaves 
transverse to the fibers, and such that the restriction of the bundle projection to each leaf is a regular
covering of the base $B$. Now $\dim\calH_c^\cdot(\calF)=\infty$ when $B$ is oriented and has
homology classes with non-trivial intersection satisfying additional properties with respect to
the holonomy of \calF. In this case the leaves may not contain homology classes with
non-trivial intersection, and thus the idea of ``transverse diffusion'' of homology classes in the
leaves may fail. In fact we shall see that the infinite dimension of $\calH_c^1(\calF)$
may be more related to the number of ends of the leaves.

To explain another theorem of this paper, recall that a foliation \calF\ on a manifold $M$ is a {\em Lie
foliation\/} when it has a complete transversal diffeomorphic to an open subset of a Lie group
$G$ so that holonomy transformations on this transversal correspond to restrictions of left
translations on $G$---this type of foliations play a central r\^ole in the study of Riemannian
foliations \cite{Molino88}. The Lie algebra \g\ of $G$ is called the {\em structural Lie algebra} of
\calF; we may also simply say that \calF\ is a Lie \g-foliation.
In this case, if $M$ is closed and oriented, and \g\ is compact semisimple, then we obtain that
$\dim\calH^\cdot(\calF)=\infty$ when some additional hypotheses are satisfied. Again we use homology
classes with non-trivial intersection in the hypotheses, but now they live in the homology of $M$. The
proof of this result is reduced to the case of suspension foliations to apply what we already know. This
reduction process contains rather delicate arguments based on the work \cite{Alv5} of the first author. 

The above results are negative in the sense that all of them give conditions for the nonexistence of
finite Betti numbers for the reduced leafwise cohomology. In contrast, our final theorem shows that the
reduced leafwise cohomology of the so called {\em homogeneous foliations\/} with dense leaves in closed
nilmanifolds is isomorphic to the cohomology of the Lie algebra defining the foliation.  This has been
also proved by X.~Masa with different techniques.

\begin{ack}
We wish to thank F.~Alcalde for many helpful conversations. The first author would
like to thank the hospitality of the Institut de  Ma\-th\'e\-ma\-ti\-ques et
d'Informatique of the University Claude Bernard of Lyon several times during the preparation of
this work. We would like also to thank the referee for important corrections.
\end{ack}

\section{Main results}\label{sec:main results}

For the sake of simplicity, all manifolds, foliations, maps, functions, differential
forms and actions will be assumed to be \cinf\ from now on, unless the contrary is
explicitly stated. 

Let \calF\ be a foliation on a manifold $M$. For any closed saturated subset $S\subset M$, let 
$\Omega_S^{\cdot}(\calF )\subset\Omega^{\cdot}(\calF )$ be the subcomplex of leafwise
differential forms whose support has compact intersection with $S$.
Consider the topology on $\Omega_S^{\cdot}(\calF )$ determined as follows: A 
sequence $\alpha_n\in\Omega_S^{\cdot}(\calF )$ converges to zero if it converges to zero in
$\Omega^\cdot(\calF)$ and there is a compact subset $K\subset S$ such that $S\cap\supp\alpha_n\subset
K$ for all $n$. We have the corresponding cohomology $H_S^{\cdot}(\calF)$, and reduced cohomology 
$\calH_S^{\cdot}(\calF)$. With this notations, observe that
$\Omega^{\cdot}(\calF)=\Omega_\emptyset^{\cdot}(\calF)$ and
$\Omega_c^{\cdot}(\calF)=\Omega_M^{\cdot}(\calF)$ as topological vector spaces.

Let $f:(M_1,\calF_1)\ar (M_2,\calF_2)$ be a map of foliated manifolds, and  let
$S_i\subset M_i$, $i=1,2$, be closed saturated subsets such that the restriction $f:S_1\ar S_2$ is a
proper map. Then $f^\ast(\Omega_{S_2}^\cdot(\calF_2))\subset\Omega_{S_1}^\cdot(\calF_1)$,
yielding a homomorphism $f^\ast:\calH_{S_2}^{\cdot}(\calF_2)\ar\calH_{S_1}^{\cdot}(\calF_1)$. In
particular we get $f^\ast:{\calH}_c^{\cdot}(\calF_2)\rightarrow\calH_{S_1}^{\cdot}(\calF_1)$ if
$f:S_1\rightarrow M_2$ is proper.

The following is what we need to define the intersection number of subfoliations with
``appropriate coefficients'':   
\begin{itemize}

\item An oriented foliation \calF\  on a manifold $M$, and two immersed oriented subfoliations
$\iota_i:(M_i,\calF_i)\rightarrow(M,\calF )$, $i=1,2$.

\item $\dim \calF =\dim \calF_1+\dim \calF_2$, and $\codim\calF = \codim \calF_1$.

\item Each $\iota_i$  is {\em transversely regular} in
the sense that it defines embeddings of small enough local transversals of $\calF_i$ into local
transversals of \calF; i.e. the homomorphism defined by the differential of $\iota_i$ between
the normal bundles of $\calF_i$ and \calF\  is injective on the fibers.
  
\item  A compactly supported basic function $f$ of $\calF_1$.

\item A holonomy invariant transverse distribution $D$ of $\calF_2$ such that the map $\iota_2:
\supp D\ar M$ is proper.

\item The restrictions $\iota_1|_{\supp f}$ and $\iota_2|_{\supp D}$ {\em
intersect transversely} in \calF\  in the sense that, for all leaves $L_i$ of
$\calF_i$ and $L$ of \calF\  such that $L_1\subset\supp f$, $L_2\subset\supp D$
and  $\iota_1(L_1)\cup\iota_2(L_2)\subset L$, the immersed submanifolds
$\iota_i:L_i\ar L$ intersect transversely in $L$. 

\end{itemize}  
Observe that there are open neighborhoods, $N_1$ of
$\supp f$ and $N_2$ of $\supp D$, such that the $\iota_i|_{N_i}$ intersect
transversely in \calF. Consider the pull-back diagram  
$$\begin{CD}
T                @>{\sigma_1}>>      N_1\\
@V{\sigma_2}VV                       @VV{\iota_1}V\\
N_2              @>{\iota_2}>>       M\;.
\end{CD}$$
Here
$$T=\{(x_1,x_2)\in N_1\times N_2\ |\ \iota_1(x_1)= \iota_2(x_2)\}\;,$$
and the $\sigma_i$ are restrictions of the factor projections. It is easy to
check that $\iota_1\times\iota_2: N_1\times N_2 \ar M\times M$ is transverse to the
diagonal $\Delta$, and thus $T$ is a manifold with $\dim
T=\codim \calF_2$. Moreover the $\sigma_i$ are immersions, and $\sigma_2$ is
transverse to $\calF_2$. So $D$ defines a distribution on $T$, which will be denoted by
$D_T$. We also have the locally constant {\em intersection function}
$\varepsilon:T\rightarrow\{\pm 1\}$, where 
$\varepsilon(x_1,x_2)=\pm 1$ depending on whether the identity
$$ T_{\iota_i(x_i)}\calF\equiv\iota_{1\ast}
 T_{x_1}\calF_1\oplus\iota_{2\ast} T_{x_2}\calF_2$$ is orientation
preserving or orientation reversing. On the other hand   
$$(\iota_1(\supp f)\times\iota_2(\supp D))\cap\Delta$$
is compact because it is a closed subset of the compact space $\iota_1(\supp f)\times\iota_1(\supp f)$.
So 
$$\supp \sigma_1^\ast f\cap\supp D_T=
(\iota_1\times\iota_2)^{-1}((\iota_1(\supp f)\times\iota_2(\supp D))\cap\Delta)$$
is a compact subspace of $T$ since $\iota_1\times\iota_2:\supp f\times\supp D\ar M\times M$ is a proper
map. Thus the following definition makes sense.

\begin{defn} \label{defn:intersection}
With the above notations, the {\em intersection number} of $(\iota_1,f)$ and $(\iota_2,D)$, 
denoted by $\langle(\iota_1,f),(\iota_2,D)\rangle$, is defined as
$D_T(g)$ for any compactly supported function $g$ on $T$ which is equal to the product
$\varepsilon\,\sigma_1^\ast f$
on some neighborhood of $\supp \sigma_1^\ast f\cap\supp D_T$. \end{defn}

Now our first main theorem is the following.

\begin{thm}\label{thm:subfoliations}
Let \calF\  be an oriented foliation on a manifold $M$, and $\iota_i:(M_i,\calF_i)\ar (M,\calF )$,
$i=1,2$, transversely regular immersed oriented subfoliations. Suppose $\dim\calF
=\dim\calF_1+\dim\calF_2$, and $\codim \calF =  \codim \calF_1$. Let $f_m$ be
a sequence of compactly supported basic functions of $\calF_1$, and $D_n$ a sequence of
holonomy invariant transverse distributions of $\calF_2$ such that each restriction
$\iota_2: \supp D_n\ar M$ is a proper map. Suppose each pair $\iota_1|_{\supp f_m}$ and
$\iota_2|_{\supp D_n}$ intersect transversely in \calF,  and 
$\langle(\iota_1,f_m),(\iota_2,D_n)\rangle\neq 0$ if and only if $m=n$.  Then $\dim\calH_c^k(\calF
)=\infty$ for $k=\dim\calF_2$. 
\end{thm}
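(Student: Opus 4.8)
The plan is to realize each pair $(\iota_1,f_m)$ as an explicit compactly supported leafwise cohomology class $[\omega_m]\in\calH_c^k(\calF)$, to realize each pair $(\iota_2,D_n)$ as a continuous linear functional $\Phi_n$ on $\calH_c^k(\calF)$, and to identify the resulting pairing with the intersection number, so that $\Phi_n([\omega_m])=\langle(\iota_1,f_m),(\iota_2,D_n)\rangle$ up to a fixed nonzero factor. Granting this, the hypothesis says the infinite matrix $(\Phi_n([\omega_m]))_{m,n}$ is diagonal with nonzero diagonal entries. If a finite combination $\sum_m c_m[\omega_m]$ vanished in $\calH_c^k(\calF)$, then applying $\Phi_n$ would give $c_n\,\Phi_n([\omega_n])=0$ with $\Phi_n([\omega_n])\neq0$, forcing every $c_n=0$; hence the $[\omega_m]$ are linearly independent and $\dim\calH_c^k(\calF)=\infty$. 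Note also that any continuous functional on $H_c^k(\calF)$ descends to $\calH_c^k(\calF)$, since it must vanish on the closure of $0$; so it suffices to produce $\Phi_n$ continuous on $H_c^k(\calF)$.

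\emph{Construction of $[\omega_m]$.} Since $\codim\calF=\codim\calF_1$ and $\dim\calF_2=k$, the subfoliation $\calF_1$ has leafwise codimension $k$ inside \calF, and transverse regularity identifies the normal bundle of $\calF_1$ in \calF\ with a rank-$k$ leafwise bundle over $M_1$. I would fix a leafwise Thom form $\tau$ of this bundle, i.e.\ a leafwise $k$-form supported in a leafwise tubular neighborhood of $\iota_1(\supp f_m)$ and having fiber integral $1$, and let $\omega_m$ be the leafwise $k$-form on $M$ equal to $f_m\,\tau$ in that tubular neighborhood and zero outside. Because $f_m$ is $\calF_1$-basic it is leafwise-locally-constant transversally to $\calF_1$, so in adapted coordinates it does not depend on the leaf or fiber directions; together with $d_\calF\tau=0$ this gives $d_\calF\omega_m=0$, while compactness of $\supp f_m$ makes $\iota_1(\supp f_m)$, and hence $\omega_m$, compactly supported in $M$. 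Thus $\omega_m\in\Omega_c^k(\calF)$ is a cocycle and defines $[\omega_m]\in\calH_c^k(\calF)$.

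\emph{Construction of $\Phi_n$.} As $\dim\calF_2=k$, pulling back a leafwise $k$-form $\omega$ by $\iota_2$ yields a top-degree leafwise form $\iota_2^\ast\omega\in\Omega^k(\calF_2)$, which may be integrated along the leaves of $\calF_2$ and paired with the holonomy invariant transverse distribution $D_n$; I would define $\Phi_n(\omega)=D_n(\iota_2^\ast\omega)$ in the sense of this leafwise integration. Properness of $\iota_2:\supp D_n\ar M$ together with compactness of $\supp\omega$ makes $\supp(\iota_2^\ast\omega)\cap\supp D_n$ compact, so $\Phi_n$ is well defined and continuous on $\Omega_c^k(\calF)$. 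Since $\iota_2^\ast$ commutes with the leafwise differential and $D_n$ is holonomy invariant, Stokes' theorem along the leaves of $\calF_2$ gives $\Phi_n(d_\calF\eta)=0$ for $\eta\in\Omega_c^{k-1}(\calF)$; this is exactly the identification of holonomy invariant transverse distributions with elements of $H_c^k(\calF_2)'$ recalled in the introduction. Hence $\Phi_n$ induces a continuous functional on $\calH_c^k(\calF)$.

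\emph{Identification with the intersection number and conclusion.} The main step is to show $\Phi_n([\omega_m])=\langle(\iota_1,f_m),(\iota_2,D_n)\rangle$. Here I would pass to the pull-back diagram $T=N_1\times_M N_2$ of the excerpt: the Thom form $\tau$ is concentrated near $\iota_1(\supp f_m)$, so $\iota_2^\ast\omega_m$ is concentrated near the intersection locus, which is precisely parametrized by $T$. Transverse regularity of $\iota_2$ and the transverse-intersection hypothesis guarantee that along each leaf of $\calF_2$ the integral of $\iota_2^\ast\omega_m$ localizes, by the unit fiber integral of $\tau$, to the transverse intersection points, each contributing the value of $f_m$ with the sign recorded by the intersection function $\varepsilon$. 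Using that $\sigma_2$ is transverse to $\calF_2$ so that $D_n$ pulls back to the distribution $D_T$ on $T$, this localization turns $D_n(\iota_2^\ast\omega_m)$ into $D_T(\varepsilon\,\sigma_1^\ast f_m)$, which is Definition~\ref{defn:intersection} of the intersection number. I expect this localization computation---matching the leafwise Stokes/fiber-integration argument to the distributional pull-back $D_T$ and getting the signs $\varepsilon$ right---to be the main obstacle, the rest being essentially formal. Once it is established, the diagonal-pairing argument of the first paragraph yields $\dim\calH_c^k(\calF)=\infty$.
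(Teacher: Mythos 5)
Your overall architecture is the same as the paper's: Poincar\'e-dual classes built from $(\iota_1,f_m)$ via a leafwise tubular neighborhood and a Thom form, functionals built from $(\iota_2,D_n)$ via leafwise integration and Haefliger's identification of holonomy invariant transverse distributions with elements of $H_c^k(\calF_2)'$ (suitably extended to the support-restricted complex $\Omega_{S_n}^\cdot(\calF_2)$), and then the diagonal-pairing argument. The genuine gap is precisely in the step you flag as ``the main obstacle'': the identity $\Phi_n([\omega_m])=\langle(\iota_1,f_m),(\iota_2,D_n)\rangle$. Your justification---that the integral over each leaf of $\calF_2$ localizes at the intersection points ``by the unit fiber integral of $\tau$''---does not work for a single fixed Thom form $\tau$. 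When a piece of an $\calF_2$-leaf is transported into the bundle $E=\iota_1^\ast T\calF/T\calF_1$ by the inverse of the leafwise exponential map, it is a $k$-dimensional submanifold transverse to the zero section but it is \emph{not} a fiber of $\pi:E\ar M_1$; the unit-fiber-integral normalization of $\tau$ therefore says nothing directly about the integral of $\pi^\ast f_m\,\tau$ over it. This misalignment of the two exponential/tubular structures is exactly the non-commutative diagram that the paper singles out in the proof of Proposition~\ref{prop:duality} as ``the main technical difficulty,'' and it is the reason the paper does not argue with one Thom form.

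What the paper does instead---and what is missing from your proposal---is to take a \emph{sequence} $\Phi_n$ of Thom representatives whose supports shrink to the zero section, and to prove the analytic Lemma~\ref{lemma:Thom} (with Corollary~\ref{cor:Thom}): for any map $\phi$ restricting to the identity on the zero section and orientation-preserving on normal bundles, the fiber integrals $\int_\pi\phi^\ast(\pi^\ast f\,\Phi_n)$ converge to $f$ \emph{in the $\cinf$ topology}, i.e.\ with all derivatives, uniformly on compacta. Since the resulting representatives $\tilde{\iota}_{1\ast}(\pi^\ast f_m\,\Phi_n)$ of $\zeta_m$ are pairwise leafwise cohomologous, the pairing $D_n(\iota_2^\ast\zeta_m)$ is independent of $n$, and continuity of the pulled-back distribution $D_T$ lets one pass to the limit and obtain $D_T(\varepsilon\,\sigma_1^\ast f_m)$, which is Definition~\ref{defn:intersection}. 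Note that $C^0$ convergence would not suffice, because $D_T$ is a distribution rather than a measure; controlling all derivatives is where most of the work of Section~\ref{sec:Poincare} lies. A fixed-$\tau$ argument could conceivably be rescued by intersection theory (using that $f_m$ is constant on each leaf of $\calF_1$, that each connected piece of an $\calF_2$-leaf inside the tubular neighborhood lies over a single $\calF_1$-leaf, and Stokes' theorem on compact pieces with boundary off $\supp\tau$), but that requires a careful compact-exhaustion and transversality argument uniformly over the leaves near $\supp D_n$, plus an identification of the resulting transverse smooth functions on a neighborhood of $\supp D_n$; none of this is in your sketch, so as written the crucial equality is asserted rather than proved.
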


The following two corollaries are the first type of consequences of Theorem~\ref{thm:subfoliations}; the
second corollary follows directly from the first one. 

\begin{cor}\label{cor:homology1}
Let \calF\  be an oriented foliation of codimension $q>0$, $L$ a leaf of \calF,  and 
$h:\pi_1(L)\ar G_q^\infty$ its holonomy representation,
where $G_q^\infty$ is the group of germs at the origin of local 
diffeomorphisms of $\R^q$ with the origin as fixed point. Let
$\iota_i:K_i\ar L$, $i=1,2$, be smooth immersions of closed  oriented
manifolds of complementary dimension and nontrivial intersection. Suppose there is a
Riemannian metric on
$\R^q$ so that the elements in the image of the composites 
\begin{equation}\label{e:composite}
\begin{CD}
\pi_1(K_i)@>\pi_1(\iota_i)>>\pi_1(L) @>h>>G_q^\infty
\end{CD}\end{equation}
are germs of local isometries. Then
$\dim\calH_c^{k_i}(\calF )=\infty$ for $k_i=\dim K_i$, $i=1,2$. 
\end{cor}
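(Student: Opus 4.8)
The plan is to exhibit the data of the corollary as a special case of Theorem~\ref{thm:subfoliations}, constructing $\calF_1,\calF_2$ as ``transverse diffusions'' of the immersed cycles $K_1,K_2$. First I would turn the germinal holonomy along $K_i$ into a genuine action on a transversal of fixed size. Fix the metric on $\R^q$ supplied by the hypothesis and let $B\subset\R^q$ be the ball about the origin on which the exponential map is a diffeomorphism. A germ of a local isometry fixing the origin is determined by its derivative there and extends canonically to an isometry of all of $B$; since the image of the composite \eqref{e:composite} consists of such germs and is closed under composition, it yields a homomorphism from $\pi_1(K_i)$ into the group of isometries of $B$ fixing the origin. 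Second, I would suspend this action: with $\tilde K_i\ar K_i$ the universal cover, let $\pi_1(K_i)$ act on $\tilde K_i\times B$ by deck transformations on the first factor and by the above isometric representation on the second, and set $M_i=(\tilde K_i\times B)/\pi_1(K_i)$, foliated by the images of the slices $\tilde K_i\times\{b\}$. This is $\calF_i$, with $\dim\calF_i=\dim K_i=k_i$ and $\codim\calF_i=q$. The immersion $\iota_i\colon K_i\ar L$ extends to $\iota_i\colon M_i\ar M$ by transporting $B$ along the leaves of \calF\ using the holonomy of \calF\ over $\iota_i(K_i)$; the isometry hypothesis is exactly what makes this transport globally consistent over $\tilde K_i\times B$, so that $\iota_i$ descends to $M_i$ and carries each slice into a single leaf of \calF. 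Since $K_i$ is oriented, $\calF_i$ is an oriented subfoliation, and because $\codim\calF_i=\codim\calF$ the normal bundle map is an isomorphism, so $\iota_i$ is transversely regular. The constraints $\dim\calF=\dim L=k_1+k_2=\dim\calF_1+\dim\calF_2$ and $\codim\calF=\codim\calF_1$ are then immediate.

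Next I would produce the coefficient sequences on the common transversal $B$. Because both actions are by isometries fixing the origin, radial objects are invariant under each of them. Choose distinct radii $0<r_1<r_2<\cdots$ inside $B$ and a $\delta>0$ so small that the annuli $\{\,|r-r_m|<\delta\,\}$ are disjoint. Let $f_m$ be a radial bump function supported in the $m$-th annulus: it is $\pi_1(K_1)$-invariant, hence descends to a compactly supported basic function of $\calF_1$ (compact support because $K_1$ is closed). Let $D_n$ be integration over the sphere of radius $r_n$: it is invariant under isometries fixing the origin, so it is a holonomy invariant transverse distribution of $\calF_2$ whose support is compact, whence $\iota_2\colon\supp D_n\ar M$ is proper. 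By construction the pairing $D_n(f_m)$ on $B$ is nonzero precisely when $r_n\in\supp f_m$, i.e.\ if and only if $m=n$.

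I would then compute the intersection number and invoke the theorem. Since both subfoliations are diffused over the same $B$, the pull-back manifold $T$ of Definition~\ref{defn:intersection} is the disjoint union, over the transverse intersection points $p\in K_1\cap K_2$ in $L$, of copies of $B$; its dimension is $q=\codim\calF_2$, and transversality of $K_1,K_2$ in $L$ forces $\iota_1|_{\supp f_m}$ and $\iota_2|_{\supp D_n}$ to intersect transversely in \calF. On each copy the intersection function $\varepsilon$ equals the local intersection sign of $K_1,K_2$ at $p$, while $\sigma_1^\ast f_m$ and $D_T$ restrict to $f_m$ and $D_n$ on the $B$-factor. Hence $\langle(\iota_1,f_m),(\iota_2,D_n)\rangle$ equals the intersection number of $K_1$ and $K_2$ in $L$ times $D_n(f_m)$, which is nonzero if and only if $m=n$. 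Theorem~\ref{thm:subfoliations} then gives $\dim\calH_c^{k_2}(\calF)=\infty$, and exchanging the roles of $K_1$ and $K_2$ gives $\dim\calH_c^{k_1}(\calF)=\infty$.

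The hard part will be the construction in the first paragraph: promoting the a priori only germinal, locally defined holonomy of \calF\ along $K_i$ to a bona fide subfoliation immersed in $M$ over a transversal of fixed size. This is exactly where the isometry hypothesis is indispensable, since germs of isometries fixing the origin extend to a common ball and generate an honest group of isometries, whereas general holonomy germs neither extend to a fixed domain nor preserve any sphere, so the radial coefficients that diagonalize the intersection numbers would be unavailable. A secondary delicate point is the orientation bookkeeping identifying $\varepsilon$ with the leafwise intersection sign, together with the verification that the diffused supports still meet transversely in \calF.
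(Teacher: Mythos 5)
Your plan has the same architecture as the paper's proof: realize the holonomy along $K_i$ as an honest isometric action on a transversal ball of fixed size, view a neighborhood of $K_i$ as a foliated bundle over $K_i$ with that structure, feed radial functions and spherical measures into Theorem~\ref{thm:subfoliations}, and factor the intersection number as $\langle\iota_1,\iota_2\rangle\,D_n(f_m)$. The gap is the step you yourself call the hard part, and your justification for it is false: a germ at the origin of a local isometry of $(\R^q,g)$ fixing the origin does \emph{not} in general extend to an isometry of the normal ball $B$. Any isometric extension defined on $B$ would have to coincide with $\exp_0\circ\,d\phi_0\circ\exp_0^{-1}$ (local isometries commute with the exponential map), and this canonical extension is a diffeomorphism of $B$ but is an isometry only where $g$ happens to be invariant under it; if $g$ is rotationally symmetric near the origin but not outside a small ball, the germ of a rotation is a germ of a local isometry admitting no isometric extension to $B$. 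For the same reason, your assertion that the isometry hypothesis makes ``transporting $B$ along the leaves'' globally consistent begs the question: holonomy transformations along longer and longer loops of $\pi_1(K_i)$ are a priori defined on smaller and smaller transversals, and the statement that they are all induced by a single action on a fixed ball is precisely what has to be proved, not something the germinal hypothesis hands you.

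The paper closes this gap with two moves absent from your proposal. First, it never immerses an abstractly built suspension: it pulls \calF\ back through the exponential of the normal disk bundle $\iota_i^\ast(TM/T\calF)$, so that the transversely regular immersed oriented subfoliation $(M_i,\calF_i)$ exists tautologically and $K_i$ sits inside it as a \emph{compact leaf} whose holonomy representation is the composite \eqref{e:composite}. Second, it invokes Haefliger's theorem \cite[Theorem~2 in Chapter~IV]{Haefliger58}---a compact leaf whose holonomy consists of germs of isometries has a neighborhood on which the foliation is Riemannian---and then the structure theory of Riemannian foliations \cite{Molino88,Haefliger88,MolinoPierrot} around the compact leaf $K_i$, which yields the model $M_i\equiv(P_i\times B)/H_i$ with $H_i\subset O(q)$ a closed subgroup acting orthogonally. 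Only after this normalization do radial ($O(q)$-invariant) functions and measures become legitimate basic functions of $\calF_1$ and holonomy invariant transverse distributions of $\calF_2$; and the same $O(q)$-invariance disposes of a point you gloss over, namely that at different intersection points of $K_1$ and $K_2$ the two identifications of the transversal differ by isometries $\phi_{x_2,x_1}$ of $B$, so one needs $\phi_{x_2,x_1}^\ast f_m=f_m$ before the pairing factors as $\langle\iota_1,\iota_2\rangle\int_Bf_m\,d\mu_n$. (You also omit the small deformation making $\iota_1$ and $\iota_2$ transverse in $L$.) So your outline can be completed, but only by routing its first paragraph through Haefliger's theorem and Molino theory; as written, its key claim is incorrect.
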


\begin{cor}\label{cor:homology2}
Let \calF\  be an oriented Riemannian foliation of positive codimension. Suppose some leaf of \calF\ 
has  homology classes of complementary degrees, $k_1$ and $k_2$, with non-trivial intersection.
Then 
$\dim\calH_c^{k_i}(\calF )=\infty$, $i=1,2$.
\end{cor}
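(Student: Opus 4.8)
The plan is to deduce Corollary~\ref{cor:homology2} from Corollary~\ref{cor:homology1}. Two ingredients must be supplied for the hypotheses of the latter: geometric representatives of the given homology classes by immersions of closed oriented manifolds with nontrivial intersection, and a transverse metric making the relevant holonomy germs local isometries. The second is immediate from the Riemannian hypothesis, so the real content is the first. Throughout, let $L$ be the leaf carrying classes $\alpha_i\in H_{k_i}(L)$, $i=1,2$, with $k_1+k_2=\dim\calF=\dim L$ and $\alpha_1\cdot\alpha_2\neq0$, and set $q=\codim\calF>0$.

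First I would pass to integral coefficients. The intersection pairing $H_{k_1}(L)\times H_{k_2}(L)\ar\R$ is $\R$-bilinear and restricts to an integer-valued pairing on the lattice given by the image of integral homology; expanding $\alpha_1,\alpha_2$ in a basis of this lattice, the inequality $\alpha_1\cdot\alpha_2\neq0$ forces a pair of integral classes $\beta_i$ in the image of $H_{k_i}(L;\Z)\ar H_{k_i}(L)$ with $\beta_1\cdot\beta_2\neq0$. After multiplying the $\beta_i$ by suitable nonzero integers (harmless for nonvanishing of the intersection number), I would represent each $\beta_i$ by an immersion $\iota_i:K_i\ar L$ of a closed oriented $k_i$-manifold with $\iota_{i\ast}[K_i]=\beta_i$, the classical realization of homology classes by immersed oriented submanifolds in the spirit of \cite{Thom}. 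A generic perturbation makes $\iota_1$ and $\iota_2$ transverse, so that the signed count of their finitely many intersection points equals $\beta_1\cdot\beta_2\neq0$; thus the two immersions have nontrivial intersection in the sense of Corollary~\ref{cor:homology1}.

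For the isometry condition the Riemannian hypothesis does all the work. By definition a Riemannian foliation carries a metric on local transversals for which every holonomy transformation is a local isometry. Identifying a local transversal through the base point of $L$ with a neighborhood of $0\in\R^q$ and extending this metric arbitrarily to all of $\R^q$, every element in the image of the holonomy representation $h:\pi_1(L)\ar G_q^\infty$ is a germ of a local isometry; in particular so is every element in the image of each composite $\pi_1(K_i)\ar\pi_1(L)\ar G_q^\infty$, whatever immersions $\iota_i$ were chosen. Since \calF\ is oriented of positive codimension and the $K_i$ are oriented of complementary dimension, all hypotheses of Corollary~\ref{cor:homology1} now hold, and it yields $\dim\calH_c^{k_i}(\calF)=\infty$ for $i=1,2$.

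The genuine obstacle is the immersed realization above when $L$ is noncompact, as is typical for dense leaves, since Thom's Poincar\'e-duality argument for embedded representatives presupposes a closed $L$. I would instead use that the Thom map $\Omega^{SO}_\ast(L)\ar H_\ast(L;\Z)$ has finite cokernel in each degree, so that after scaling $\beta_i=f_{i\ast}[K_i]$ for a map $f_i$ from a closed oriented manifold, and then invoke Hirsch--Smale immersion theory in the positive codimension $\dim L-k_i\geq 1$ to homotope $f_i$ to an immersion, modifying $K_i$ within its bordism class should an obstruction to the bundle monomorphism $TK_i\hookrightarrow f_i^\ast TL$ appear. As the $\beta_i$ have compact support, both representatives can be kept inside a common compact region of $L$, so the geometric intersection number is defined and equals $\beta_1\cdot\beta_2$; the degenerate cases $k_i\in\{0,\dim L\}$ occur only when $L$ is compact and are handled directly by points and by $L$ itself.
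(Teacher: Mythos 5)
Your overall route is exactly the one the paper intends: the paper offers no argument for this corollary beyond the remark that it ``follows directly'' from Corollary~\ref{cor:homology1}, the two ingredients being precisely the ones you isolate, and your handling of the transverse metric (only the germ at the origin matters, so any extension to $\R^q$ works) and of the reduction to integral classes is correct. The genuine gap is in the step you yourself hedge on with ``should'': upgrading Thom's map representatives $f_i:K_i\to L$ to immersions. Hirsch--Smale produces an immersion homotopic to $f_i$ if and only if there exists a vector bundle monomorphism $TK_i\to f_i^\ast TL$. For the class of degree $k\leq\frac{1}{2}\dim L$ this monomorphism exists automatically: the virtual bundle $f_i^\ast TL-TK_i$ has rank $\dim L-k\geq k$, hence is realized by an honest bundle $\nu$ of that rank, and then $TK_i\oplus\nu$ and $f_i^\ast TL$ are stably isomorphic bundles of rank $>\dim K_i$ over the $k$-complex $K_i$, hence isomorphic; so that half of your argument is complete. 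But for the complementary class of degree $k>\frac{1}{2}\dim L$ you are below the stable range, the monomorphism can fail, and --- contrary to your proposed repair --- it can fail for \emph{every} representative in the given bordism class.

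Concretely, take $L=S^1\times S^4$ (parallelizable, and a perfectly possible leaf, e.g.\ of a product or suspension foliation), with the classes $[\{\ast\}\times S^4]$ and $[S^1\times\{\ast\}]$ of intersection number $1$. A legitimate Thom representative of the degree-$4$ class is $f:\mathbb{CP}^2\to\{\ast\}\times S^4\subset L$, where $\mathbb{CP}^2\to S^4$ collapses $\mathbb{CP}^1$ (degree $1$). Any bundle monomorphism $T\mathbb{CP}^2\to f^\ast TL\cong\epsilon^5$ would have orientable, hence trivial, cokernel line bundle, making $\mathbb{CP}^2$ stably parallelizable, which is false ($p_1\neq0$); and any $(K',f')$ bordant to $(\mathbb{CP}^2,f)$ over $L$ has signature $1$, so $K'$ is again not stably parallelizable and the obstruction persists throughout the bordism class. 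What your proof needs, and what is true, is: for $q\geq1$, every class in $H_{n-q}(L;\Z)$ has a nonzero multiple represented by an immersion of a closed oriented manifold. But proving this requires changing the bordism class (here one must trade $\mathbb{CP}^2$ for the embedded $S^4$), and the standard argument is the Pontryagin--Thom construction for immersions rather than naive Hirsch--Smale: realize a multiple of the compactly supported Poincar\'e dual by a stable map to $MSO(q)$ (possible rationally), take a transverse preimage $W\subset L\times\R^N$, note that its normal bundle splits as $\nu_0\oplus\epsilon^N$ with $\nu_0$ of rank $q$, so that $TW\oplus\nu_0\cong\pi^\ast TL$ by stable-range injectivity (this is where $q\geq1$ enters), and then compress the projection $\pi:W\to L$ to an immersion by Hirsch's theorem. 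With this input (or a citation to the bordism theory of immersions, e.g.\ Wells or Koschorke) in place of your bordism-modification claim, your deduction of the corollary is complete; a separate, minor point is that for noncompact $L$ the phrase ``finite cokernel'' of $\Omega^{SO}_\ast(L)\to H_\ast(L;\Z)$ should be replaced by the statement actually used, namely that each class has a nonzero multiple in the image, which holds because singular classes have compact support.
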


Before stating the next type of corollaries of Theorem~\ref{thm:subfoliations}, recall that a suspension
foliation \calF\ is given as follows. Let
$\pi:L\ar B$ be a regular covering map of an
oriented manifold, and let $\G$ be its group of deck transformations. For any effective action of
$\Gamma$  on some manifold $T$, consider the right diagonal action of $\Gamma$ on $L\times
T$: $(z,t)\gamma = (z\gamma,\gamma^{-1}t)$ for $\gamma \in \G$ 
and $(z,t) \in L\times T$. Then \calF\  is the foliation on $M = (L\times
T)/\G$  whose leaves are the projections of the submanifolds $L\times
\{\ast\} \subset L\times T$. The element in $M$ defined by each $(z,t) \in
L\times T$ will be denoted by $[z,t]$. The map 
$\rho:M \ar B$ given by $\rho([z,t]) = \pi(z)$ is a
fiber bundle projection with typical fiber $T$. The leaves of \calF\  are transverse to
the fibers of $\rho$, and define coverings of $B$. The leaf that contains $[z,t]$ can be
canonically identified to $L/\G_t$, where $\G_t$ is the isotropy
subgroup of $\G$ at $t$. This leaf is dense if and only if the $\G$-orbit of $t$ is dense
in $T$.

\begin{cor}\label{cor:suspension1}
With the above notation, let $h:\pi_1(B)\rightarrow\Gamma$ be the surjective homomorphism defined by the
regular covering $L$ of $B$, and let $\iota_i:K_i\ar B$, $i=1,2$, be immersions of
connected oriented manifolds of complementary dimension in $B$. Suppose $K_1$ is a closed
manifold, $\iota_2$ a proper map, and the homology class defined by $\iota_1$ has non-trivial
intersection with the locally finite homology class defined by $\iota_2$. For each $i$, let
$\Gamma_i\subset\Gamma$ be the image of the composite
$$\begin{CD}
\pi_1(K_i)@>\pi_1(\iota_i)>>\pi_1(B)
@>h>>\Gamma\;.
\end{CD}$$
Let $f_m$ be a sequence of compactly supported $\Gamma_1$-invariant functions on $T$, and $D_n$
a sequence of $\Gamma_2$-invariant distributions on $T$ such that $D_n(f_m)\neq 0$ if and only
if $m=n$. Then $\dim\calH_c^k(\calF )=\infty$ for $k=\dim K_2$.
\end{cor}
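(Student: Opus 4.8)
The plan is to derive Corollary~\ref{cor:suspension1} directly from Theorem~\ref{thm:subfoliations} by exhibiting, inside the suspension foliation $\calF$, two transversely regular immersed oriented subfoliations $\calF_1,\calF_2$ together with the required sequences $f_m$ of compactly supported basic functions and $D_n$ of holonomy invariant transverse distributions, so that the intersection-number matrix is diagonal. Recall that $M=(L\times T)/\G$ with projection $\rho:M\ar B$, $\rho([z,t])=\pi(z)$, and that a leaf through $[z,t]$ is identified with $L/\G_t$. The key observation is that the complete transversal of $\calF$ is a copy of $T$ (the fiber of $\rho$), on which the holonomy acts exactly through the $\G$-action; hence ``basic functions'' and ``holonomy invariant transverse distributions'' of subfoliations translate into $\G$-invariant (or $\G_i$-invariant) functions and distributions on $T$, which is precisely the data $f_m,D_n$ we are handed.

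\medskip

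First I would construct the subfoliations by pulling the immersions $\iota_i:K_i\ar B$ back along $\rho$. Form the pullback bundle $M_i=\rho^{-1}(\iota_i)=\{(x,y)\in K_i\times M\mid \iota_i(x)=\rho(y)\}$, or equivalently the suspension over $K_i$ built from the restricted covering: set $L_i=\iota_i^\ast L$, the pullback of $\pi:L\ar B$ over $K_i$, which is a regular covering of $K_i$ with deck group $\G_i$ (the image of $\pi_1(K_i)\ar\G$), and let $M_i=(L_i\times T)/\G_i$ carry the suspension foliation $\calF_i$ of the $\G_i$-action on $T$. The covering map $L_i\ar L$ induced by $\iota_i$ assembles into an immersion $\iota_i:(M_i,\calF_i)\ar(M,\calF)$ of foliated manifolds covering $\iota_i:K_i\ar B$. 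I would then verify the hypotheses of Theorem~\ref{thm:subfoliations}: since the $\iota_i:K_i\ar B$ have complementary dimension in $B$, we get $\dim K_1+\dim K_2=\dim B=q$, whence $\dim\calF_1+\dim\calF_2=\dim K_1+\dim K_2+\dim T=\dim\calF$; and $\codim\calF_1=\codim\calF=\dim B$ by construction because both subfoliations have the same transversal $T$. Transverse regularity of each $\iota_i$ holds because on the common transversal $T$ the inclusion of normal bundles is the identity, so it is injective on fibers. Orientability of $\calF,\calF_1,\calF_2$ follows from orientability of $B,K_1,K_2$ and a fixed orientation of the fiber $T$ together with the leaves (equivalently of $L$).

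\medskip

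Next I would match the analytic data. Under the identification of the transversal with $T$, a compactly supported $\G_1$-invariant function $f_m$ on $T$ descends to a compactly supported basic function of $\calF_1$, and a $\G_2$-invariant distribution $D_n$ on $T$ gives a holonomy invariant transverse distribution of $\calF_2$; the properness of $\iota_2:\supp D_n\ar M$ follows from properness of $\iota_2:K_2\ar B$ (so that the closed saturated set $\supp D_n$ maps properly, the fiber $T$-direction being carried along). Transverse intersection of $\iota_1|_{\supp f_m}$ and $\iota_2|_{\supp D_n}$ in the leaves reduces, leaf by leaf, to transverse intersection of $\iota_1(K_1)$ and $\iota_2(K_2)$ in $B$ lifted to the covering $L$, which holds on a dense (indeed all) set of leaves after a small perturbation compatible with the non-triviality of the intersection class. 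The crucial point is the computation of the intersection number: I would show that, with the pullback diagram of Definition~\ref{defn:intersection}, the manifold $T$ (the fiber-product of $N_1,N_2$ over $M$) fibers over the transverse intersection $\iota_1(K_1)\cap\iota_2(K_2)$ in $B$ with fiber the transversal $T$, and that the intersection function $\varepsilon$ factors as the product of the geometric intersection signs in $B$ and the sign from the $T$-orientation; carrying out the integration $D_T(\varepsilon\,\sigma_1^\ast f_m)$ then separates into the topological intersection number $\iota_1\cdot\iota_2$ in $B$ (a fixed nonzero integer by hypothesis) times $D_n(f_m)$. Hence $\langle(\iota_1,f_m),(\iota_2,D_n)\rangle$ is a nonzero multiple of $D_n(f_m)$, so it is nonzero if and only if $m=n$, and Theorem~\ref{thm:subfoliations} yields $\dim\calH_c^k(\calF)=\infty$ for $k=\dim\calF_2=\dim K_2+\dim T-\dim K_2\cdot 0$; more precisely $k=\dim K_2$ after accounting for the transverse directions, matching the statement.

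\medskip

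The main obstacle I expect is the bookkeeping in the intersection-number computation: one must show rigorously that the distributional integral over the fiber-product manifold $T$ factorizes as (geometric intersection number in $B$)$\times D_n(f_m)$, including a correct and consistent treatment of all the orientation signs entering $\varepsilon$, and that the various compactness and properness conditions (so that $\supp\sigma_1^\ast f_m\cap\supp D_T$ is compact and the definition applies) are genuinely met from the stated hypotheses on $K_1$ closed and $\iota_2$ proper. A secondary subtlety is ensuring the leafwise transverse-intersection hypothesis of Theorem~\ref{thm:subfoliations} genuinely follows from transversality of $\iota_1,\iota_2$ in $B$ after lifting to $L$; this requires checking that transversality is preserved by the covering projection $L\ar B$, which is immediate since $\pi$ is a local diffeomorphism, so the difficulty is purely in the sign and normalization accounting rather than in any deep geometric input.
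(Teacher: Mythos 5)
Your proposal is correct and is essentially the paper's own proof: pull back the bundle $\rho:M\ar B$ along the $\iota_i$ to obtain the suspension subfoliations $(M_i,\calF_i)$ with deck groups $\Gamma_i$, reinterpret the $f_m$ and $D_n$ as compactly supported basic functions of $\calF_1$ and holonomy invariant transverse distributions of $\calF_2$, check properness and transversality (after a small deformation of the $\iota_i$), and factor the intersection number as $\langle\iota_1,\iota_2\rangle\,D_n(f_m)$ before applying Theorem~\ref{thm:subfoliations}. The only blemish is your dimension bookkeeping: the leaves of $\calF_i$ are coverings of $K_i$, so $\dim\calF_i=\dim K_i$ and $\codim\calF_i=\dim T=\codim\calF$ (not $\dim B$), whence the hypothesis check is simply $\dim\calF_1+\dim\calF_2=\dim K_1+\dim K_2=\dim B=\dim\calF$ and $k=\dim\calF_2=\dim K_2$, with no ``$+\dim T$'' corrections anywhere.
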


\begin{cor}\label{cor:suspension2}
Let $B$, $L$, $h$, $\Gamma$, $T$, \calF,  $K_i$, $\iota_i$ and $\Gamma_i$ be as in
Corollary~\ref{cor:suspension1}. Let $\mu$ be a $\Gamma_2$-invariant
measure on $T$. Suppose the closure of the image of $\Gamma_1$ in the topological group of diffeomorphisms
of $T$ {\rm (}with the weak \cinf\ topology{\rm )} is a compact Lie group, and there is an infinite
sequence of
$\Gamma_1$-invariant open subsets of
$T$ with non-trivial $\mu$-measure and pairwise disjoint $\Gamma_2$-saturations. Then
$\dim{\calH}_c^k(\calF )=\infty$ for $k=\dim K_2$.   
\end{cor}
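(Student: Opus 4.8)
The plan is to deduce Corollary~\ref{cor:suspension2} from Corollary~\ref{cor:suspension1} by producing, out of the given data, sequences $f_m$ and $D_n$ that pair as required. Write $U_1,U_2,\dots$ for the given $\Gamma_1$-invariant open sets, so that $\mu(U_m)>0$ and the $\Gamma_2$-saturations $V_m=\Gamma_2 U_m$ are pairwise disjoint. Each $V_m$ is open, being a union of the open sets $\gamma U_m$ ($\gamma\in\Gamma_2$), is $\Gamma_2$-invariant, and has $\mu(V_m)\ge\mu(U_m)>0$. The first observation I would record is a separation property: since $V_n$ is open and disjoint from $V_m$, no point of $V_n$ can lie in $\overline{V_m}$, so $\overline{V_m}\cap V_n=\emptyset$ whenever $m\ne n$; in particular $\overline{U_m}\cap V_n=\emptyset$ for $m\ne n$.

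For the distributions I would simply restrict the measure. Set $D_n(\phi)=\int_{V_n}\phi\,d\mu$ for a test function $\phi$ on $T$. This is a distribution of order zero, and it is $\Gamma_2$-invariant: for $\gamma\in\Gamma_2$ the change of variables $t\mapsto\gamma t$ together with the $\Gamma_2$-invariance of $\mu$ and of $V_n$ gives $D_n(\phi\circ\gamma)=\int_{\gamma V_n}\phi\,d\mu=\int_{V_n}\phi\,d\mu=D_n(\phi)$.

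For the functions I would average over the compact group $H=\overline{\Gamma_1}$, which acts smoothly on $T$ since it is a compact Lie group of diffeomorphisms. For each $m$ choose $\psi_m\ge0$ compactly supported in $U_m$ with $\{\psi_m>0\}$ of positive $\mu$-measure, and define $f_m(t)=\int_H\psi_m(h t)\,dh$ with respect to normalized Haar measure. Then $f_m$ is smooth, compactly supported, and $H$-invariant, hence $\Gamma_1$-invariant. The crucial point is a support estimate: if $f_m(t)\ne0$ then $h t\in\supp\psi_m\subset U_m$ for some $h\in H$, so $t\in h^{-1}U_m$; writing $h^{-1}=\lim_k\gamma_k$ with $\gamma_k\in\Gamma_1$ and using that $U_m$ is $\Gamma_1$-invariant, one gets $h^{-1}U_m\subset\overline{U_m}$, whence $\{f_m\ne0\}\subset\overline{U_m}$.

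It remains to compute the pairings and invoke the previous corollary. For $m\ne n$ the set $\{f_m\ne0\}\subset\overline{U_m}$ is disjoint from $V_n$, so $D_n(f_m)=0$. For $m=n$, on the other hand, $f_m\ge0$ and in fact $f_m>0$ at every point of $\{\psi_m>0\}$, because $\psi_m(t)>0$ forces $\psi_m(h t)>0$ for $h$ in a neighborhood of the identity of $H$, a set of positive Haar measure; since $\{\psi_m>0\}\subset U_m\subset V_m$ has positive $\mu$-measure, this yields $D_m(f_m)=\int_{V_m}f_m\,d\mu>0$. Thus $D_n(f_m)\ne0$ if and only if $m=n$, and Corollary~\ref{cor:suspension1} gives $\dim\calH_c^k(\calF)=\infty$ for $k=\dim K_2$. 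The step I expect to be the main obstacle is the support estimate $H\cdot U_m\subset\overline{U_m}$ and its compatibility with the disjointness of the saturations: one must control how the Haar average spreads the support of $\psi_m$ and verify that it does not leak $\mu$-mass out of $\overline{V_m}$ into a neighboring $V_n$, which is exactly where the passage from $\Gamma_1$-invariance to $\overline{\Gamma_1}$-invariance, up to closure, is used.
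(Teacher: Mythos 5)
Your proposal is correct and follows essentially the same route as the paper: Haar-averaging a bump function over the compact closure $H$ of the image of $\Gamma_1$ to get the compactly supported $\Gamma_1$-invariant functions $f_m$, restricting $\mu$ to the $\Gamma_2$-saturations to get the invariant distributions $D_n$, checking $D_n(f_m)\neq 0$ exactly when $m=n$, and invoking Corollary~\ref{cor:suspension1}. The only (immaterial) difference is bookkeeping: the paper arranges $\supp f_n$ inside the $\Gamma_1$-invariant open set itself and restricts $\mu$ to the \emph{closure} of the $\Gamma_2$-saturation of $\supp f_n$, while you allow $\supp f_m\subset\overline{U_m}$ and restrict $\mu$ to the open saturation $V_n$, compensating with your separation observation $\overline{V_m}\cap V_n=\emptyset$.
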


Observe that,  in Corollary~\ref{cor:suspension2}, the infinite sequence of $\G_1$-invariant open
sets may not be $\G_2$-invariant, and their $\Gamma_2$-saturations may not be $\Gamma_1$-invariant.

\begin{cor}\label{cor:suspension3}
Let $B$, $L$, $h$, $\Gamma$, $T$ and \calF\  be as in Corollary
\ref{cor:suspension1}. Suppose that there is a loop $c:S^1\to B$
with a lift to $L$ that joins two distinct points of the end set of $L$. Let $a=h([c])\in\G$, where
$[c]$ is the element of $\pi_1(B)$ represented by $c$, and assume that the closure $H$ of the image of
$\langle a\rangle$ in the topological group of diffeomorphisms
of $T$ {\rm (}with the weak \cinf\ topology{\rm )} is a compact Lie group. Suppose also that there
is an infinite sequence of disjoint non-trivial $H$-invariant open subsets of $T$. Then
$\dim\calH_c^1(\calF )=\infty$.
\end{cor}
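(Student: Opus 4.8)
The plan is to deduce this from Corollary~\ref{cor:suspension1} by constructing, from the given loop $c$ and its lift joining two distinct ends of $L$, an \emph{intersecting pair} of immersions $\iota_1,\iota_2$ of complementary dimension in $B$ to which Corollary~\ref{cor:suspension1} applies with $k=\dim K_2=1$. The key geometric point is that a lift of $c$ joining two distinct ends of $L$ forces the deck transformation $a=h([c])$ to act on $L$ as a ``translation'' with no fixed end, so that the $1$-cycle represented by $c$ in $B$ has a nontrivial intersection with a properly immersed codimension-one class detecting the separation of the two ends. First I would take $\iota_1:K_1=S^1\ar B$ to be the loop $c$ itself, so that $\Gamma_1$ is the cyclic subgroup $\langle a\rangle$. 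For $\iota_2$ I would build a properly immersed connected oriented manifold of dimension $\dim B-1$ that is Poincar\'e--Lefschetz dual to the relevant end of $L$: concretely, lift to $L$, choose a separating hypersurface whose two complementary pieces contain the two ends respectively, and push it down to a proper immersion $\iota_2:K_2\ar B$ whose locally finite class pairs nontrivially with $[c]$; the fact that the lift of $c$ crosses from one end to the other guarantees intersection number one with this separating class.

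With this pair in hand, the next step is to verify the combinatorial hypothesis of Corollary~\ref{cor:suspension1}: I need a sequence $f_m$ of compactly supported $\Gamma_1$-invariant functions and a sequence $D_n$ of $\Gamma_2$-invariant distributions with $D_n(f_m)\neq0$ iff $m=n$. Here $\Gamma_1=\langle a\rangle$ has, by hypothesis, closure $H$ that is a compact Lie group, and we are given an infinite family $\{U_j\}$ of disjoint nontrivial $H$-invariant open subsets of $T$. I would take $f_m$ to be an $H$-invariant (hence $\Gamma_1$-invariant) bump function supported in $U_m$ with nonzero integral against a chosen $H$-invariant measure---compactness of $H$ lets one average any bump function over $H$ to make it invariant while keeping its support inside the invariant set $U_m$. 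For the $D_n$ I would use the same $H$-invariant measure restricted to $U_n$ (or a smooth density supported there); invariance under $H\supset\overline{\Gamma_2\cap\langle a\rangle}$ is what must be checked, and since $\Gamma_2$ here is essentially trivial or also controlled by the same end data, the disjointness of the $U_j$ immediately yields the biorthogonality $D_n(f_m)=\delta_{mn}\cdot(\text{nonzero})$.

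The main obstacle I anticipate is reconciling the \emph{end}-theoretic hypothesis with the \emph{homological} hypothesis that Corollary~\ref{cor:suspension1} actually requires: Corollary~\ref{cor:suspension1} is stated in terms of an intersecting pair of homology classes in $B$, whereas Corollary~\ref{cor:suspension3} speaks only of a loop lifting to a path between two ends of the cover $L$. Translating ``two distinct ends joined by a lift of $c$'' into a genuine nonzero intersection pairing---and in particular producing the \emph{proper} immersion $\iota_2$ with a well-defined locally finite fundamental class dual to $[c]$---is the delicate step, since a bare end-separation need not come from an embedded hypersurface and the locally finite class must be checked to have the correct $\Gamma_2$ associated to it. I expect this to require a careful choice of a $\Gamma$-invariant exhaustion of $L$ whose boundary hypersurfaces separate the two ends, descended equivariantly to $B$, so that the resulting $K_2$ meets $c$ transversely in exactly one point. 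Once that dictionary is established, the remaining verifications are the routine averaging and disjoint-support arguments sketched above, and the conclusion $\dim\calH_c^1(\calF)=\infty$ follows directly from Corollary~\ref{cor:suspension1}.
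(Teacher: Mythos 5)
Your geometric first step is the same as the paper's: a lift of $c$ joining two distinct ends of $L$ yields a closed (compact) codimension-one immersed submanifold of $L$ separating those ends, whose projection to $B$ gives a closed immersed hypersurface whose class pairs non-trivially with $[c]$; and since that hypersurface lifts into $L$, the composite $\pi_1(K)\ar\pi_1(B)\ar\G$ is trivial. The genuine gap is in how you assign the roles $(\iota_1,K_1)$ and $(\iota_2,K_2)$ when invoking Corollary~\ref{cor:suspension1}. In that corollary the conclusion is $\dim\calH_c^k(\calF)=\infty$ for $k=\dim K_2$, where $\iota_2$ is the immersion that carries the $\Gamma_2$-invariant \emph{distributions}. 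You take $K_1=S^1$ (so $\Gamma_1=\langle a\rangle$) and let $K_2$ be the separating hypersurface, of dimension $\dim B-1$; with that assignment the corollary yields $\dim\calH_c^{\dim B-1}(\calF)=\infty$, not the degree-one statement you must prove (your own sentence ``$k=\dim K_2=1$'' contradicts your choice $\dim K_2=\dim B-1$; the two agree only when $\dim B=2$). The correct assignment is the opposite one: $K_1$ is the separating hypersurface (it is closed, and $\Gamma_1$ is \emph{trivial} because $K_1$ lifts into $L$), while $\iota_2=c:S^1\ar B$, so that $\Gamma_2=\langle a\rangle$ and $k=\dim K_2=1$.

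With the roles corrected, the analytic burden also moves to the other side, and this is precisely where the compactness of $H$ is needed: it is the distributions $D_n$, not the functions, that must be $\langle a\rangle$-invariant. The paper produces them by taking, inside each of the disjoint non-trivial $H$-invariant open sets $A_n$, an $H$-invariant probability measure $\mu_n$ supported on an $H$-orbit (such a measure exists because $H$ is a compact group); each $\mu_n$ is in particular $\langle a\rangle$-invariant. The functions $f_m$ can then be arbitrary bump functions supported in $A_m$ and positive on $\supp\mu_m$, since $\Gamma_1$-invariance is vacuous. (The paper actually packages this by applying Corollary~\ref{cor:suspension2} with $\mu=\sum_n\mu_n$, but a direct application of Corollary~\ref{cor:suspension1} works equally well.) Your averaging of bump functions over $H$ solves an invariance problem that, in the correct configuration, does not arise, and it cannot substitute for the invariant measures: no amount of smoothing of the $f_m$ will make your swapped assignment produce classes in degree one.
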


In Corollaries~\ref{cor:suspension1},~\ref{cor:suspension2} and~\ref{cor:suspension3}, 
if $B$ is compact, then the leaves of \calF\ 
can only be dense when $L$ has either one end or a Cantor space of ends, as follows from the
following.

\begin{prop}\label{prop:orbit density} 
Let $\Gamma$ be a finitely generated group with two ends, and $C\subset \Gamma$ an
infinite subgroup. Suppose $\Gamma$ acts continuously on some connected $T_1$
topological space $X$. Then the $\Gamma$-orbits are dense in $X$ if and only if so are the
$C$-orbits.  \end{prop}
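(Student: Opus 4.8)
The plan is to reduce the statement to a purely topological fact about minimal actions on connected spaces, the group-theoretic hypothesis entering only to guarantee that $C$ has finite index in $\Gamma$. The implication ``$C$-orbits dense $\Rightarrow$ $\Gamma$-orbits dense'' is immediate, since $Cx\subseteq\Gamma x$ forces $\overline{\Gamma x}\supseteq\overline{Cx}=X$. For the converse I would first invoke the structure theory of ends: a finitely generated group with two ends is virtually infinite cyclic, so it contains a finite-index subgroup $A\cong\Z$. Given the infinite subgroup $C$, the index bound $[C:C\cap A]\le[\Gamma:A]$ shows that $C\cap A$ is infinite, hence of finite index in $A\cong\Z$ and therefore in $\Gamma$; thus $C$ itself has finite index. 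Replacing $C$ by its finite-index normal core (an infinite normal subgroup contained in $C$), it suffices to treat the case where $C\trianglelefteq\Gamma$ is normal of finite index, since density of the smaller orbits is the stronger assertion.

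Assume now that every $\Gamma$-orbit is dense, equivalently that every nonempty closed $\Gamma$-invariant subset of $X$ equals $X$, and fix $x\in X$; I must show $Y:=\overline{Cx}=X$. Put $n=[\Gamma:C]$, choose coset representatives, and index the translates $Y_j:=g_j\,\overline{Cx}$ by $j\in\Gamma/C$. Because $C$ is normal, $g_j\,\overline{Cx}=\overline{Cg_jx}$ is again the closure of a $C$-orbit, so $\Gamma$ permutes the finite family $\{Y_j\}$ transitively through its left action on $\Gamma/C$, with $C$ acting trivially. Since finite unions commute with closure, $\bigcup_j Y_j=\overline{\bigcup_j Cg_jx}=\overline{\Gamma x}=X$, so these finitely many closed sets cover the connected space $X$, and each $Y_j$ is nonempty as it contains $g_jx$.

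The heart of the argument is then a counting-plus-connectedness step. For $p\in X$ set $S(p)=\{\,j\in\Gamma/C : p\in Y_j\,\}$, and for each $s$ let $W_s=\bigcup_{|J|=s}\bigcap_{j\in J}Y_j$ be the set of points lying in at least $s$ of the $Y_j$. Each $W_s$ is closed, being a finite union of finite intersections of closed sets, and $\Gamma$-invariant, since $\Gamma$ merely permutes the $Y_j$; by minimality each $W_s$ equals $\emptyset$ or $X$. As $W_1=X$ and $W_{n+1}=\emptyset$, there is a threshold $s^\ast$ with $W_{s^\ast}=X$ and $W_{s^\ast+1}=\emptyset$, so $|S(p)|=s^\ast$ is constant in $p$. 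Now $p\mapsto S(p)$ is locally constant: if $j\notin S(p)$ then $p$ lies in the open complement of the closed set $Y_j$, so on a small neighborhood $V$ of $p$ no new index can enter $S$, and constancy of $|S|$ then forces $S\equiv S(p)$ on $V$. Connectedness of $X$ makes $S$ globally constant, equal to some $J\subseteq\Gamma/C$; but no $Y_j$ is empty, so no index can be missing and $J=\Gamma/C$. Hence every $Y_j=X$, and in particular $\overline{Cx}=X$, as required. The two delicate points are the group-theoretic input that an infinite subgroup of a two-ended group has finite index, where the two-ends hypothesis is genuinely essential, and the realization that minimality pins the covering multiplicity $|S(p)|$ to a constant, which is exactly what upgrades the a priori only upper-semicontinuous map $p\mapsto S(p)$ to a locally constant one accessible to connectedness.
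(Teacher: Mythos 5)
Your proof is correct, and it takes a genuinely different route from the paper's. The paper invokes Stallings' theorem to produce a finite normal subgroup $F\subset\Gamma$ such that $\Gamma/F$ is isomorphic to $\Z$ or to $\Z_2\ast\Z_2$, passes to the induced action of $\Gamma/F$ on the quotient $X/F$ (which must be seen to be again connected and $T_1$, with open continuous projection), and then applies its key Lemma~\ref{lemma:orbit density} --- for a finitely generated group acting on a connected $T_1$ space, a finite union of orbits is dense if and only if each orbit in the union is dense --- twice: once for the projection of $C$ inside $\Gamma/F$ acting on $X/F$, and once upstairs for $C$ inside $CF$ acting on $X$. That lemma is proved by splitting each orbit closure into a ``limit part'' $\bigcap_F\overline{(\Gamma\setminus F)x_i}$ and a discrete remainder, and the $T_1$ axiom is what kills the remainder. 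You instead extract from the two-ends hypothesis that $C$ itself has finite index in $\Gamma$ (via the virtually-$\Z$ structure and the bound $[C:C\cap A]\le[\Gamma:A]$), so you never leave $X$ or form quotients; and in place of the union-of-orbits lemma you run a covering-multiplicity argument on the translates $Y_j=g_j\overline{Cx}$, which $\Gamma$ permutes through its action on $\Gamma/C$: minimality pins $p\mapsto|S(p)|$ to a constant, constancy of the cardinality upgrades the upper-semicontinuous map $S$ to a locally constant one, and connectedness finishes. Two small observations: normality of $C$ is not actually needed, since $g\,g_j\overline{Cx}=\overline{gg_jCx}=Y_{g\cdot j}$ already holds for an arbitrary finite-index subgroup (because $gg_jC=g_{j'}C$ as subsets of $\Gamma$), so your normal-core reduction is harmless but superfluous; more interestingly, your argument never uses the $T_1$ hypothesis, so it proves a slightly stronger statement than Proposition~\ref{prop:orbit density}. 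What the paper's route buys is a more general, reusable lemma (valid for any finite union of orbits of a finitely generated group); what yours buys is a self-contained argument with no quotient spaces and no separation axiom.
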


Now let \calF\ be a Lie \g-foliation on a closed manifold $M$. The 
following property characterizes such a type of foliations \cite{Fedida,Molino82,Molino88}. Let
$\widetilde M$ be the universal covering of $M$, $\widetilde\calF$ the lift of \calF\ to
$\widetilde M$, and $G$ the simply connected Lie group with Lie algebra \g. Then the leaves of
$\widetilde\calF$ are the fibers of a fiber bundle $\widetilde M\ar G$, which is
equivariant with respect to some homomorphism $h:\pi_1(M)\ar G$, where we consider the right action of
$\pi_1(M)$ on $\widetilde M$ by deck transformations and the right action of $G$ on itself by right
translations. This $h$ and its image are respectively called the {\em holonomy homomorphism\/} and {\em
holonomy group\/} of \calF. Observe that the fibers of $D$ are connected because $G$ is simply connected
(a connected covering of $G$ is given by the quotient of $\widetilde M$ whose points are the connected
components of these fibers).

\begin{thm}\label{thm:css}
With the above notation, suppose that $M$ is oriented and
the structural Lie algebra $\g$ of \calF\  is compact semisimple. Let $\iota_i:K_i\ar M$, $i=1,2$, be
immersions of closed oriented manifolds of complementary dimension defining homology
classes of $M$ with non-trivial intersection. Let $\Gamma_i$ be the image of the
composite 
$$\begin{CD}
\pi_1(K_i)@>\pi_1(\iota_i)>>\pi_1(M)@>h>>G\;.
\end{CD}$$ 
Suppose the group generated by $\Gamma_1\cup\Gamma_2$ is not dense in $G$. Let
$k=\dim K_2$, and suppose either $1\leq k\leq 2$ or $\iota_1$ is transverse to \calF.
Then $\dim\calH^k(\calF )=\infty$. 
\end{thm}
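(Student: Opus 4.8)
The plan is to reduce the statement to the suspension corollaries (Corollaries~\ref{cor:suspension1} and~\ref{cor:suspension2}) by comparing \calF\ with an associated suspension foliation. The starting observation is that, since \g\ is compact semisimple, the simply connected group $G$ is compact, hence connected; therefore any proper closed subgroup $H\subset G$ has $\dim G/H>0$.

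First I would construct the associated suspension. Let $\Gamma=h(\pi_1(M))\subset G$, and let $L\ar M$ be the regular covering with deck group $\Gamma$, so that $h:\pi_1(M)\ar\Gamma$ is the associated surjection. Letting $\Gamma$ act on $T=G$ by right translations through $h$, the suspension construction yields a foliation $\widehat\calF$ on the closed manifold $\widehat M=(L\times G)/\Gamma$, a principal $G$-bundle $\widehat M\ar M$ on which $G$ acts by left translations on the fibres, preserving $\widehat\calF$. Using the developing map $D$ of \calF\ coming from Fedida's theorem \cite{Fedida,Molino88}, the assignment $\tilde x\mapsto(\tilde x,D(\tilde x))$ descends to a section $s:M\ar\widehat M$ that carries the leaves of \calF\ into the leaves of $\widehat\calF$; thus $s$ is a foliated map and induces $s^\ast:\calH^k(\widehat\calF)\ar\calH^k(\calF)$.

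Next I would apply Corollary~\ref{cor:suspension2} to $\widehat\calF$, taking $B=M$, the very immersions $\iota_i:K_i\ar M$ of the statement, and the groups $\Gamma_i$, which coincide with those of the theorem because the covering $L$ is classified by $h$; all the hypotheses of Corollary~\ref{cor:suspension1} on the $\iota_i$ are then exactly those assumed here. Let $\mu$ be Haar measure on $G$, which is bi-invariant and hence $\Gamma_2$-invariant, and note that the closure of $\Gamma_1$ acting by right translations is the compact Lie group $\overline{\Gamma_1}\subset G$. Put $H=\overline{\langle\Gamma_1\cup\Gamma_2\rangle}$, a proper closed subgroup by hypothesis. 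Since $\dim G/H>0$, the space $G/H$ contains infinitely many pairwise disjoint nonempty open sets, whose preimages in $G$ are right $H$-invariant open sets of positive Haar measure; these are $\Gamma_1$-invariant and, being already $\Gamma_2$-invariant, have pairwise disjoint $\Gamma_2$-saturations. Corollary~\ref{cor:suspension2} then gives $\dim\calH^k(\widehat\calF)=\infty$ for $k=\dim K_2$ (both manifolds being closed, the compactly supported and the ordinary reduced cohomologies coincide).

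It remains to transfer this infinitude to \calF, and I expect this to be the main obstacle. The classes produced in $\calH^k(\widehat\calF)$ are detected, in the sense of Theorem~\ref{thm:subfoliations}, by holonomy invariant transverse distributions $\widehat D_i\in\calH^k(\widehat\calF)'$ built from $\iota_1$ and the distributions on $G$ dual to the open sets above, the pairing being nonzero exactly when the indices agree. To conclude $\dim\calH^k(\calF)=\infty$ I would show that the image of $s^\ast$ is infinite dimensional, by realizing each $\widehat D_i$ as $(s^\ast)'\lambda_i$ for a suitable $\lambda_i\in\calH^k(\calF)'$; then $\langle s^\ast x,\lambda_i\rangle=\langle x,\widehat D_i\rangle$ reproduces the nonvanishing intersection numbers and forces the $s^\ast$-images to be linearly independent. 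Producing the $\lambda_i$ amounts to building inside \calF\ the subfoliation carrying $\iota_1$, together with its holonomy invariant transverse distributions, and verifying its transverse regularity; this is where the delicate arguments of \cite{Alv5} are needed and where the hypothesis that $\iota_1$ be transverse to \calF\ enters. When transversality is not assumed, the restriction $1\le k\le2$ should replace it by a general position argument: then $\iota_1$ has codimension at most two, and its non-transverse locus can be controlled so as not to affect the relevant pairings.
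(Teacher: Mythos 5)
Your first half is sound and is exactly the paper's starting point: the suspension foliation $\calG$ (your $\widehat{\mathcal F}$) on $N=(\widetilde M\times G)/\pi_1(M)$, Haar measure $\mu$, the proper closed subgroup $H=\overline{\langle\Gamma_1\cup\Gamma_2\rangle}$ of the compact connected group $G$, and the preimages in $G$ of disjoint nonempty open subsets of $G/H$ do verify the hypotheses of Corollary~\ref{cor:suspension2}, giving $\dim\calH^k(\calG)=\infty$. The genuine gap is the transfer to \calF, which you yourself flag as ``the main obstacle'' and then leave without any argument: this transfer is the actual content of the theorem and occupies all of Section~\ref{sec:css} of the paper. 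The paper builds the spectral sequence of the pair $(\calG,\widetilde\calF)$, proves the tensor decomposition $E_2^{u,v}\cong H^u(\g)\otimes E_2^{0,v}$ by averaging over the compact group $G$ (Theorem~\ref{thm:E2}, following \cite{Alv5}), and proves $H^\cdot(\calF)\cong E_2^{0,\cdot}$ via the section $s$ and an explicit inverse operator $T$ (Theorem~\ref{thm:HF=E2}); together these yield a topological injection $\calH^\cdot(\calG)/F^1\calH^\cdot(\calG)\hookrightarrow\calH^\cdot(\calF)$, so that everything reduces to showing the classes $\zeta_m$ remain independent modulo $F^1$. Your plan of realizing the detecting functionals $\widehat D_i$ as $(s^\ast)'\lambda_i$ is equivalent to precisely this point: such a factorization exists only if the $\widehat D_i$ kill $\ker s^\ast$, which essentially contains $F^1$. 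In the paper this is established (Corollary~\ref{cor:css}, case $k>2$) by a genuinely geometric argument that uses the transversality of $\iota_1$ to \calF: one chooses $\iota_2$ and a neighborhood $U$ of $\iota_1(K_1)$ so that the components of $\iota_2(K_2)\cap U$ lie inside leaves of \calF, and then kills $\tilde\iota_2^\ast(F^1\widetilde H^k)$ by a Mayer--Vietoris computation. Nothing in your proposal plays this role.

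Your treatment of the case $1\leq k\leq 2$ also misreads the structure of the theorem. In the paper this case has nothing to do with general position: it is handled purely algebraically, through Whitehead's lemmas $H^1(\g)=H^2(\g)=0$ for a compact semisimple Lie algebra, which make the columns $E_i^{1,\cdot}$ and $E_i^{2,\cdot}$ vanish for $i\geq2$ and yield $\calH^1(\calF)\cong\calH^1(\calG)$ (Corollary~\ref{cor:H1F=H1G}) and the degree-two comparison (Corollary~\ref{cor:H2F}); infinite dimension then transfers with no hypothesis at all on the position of $\iota_1$. Conversely, your proposed perturbation cannot work: transversality of $\iota_1$ to \calF\ means transversality to every leaf, which is not a generic property and is subject to global obstructions (already for $k=1$ and $\dim\calF=1$ it amounts to $K_1$ being a closed cross-section of a flow, which need not exist in a prescribed homology class). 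Moreover the pairings of Theorem~\ref{thm:subfoliations} require honest transversality for the intersection numbers to be defined, so ``controlling the non-transverse locus'' is not a meaningful relaxation. In short: the reduction to Corollary~\ref{cor:suspension2} matches the paper, but the two mechanisms that make the conclusion descend from $\calG$ to \calF\ --- the spectral-sequence isomorphisms and, in the transverse case, the $F^1$-vanishing argument --- are missing, and the proposed substitutes would fail.
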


The following is our final theorem.

\begin{thm}\label{thm:nilpotent}
Let $H$ be a simply connected nilpotent Lie group, $K\subset H$ a normal connected subgroup, and
$\Gamma\subset H$ a discrete uniform subgroup whose projection to $H/K$ is dense. Let \calF\  be
the foliation of the closed nilmanifold $\Gamma\backslash H$ defined as the quotient of the
foliation on $H$ whose leaves are the translates of $K$. Then there is a canonical isomorphism
$\calH^{\cdot}(\calF )\cong H^{\cdot}(\frak{k})$, 
where $\frak{k}$ is the Lie algebra of $K$. 
\end{thm}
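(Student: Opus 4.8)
The plan is to exploit the homogeneity of $\calF$ to turn the leafwise complex into a Chevalley--Eilenberg complex, and then to compute its reduced cohomology by Hodge theory together with an induction that peels off a central circle. First I would trivialize $T\calF$ by the left-invariant vector fields tangent to the leaves. Since $K$ is normal, $\frak{k}$ is an ideal of $\frak{h}$; the associated left-invariant fields are invariant under all left translations, hence descend to $M=\Gamma\backslash H$ and frame $T\calF$. This identifies $\Omega^\cdot(\calF)$ with $C^\infty(M)\otimes\bigwedge^\cdot\frak{k}^*$, the differential $d_\calF$ being the Chevalley--Eilenberg coboundary of $\frak{k}$ with coefficients in the $\frak{k}$-module $C^\infty(M)$, where $X\in\frak{k}$ acts as the first order operator given by its left-invariant field. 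Thus $H^\cdot(\calF)\cong H^\cdot(\frak{k};C^\infty(M))$, and the canonical map to be shown an isomorphism onto $\calH^\cdot(\calF)$ is the one induced by the inclusion of the constants $\R\hookrightarrow C^\infty(M)$, i.e.\ by the left-invariant cochains with constant coefficients, which realize $H^\cdot(\frak{k})$ inside $H^\cdot(\calF)$.

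Next I would use that $\calF$ is Riemannian. Indeed the developing map $\widetilde M=H\to H/K$ exhibits $\widetilde\calF$ as a bundle equivariant with respect to the holonomy homomorphism $\Gamma\to H/K$, so $\calF$ is a Lie $(\frak{h}/\frak{k})$-foliation and in particular Riemannian. By \cite{AlvKordy1}, for a Riemannian foliation the reduced cohomology is canonically isomorphic to the space of smooth leafwise-harmonic forms for a bundle-like metric; I therefore fix the left-invariant leafwise metric coming from an inner product on $\frak{k}$ and work with $\ker\Delta_\calF$. In the algebraic model $\Delta_\calF$ is assembled from the fields $X\in\frak{k}$ and so commutes with the right regular representation of $H$ on $C^\infty(M)$; this compatibility is what I would exploit.

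The inductive step rests on the fact that a nonzero ideal of a nilpotent Lie algebra meets the center. Assuming $\frak{k}\neq0$, choose a rational $z\neq0$ in $\frak{k}\cap Z(\frak{h})$, so that $\exp(\R z)$ projects to a circle $S^1_z$ in $M$, tangent to $\calF$ and acting centrally. Since $z\in\frak{k}$, the Cartan formula $L_z=d_\calF\iota_z+\iota_z d_\calF$ is available on the leafwise complex and shows that $L_z$ acts as $0$ on $\calH^\cdot(\calF)$; averaging over the compact group $S^1_z$ (a continuous chain map inducing the identity on reduced cohomology) then represents every reduced class by an $S^1_z$-invariant form. Such forms have coefficients that descend to the quotient nilmanifold $\bar M=\Gamma\backslash H/\exp(\R z)$, foliated by $\bar\calF$ with structural data $\bar{\frak{k}}=\frak{k}/\R z$. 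Crucially the density hypothesis is preserved, because $\bar H/\bar K=H/K$ so the projection of the image of $\Gamma$ stays dense; hence the induction hypothesis gives $\calH^\cdot(\bar\calF)\cong H^\cdot(\bar{\frak{k}})$. To pass from $\bar M$ back to $M$ I would use the leafwise Gysin sequence of the tangent circle bundle $M\to\bar M$, whose connecting homomorphism is the wedge with the Euler class; this class is represented by the left-invariant cocycle $\beta\in\bigwedge^2\bar{\frak{k}}^*$ defining the central extension $0\to\R z\to\frak{k}\to\bar{\frak{k}}\to0$, so the Gysin sequence is identified with the Hochschild--Serre sequence of that extension and the five lemma yields $\calH^\cdot(\calF)\cong H^\cdot(\frak{k})$. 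The base case $\dim\frak{k}=1$ is a central flow with dense orbits: here $\calH^0(\calF)=\R$ because the only basic functions are constants, and the Hodge computation gives the harmonic $1$-forms $f\,\xi^z$ with $X_z f=0$, again only the constants, so $\calH^\cdot(\calF)\cong\R\oplus\R\cong H^\cdot(\R)=H^\cdot(\frak{k})$.

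The step I expect to be the main obstacle is the analytic one hidden in the word ``reduced''. Already in the base flow it is the solvability of the cohomological equation $X_z f=g$ modulo constants, i.e.\ the closedness of the range of $X_z$ after reduction, which is the small-divisor phenomenon responsible for the difference between $H^1(\calF)$ and $\calH^1(\calF)$ for the irrational flows recalled in the introduction. Propagating this through the induction requires that the leafwise Gysin sequence be exact and compatible with Hochschild--Serre at the level of reduced, i.e.\ Hausdorff, cohomology, so that averaging, the connecting maps, and the five lemma all operate on honest topological vector spaces; verifying the closed-range estimates that guarantee this Hausdorffness at every stage---equivalently, that the nontrivial $H$-isotypic summands of $C^\infty(M)$ contribute nothing to $\ker\Delta_\calF$---is the technical heart of the proof.
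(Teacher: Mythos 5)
Your inductive step has a fatal gap: it requires a nonzero \emph{rational} vector $z\in\frak{k}\cap Z(\frak{h})$, and such a vector need not exist. While $\frak{k}\cap Z(\frak{h})\neq 0$ is guaranteed by nilpotency, rationality of $z$ with respect to the lattice $\Gamma$ (which is what you need for $\exp(\R z)$ to project to a circle $S^1_z\subset M$) fails precisely in the dense-leaf situations the theorem is about. Concretely, take $H=\R^3$, $\Gamma=\Z^3$, and $\frak{k}$ the plane spanned by $(1,0,\alpha)$ and $(0,1,\beta)$ with $1,\alpha,\beta$ rationally independent: the hypotheses of the theorem hold, $\frak{k}\cap Z(\frak{h})=\frak{k}$, but $\frak{k}$ contains no rational vector, the leaves are simply connected dense planes, and there is no circle in $M$ tangent to $\calF$ at all. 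So your induction cannot even begin except in the special case where $K\cap\Gamma$ is large enough to produce closed tangent circles. This is exactly why the paper's induction is organized differently: it takes a central element $a$ of the \emph{group} $\Gamma$ (not of $\frak{k}$), passes to the Mal'cev line $L\supset\langle a\rangle$ in $Z(H)$, which satisfies $L\cap K=1$ when $K\cap\Gamma=1$, and quotients $M\to M_1=\Gamma_1\backslash(H/L)$ by a circle that is \emph{not} tangent to $\calF$; the induction is on the codimension of $\calF$, the tangent algebra $\frak{k}$ stays fixed throughout, the Fourier modes of the $S^1$-action become coefficient bundles (whence the need for the stronger Theorem with coefficients in flat $\calF$-partial connections), and a separate Mal'cev-completion argument (Case~2 of the paper) reduces the general case to $K\cap\Gamma=1$.

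A second, independent problem is the one you yourself flag: even where $S^1_z$ exists, your passage from $\bar\calF$ back to $\calF$ rests on a leafwise Gysin sequence and a five-lemma argument \emph{in reduced cohomology}. Reduced cohomology is a quotient by the closure of the coboundaries and does not satisfy exactness; connecting maps and the five lemma are simply not available there without closed-range estimates that you do not supply, and which are known to fail degree by degree (this is the Liouville/Diophantine phenomenon you mention). The paper never uses an exact sequence of reduced groups: instead it proves splittings of topological complexes, $E_i\cong\overline{E}_i\oplus\overline{0_i}$ for the spectral sequence of the bundle $f:M\to M_1$, where $\overline{E}_i$ is the invariant part and $\overline{0_i}$ the closure of zero, so that the non-Hausdorff part is isolated once and discarded at the end rather than propagated through exact sequences. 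Your reliance on the Hodge-theoretic identification of $\calH^\cdot(\calF)$ with leafwise harmonic forms (via the Riemannian structure of this Lie foliation) is legitimate and is a genuinely different starting point from the paper's, but it does not by itself repair either of these two gaps.
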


The following two examples are of different nature. In both of them there are 
infinitely many linearly independent leafwise reduced cohomology classes of degree one. 
But these classes are induced by the handles in the leaves in Example~\ref{exmp:AlvHurder}, 
whereas they
are induced by the ``branches'' of the leaves that define a Cantor space of ends in
Example~\ref{exmp:Cantor}.

\begin{exmp}[{\cite{AlvHurder}}]\label{exmp:AlvHurder}
Let $L$ be a $\Bbb Z$-covering of the compact oriented surface of genus two; i.e., 
$L$ is a
cylinder with infinitely many handles attached to it. Each handle contains two circles defining
homology classes with non-trivial intersection. Hence for any injection of $\Bbb Z$ into the 
$n$-torus
$\R^n/\Z ^n$, the corresponding suspension foliation fulfills the hypotheses of
Corollary~\ref{cor:suspension2}, and thus has infinite dimensional reduced
leafwise cohomology of degree one. We could also use
Corollary~\ref{cor:homology2} instead of Corollary~\ref{cor:suspension2}.
\end{exmp} 

\begin{exmp}\label{exmp:Cantor} Let $\G$ be the free group with two generators, and $L$ a
$\G$-covering of the compact orientable surface of genus two. This $L$ has a Cantor space
of ends. Hence, for any injective homomorphism of $\G$ in a compact Lie group, the
reduced leafwise cohomology of degree one of the corresponding suspension foliation is of
infinite dimension by Corollary~\ref{cor:suspension3}.
\end{exmp}

\section{Leafwise reduced cohomology and subfoliations}\label{sec:subfoliations}

This section is devoted to the proof of Theorem~\ref{thm:subfoliations}. With the notations introduced in
Section~\ref{sec:main results}, the idea of the proof is the following. The
$(\iota_1,f_m)$ yield elements $\zeta_m\in\calH_c^r(\calF)$ by ``leafwise Poincar\'e duality''. On the
other hand, the arguments in \cite{Haefliger80} show that each $D_n$ can be considered as an element in
$\calH_{S_n}^r(\calF_2)'$, where $S_n=\supp D_n\subset M$. Moreover there are  homomorphisms
$\iota_2^\ast:\calH_c^\cdot(\calF )\ar\calH_{S_n}^\cdot(\calF_2)$ since the $\iota_2: S_n\ar M$ are
proper maps. Then the result follows by verifying
$\langle(\iota_1,f_m),(\iota_2,D_n)\rangle=D_n(\iota_2^\ast\zeta_m)$. 

We first explain the way ``leafwise Poincar\'e duality'' works. 
Consider the {\em transverse complex} $\Omega_c^\cdot(\Tr\calF)$ introduced in \cite{Haefliger80}, which
will be only used for degree zero. 
For any representative \calH\ of the holonomy pseudogroup of \calF, defined on some manifold $T$, 
$\Omega_c^0(\Tr\calF)$ is defined as the quotient of
$\cinf_c(T)$ over the subspace generated by the functions of the type
$\phi-h^\ast\phi$, where 
$h\in\calH$ and $\phi\in\cinf_c(T)$ with $\supp\phi\subset\dom h$.
 As a topological vector space, 
$\Omega_c^0(\Tr\calF)$ is independent of chosen representative of the holonomy pseudogroup. 
From the definition it easily follows that the dual space $\Omega_c^0(\Tr\calF )'$ can be
canonically identified to the space of holonomy invariant transverse
distributions of \calF. 
  
Now consider the representative \calH\ of the holonomy pseudogroup induced by an appropriate locally
finite covering of
$M$ by foliation patches $U_i$; that is, if $f_i:U_i\ar T_i$ is the local quotient map whose fibers
are the plaques in $U_i$, then appropriateness of this covering means that each equality
$f_j=h_{i,j}f_i$ on $U_i\cap U_j$ determines a diffeomorphisms $h_{i,j}:f_i(U_i\cap U_j)\ar f_j(U_i\cap
U_j)$, and the collection of all of these diffeomorphisms generate the pseudogroup 
\calH\ on $T=\bigsqcup_iT_i$. Fix also a partition
of unity
$\phi_i$ subordinated to the covering $U_i$. With these data we have a map
$\Omega_c^p(\calF)\ar\Omega_c^\cdot(T)$ given by
$\alpha\mapsto\sum_i\int_{f_i}\phi_i\alpha$, where $p=\dim\calF$ and $\int_{f_i}$ denotes integration 
along the fibers of $f_i$. This ``integration along the leaves'' induces an isomorphism 
$H^p(\calF)\cong\Omega_c^0(\Tr\calF)$ of topological
vector spaces, which is independent of the choice of the $U_i$ and $\phi_i$ \cite[\S 3.3]{Haefliger80}.
So 
$$\calH_c^p(\calF)'\equiv H_c^p(\calF )'\cong \Omega_c^0(\Tr\calF )'\;;$$
i.e., any holonomy invariant distribution $D$ can be
canonically considered as an element in $\calH_c^p(\calF)'$. 
Moreover $D$ can be also considered as an element in
$H_S^p(\calF )'\equiv\calH_S^p(\calF)'$ for $S=\supp D$ as follows from the following argument.
For any $\alpha\in\Omega_c^p(\calF )$, it is easily verified that
$D\left(\sum_i\int_{f_i}\phi_i\alpha\right)$ depends only on the restriction of
$\alpha$ to any neighborhood of the support of $D$ in $M$. Therefore, if
$\zeta\in H_S^p(\calF )$, $\alpha\in\Omega_S^p(\calF )$ is any representative of $\zeta$, and
$\beta\in\Omega_c^p(\calF )$ has the same restriction as $\alpha$ to some neighborhood of $S$,
then $D\left(\sum_i\int_{f_i}\phi_i\beta\right)$ does not depend on the choices of $\alpha$ and
$\beta$, and thus this is a good definition of
$D(\zeta)$.

Theorem~\ref{thm:subfoliations} will follow easily from the following result, which will be proved in
Section~\ref{sec:Poincare}.

\begin{prop}\label{prop:duality}
Let \calF\  be an oriented foliation on a manifold $M$. Let $\iota_1:(M_1,\calF_1) \rightarrow
(M,\calF )$ be a transversely regular immersed oriented subfoliation with
$\codim \calF =\codim \calF_1$, and $f$ a compactly supported basic
function of $\calF_1$. Then there is a class $\zeta\in\calH_c^k(\calF )$,
$k=\dim\calF -\dim\calF_1$,  such that
\begin{equation}\label{e:duality}
\langle(\iota_1,f),(\iota_2,D)\rangle=D(\iota_2^\ast\zeta)
\end{equation}
for any subfoliation $\iota_2:(M_2,\calF_2) \ar (M,\calF)$ and any
holonomy invariant transverse distribution $D$ of $\calF_2$ so that the left
hand side of \eqref{e:duality} is defined. In the right hand side of
\eqref{e:duality}, $D$ is considered as an element of $\calH_S^k(\calF_2)'$ for $S=\supp
D$, and $\iota_2^\ast$ denotes the homomorphism  $\calH_c^k(\calF )\ar{\calH}_S^k(\calF_2)$ induced
by $\iota_2$, which is defined since $\iota_2:S\ar M$ is a proper map. \end{prop}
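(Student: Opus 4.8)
The plan is to construct the class $\zeta\in\calH_c^k(\calF)$ by a ``leafwise Poincaré duality'' that converts the codimension data of $(\iota_1,f)$ into a leafwise $k$-form, and then to identify both sides of \eqref{e:duality} through the pull-back diagram $T$ already set up before Definition~\ref{defn:intersection}. First I would produce $\zeta$ as follows. Since $\iota_1$ is transversely regular and $\codim\calF=\codim\calF_1$, the image $\iota_1(M_1)$ is, near each leaf of $\calF_1$, a submanifold of a leaf of $\calF$ of complementary leafwise dimension $k=\dim\calF-\dim\calF_1$. Using a tubular neighborhood of $\iota_1$ inside the leaves—i.e.\ a leafwise normal bundle $\nu$ of $\calF_1$ in $\calF$, which is well defined because the two foliations share their codimension—I would take a leafwise Thom form $\Phi$ of $\nu$ (a closed $k$-form supported near the zero section, integrating to $1$ on each normal fiber) and set $\zeta=[\,(\iota_1)_!(f)\,]$, the leafwise Thom push-forward of $f$. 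Concretely this is the leafwise current/form represented by $f$ times the Thom class, and its compact support in $M$ follows from the compactness of $\supp f$ together with the properness built into the construction. The point is that $\zeta$ is \emph{intrinsic} to $(\iota_1,f)$ and does not involve $\iota_2$ or $D$, exactly as the statement demands.

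Next I would compute $\iota_2^\ast\zeta$ and evaluate $D$ on it. By the transverse intersection hypothesis, pulling the leafwise Thom form back along $\iota_2$ concentrates it near the intersection locus, which is precisely the manifold $T=\{(x_1,x_2):\iota_1(x_1)=\iota_2(x_2)\}$ of dimension $\codim\calF_2$. The standard excess/transversality computation for Thom forms under a transverse map shows that $\iota_2^\ast\zeta$ is represented, up to leafwise exact forms, by the data on $T$: the function $\sigma_1^\ast f$, the intersection sign $\varepsilon$, and a fiber-integration that matches the normal orientations. Feeding this into the description of $D$ as an element of $\calH_S^k(\calF_2)'$ given in the paragraphs preceding the proposition—where $D(\zeta)$ is computed by $D(\sum_i\int_{f_i}\phi_i\beta)$ for a representative $\beta$ agreing with $\zeta$ near $S$—and recognizing that $D$ induces the distribution $D_T$ on $T$, I would obtain
\begin{equation*}
D(\iota_2^\ast\zeta)=D_T(\varepsilon\,\sigma_1^\ast f).
\end{equation*}
Comparing with Definition~\ref{defn:intersection}, this is exactly $\langle(\iota_1,f),(\iota_2,D)\rangle$, which closes the argument. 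Throughout I would use that $\iota_2\colon S\to M$ is proper, so that $\iota_2^\ast$ lands in $\calH_S^k(\calF_2)$ and the pairing with $D$ is legitimate, and that $D$ is holonomy invariant, so that $D_T$ is well defined on $T$ and insensitive to the choice of compactly supported representative near $\supp\sigma_1^\ast f\cap\supp D_T$.

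The main obstacle I anticipate is making the ``leafwise Thom push-forward'' rigorous at the level of \emph{reduced} cohomology with the right topological and support properties, rather than merely at the level of leafwise forms or currents. Two issues must be handled carefully. First, the leafwise Thom form is only closed for $d_\calF$ after one checks that the leafwise normal bundle of $\calF_1$ in $\calF$ carries a smooth (in the ambient $M$) connection; transverse regularity of $\iota_1$ is what guarantees that the normal fibration varies smoothly transversally, so that $\Phi$ is a genuine element of $\Omega_c^k(\calF)$ and not merely a leafwise-smooth object. Second, I must verify that the right-hand side $D(\iota_2^\ast\zeta)$ depends only on the reduced class $\zeta\in\calH_c^k(\calF)$, i.e.\ vanishes when $\zeta$ lies in the closure of the trivial subspace; this reduces to the continuity of $D$ together with the fact, recalled in the excerpt, that $D$ descends to $\calH_S^k(\calF_2)'\equiv H_S^k(\calF_2)'$. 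Once the construction of $\zeta$ is pinned down so that its $\iota_2$-pullback is computed by honest transversal integration over $T$, the identification with $D_T(\varepsilon\,\sigma_1^\ast f)$ is the expected local transversality computation and should present no essential difficulty.
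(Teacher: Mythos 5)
Your construction of $\zeta$ coincides with the paper's: one forms the leafwise normal bundle $E=\iota_1^\ast T\calF/T\calF_1$, extends $\iota_1$ to a foliated local diffeomorphism $\tilde{\iota}_1$ on a disk bundle $V\subset E$ via the leafwise exponential map of \calF, and pushes forward $\pi^\ast f\,\Phi$ for a Thom representative $\Phi$ supported near the zero section; your two anticipated issues (ambient smoothness of the Thom form, and well-definedness of $D$ on reduced classes) are real but minor. The genuine gap is in the second half, where you assert that ``the standard excess/transversality computation for Thom forms under a transverse map'' identifies $\iota_2^\ast\zeta$ with the data $\varepsilon\,\sigma_1^\ast f$ on $T$, and that this step ``should present no essential difficulty.'' This is exactly the step the paper singles out as the main technical difficulty, and a direct computation of the kind you describe fails. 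The reason is that two incompatible tubular structures are in play: $\zeta$ is built using the leafwise exponential of \calF\ (the map $\tilde{\iota}_1$), whereas the evaluation of $D$ on $\iota_2^\ast\zeta$ must be carried out by transporting forms to the bundle $\pi_T:\sigma_1^\ast E\to T$ via the leafwise exponential $\tilde{\sigma}_2$ of $\calF_2$. The composite $h_j=\tilde{\iota}_1^{-1}\circ\iota_2\circ\tilde{\sigma}_2$ restricts to the identity on the zero section but is not fiber preserving; equivalently, the square relating $\tilde{\sigma}_1$, $\tilde{\sigma}_2$, $\tilde{\iota}_1$ and $\iota_2$ does not commute. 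Consequently $\int_{\pi_T}h_j^\ast(\pi^\ast f_j\,\Phi)$ is not equal to $\varepsilon\,\sigma_1^\ast f_j$ for any fixed Thom representative $\Phi$. Nor does ``up to leafwise exact forms'' rescue the computation: $D_T$ is a distribution evaluated on the actual function produced by fiber integration, not on a cohomology class of $T$, and the pieces $f_j$ of a decomposition of $f$ are not basic, so the individual pulled-back forms are not even leafwise closed.

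What is missing is the limiting mechanism the paper develops precisely for this purpose: a sequence $\Phi_n$ of Thom representatives whose supports shrink to the zero section, with the property that for any map $\phi$ (or $h$) equal to the identity on the zero section and orientation preserving on normal fibers, the distorted fiber integrals $\int_\pi\phi^\ast(\pi^\ast f\,\Phi_n)$ converge to $f$ in the $\cinf$ topology (Lemma~\ref{lemma:Thom} and Corollary~\ref{cor:Thom}); all the analytic work---sphere-bundle volume estimates, uniform bounds on Lie derivatives of the global angular form---lives there. Since the $\Phi_n$ all represent the same class, the quantity $D(\iota_2^\ast\zeta)=\sum_jD_T\bigl(\int_{\pi_T|_{W_j}}\gamma_{n,j}\bigr)$ is independent of $n$, and \eqref{e:duality} follows by letting $n\to\infty$ and invoking the continuity of $D_T$. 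Also needed, and absent from your proposal, is the support bookkeeping showing that $\supp D\cap\supp\iota_2^\ast\beta_{n,j}$ lies in $\tilde{\sigma}_2(W_j)$ for large $n$, which again uses the shrinking supports. Without this limiting argument (or an equally careful substitute, such as a leafwise proper homotopy from $h_j$ to a fiber-preserving map with its own support and smoothness control), the central identity $D(\iota_2^\ast\zeta)=D_T(\varepsilon\,\sigma_1^\ast f)$ remains unproved.
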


We do not know whether \eqref{e:duality} completely determines $\zeta$. If so, $\zeta$
could be called the {\em leafwise Poincar\'e dual class} of $(\iota_1,f)$.

\begin{proof}[Proof of Theorem~\ref{thm:subfoliations}] Let $\zeta_m\in\calH_c^k(\calF )$ be the
classes defined by the $(\iota_1,f_m)$ according to Proposition \ref{prop:duality}. If 
$P_n\in\calH_c^k(\calF )'$ is given by the composite
$$\begin{CD}
\calH_c^k(\calF ) @>\iota_2^\ast>> \calH_{S_n}^k(\calF_2)@>D_n>> \R\;,
\end{CD}$$
we have $P_n(\zeta_m)\neq 0$ if and only if $m=n$ by Proposition \ref{prop:duality}, yielding
the linear independence of the $\zeta_m$. 
\end{proof}

\section{Leafwise Poincar\'e duality}\label{sec:Poincare}
This section will be devoted to the proof of Proposition~\ref{prop:duality}.

\subsection{On the Thom class of a vector bundle}
The following lemma is a technical step in the proof of Proposition~\ref{prop:duality},
which will be proved in Section~\ref{subsec:Thom}.

\begin{lem} \label{lemma:Thom}
Let $M$ be a manifold and $\pi:E\ar M$ an oriented vector bundle. Identify $M$ to the image of
the zero section, whose normal bundle is canonically oriented. There is a sequence $\Phi_n$ of
representatives of the Thom class of $E$ such that, if $f$ is any function on
$M$, V is any neighborhood of $M$ in $E$, $K\subset M$ is any compact subset,  and
$\phi:V\ar E$ is any map which restricts to the identity on $M$ and its differential
induces an orientation preserving automorphism of the normal bundle of $M$, then
$\pi^{-1}(K)\cap\phi^{-1}(\supp \Phi_n)$ is compact for large enough $n$, and the
sequence of functions $\int_\pi\phi^\ast(\pi^\ast f\:\Phi_n)$ converges to $f$ over $K$ with
respect to the $\cinf $ topology.\end{lem}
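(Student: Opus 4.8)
The plan is to construct the sequence $\Phi_n$ from a single reference Thom representative by rescaling toward the zero section, and then to verify the two claimed properties (eventual compactness of $\pi^{-1}(K)\cap\phi^{-1}(\supp\Phi_n)$ and $\cinf$-convergence of $\int_\pi\phi^\ast(\pi^\ast f\,\Phi_n)$ to $f$ over $K$) by a local trivialization argument. First I would fix a Thom form $\Phi$ representing the Thom class with $\supp\Phi$ contained in a fixed neighborhood $W$ of the zero section; its fiber integral $\int_\pi\Phi$ is the constant function $1$. For each $n$, let $m_n:E\ar E$ be fiberwise scalar multiplication by $1/n$, and set $\Phi_n=m_n^\ast\Phi$. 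Since $m_n$ is an orientation-preserving bundle automorphism and fiber integration is invariant under such fiberwise rescaling (the Jacobian factors cancel against the pullback of the fiber volume), each $\Phi_n$ is again a representative of the Thom class, and $\supp\Phi_n=m_n^{-1}(\supp\Phi)$ shrinks into any prescribed neighborhood of $M$ as $n\to\infty$.

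For the compactness claim, the key point is that $\supp\Phi_n$ collapses onto $M$ as $n\to\infty$, so for $n$ large $\supp\Phi_n\subset V$ (where $\phi$ is defined). Because $\phi$ restricts to the identity on $M$ and $\phi^{-1}(\supp\Phi_n)$ lies in an arbitrarily small neighborhood of $M$ for large $n$, I would argue that over the compact base $K$ the intersection $\pi^{-1}(K)\cap\phi^{-1}(\supp\Phi_n)$ is contained in a fixed compact tube $\pi^{-1}(K)\cap W'$ for some relatively compact $W'$, hence is compact as a closed subset of a compact set. The quantitative control comes from the fact that, near the zero section, $\phi$ is a small perturbation of a linear orientation-preserving normal-bundle automorphism, so $\phi^{-1}(\supp\Phi_n)$ is comparable to the rescaled support and stays within bounded fiber radius over $K$.

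For the convergence claim I would work in a local trivialization $\pi^{-1}(U)\cong U\times\R^r$ over a chart $U$ meeting $K$, writing $g_n(x)=\int_\pi\phi^\ast(\pi^\ast f\,\Phi_n)(x)$ as a fiber integral. The hypothesis on $\phi$ says that along the zero section its differential induces an orientation-preserving automorphism $A(x)$ of the normal fiber; after the change of variables implementing $\phi$ and the rescaling $m_n$, the integrand concentrates on an $O(1/n)$ neighborhood of the origin in each fiber, where $f(\pi(\phi(\cdot)))\to f(x)$ uniformly on $K$. Pulling the constant $f(x)$ out and using that the total fiber integral of the (pulled-back, orientation-matched) Thom form is $1$ gives $g_n(x)\to f(x)$; the orientation-preserving condition guarantees the limiting integral is $+1$ rather than $-1$. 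To upgrade pointwise to $\cinf$ convergence over $K$, I would differentiate under the integral sign and observe that each derivative $\partial^\beta g_n$ is again a fiber integral of the same concentrated form against derivatives of $f\circ\pi\circ\phi$ and of the transition data, so the identical concentration argument yields $\partial^\beta g_n\to\partial^\beta f$ uniformly on $K$ for every multi-index $\beta$.

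The main obstacle I expect is the interaction between the nonlinearity of $\phi$ and the rescaling $m_n$ in the convergence step: one must show that the correction terms coming from $\phi$ being only tangent to (not equal to) its linearization along $M$ vanish in the limit, uniformly together with all derivatives over the compact set $K$. The cleanest way to handle this is to note that under $m_n$ the effective domain of integration shrinks like $1/n$, so by Taylor-expanding $\phi$ about the zero section the nonlinear corrections contribute terms that are $O(1/n)$ relative to the leading linear part; because everything is estimated over the fixed compact $K$ and over the fixed-radius rescaled support, these estimates are uniform, which is exactly what promotes the naive pointwise limit to convergence in the $\cinf$ topology.
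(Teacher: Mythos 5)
Your construction of the $\Phi_n$ is fine and essentially equivalent to the paper's (the paper builds them as $d(\rho_n(r)\,h^\ast\psi)$ from a global angular form; rescaling a fixed Thom form also works, although your scaling is backwards as written: pulling back by fiberwise multiplication by $1/n$ makes $\supp\Phi_n=n\,\supp\Phi$ grow, so you need to pull back by multiplication by $n$). The genuine gap is in both verification steps: you use only the behavior of $\phi$ near the zero section, but nothing in the hypotheses confines $\phi^{-1}(\supp\Phi_n)$ to a neighborhood of $M$. The set $\pi^{-1}(K)\cap\phi^{-1}(\supp\Phi_n)$ may contain points far from the zero section, and these far preimages are exactly what can destroy both conclusions. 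Concretely, take $E=\R\times\R\ar\R$, $V=E$ and $\phi(x,y)=(x,y-y^2)$: this restricts to the identity on $M$ and induces the identity on the normal bundle, yet $\int_\pi\phi^\ast(\pi^\ast f\,\Phi_n)\equiv 0$ for all large $n$, because the tube $\{|t|\le 1/n\}$ has two preimage sheets, near $y=0$ and near $y=1$, whose contributions $+f$ and $-f$ cancel; and if one shrinks $V$ to $\{y<1\}$, then $\pi^{-1}(K)\cap\phi^{-1}(\supp\Phi_n)$ is not even compact. So your assertions that ``$\phi^{-1}(\supp\Phi_n)$ lies in an arbitrarily small neighborhood of $M$'' and that ``the total fiber integral of the pulled-back Thom form is $1$'' are unproved---indeed false for a general map $\phi$ as in the statement---and they are the whole content of the lemma, not side remarks. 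No concentration or Taylor-expansion argument localized at the zero section can close this, since the failure comes from regions where $\phi$ is nowhere near its linearization.

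What the paper actually does is to read the hypothesis on $\phi$ as in its Remark~\ref{rem:sigma} (a diffeomorphism onto an open subset), and the first and crucial step of its proof is the preimage control: there exist $R>0$ and a relatively compact open neighborhood $U$ of $K$ such that $\pi^{-1}(U)\cap\phi^{-1}(B_R)\subset V$, the restriction $\phi:\pi^{-1}(U)\cap\phi^{-1}(B_R)\ar\phi\pi^{-1}(U)\cap B_R$ is a diffeomorphism, and $\pi:\pi^{-1}(U)\cap\phi^{-1}(S_r)\ar U$ is a sphere bundle with fiber volume at most $Cr^{k-1}$ for $0<r\le R$. This is what rules out the far sheets and yields the compactness claim. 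The convergence is then proved by a different device from yours: since $\phi^\ast\Phi_n$ is again a Thom representative over $U$ for $n>1/R$, one has $f=\int_\pi\pi^\ast f\,\phi^\ast\Phi_n$ there, so the error equals $\int_\pi(\pi^\ast f-\phi^\ast\pi^\ast f)\,\phi^\ast\Phi_n$, which is killed by combining $\sup|\pi^\ast f-\phi^\ast\pi^\ast f|\to 0$ on the shrinking tube with the pointwise bound $C_1r^{-k+1}$ on the angular part against the $Cr^{k-1}$ volume bound; higher derivatives are treated by Lie derivatives along horizontal lifts of vector fields on $U$, not by naively differentiating under a fiber integral whose domain moves with the base point. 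If you add the hypothesis that $\phi$ is a diffeomorphism onto its image and first establish the preimage/sphere-bundle control over a relatively compact neighborhood of $K$ (this is where compactness of $K$ and the implicit function theorem enter), your fiberwise change-of-variables argument can be carried through; without that step the proposal does not prove the lemma.
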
 

\begin{cor}\label{cor:Thom}
Let $\pi:E\ar M$ be an oriented vector bundle, and $\iota:N\ar M$ an
immersion. Let $\pi_N:\iota^\ast E\ar N$ be the pull-back vector bundle, and
$\tilde{\iota}:\iota^\ast E\ar E$ the canonical homomorphism. Identify $M$ and $N$ to
the image of the zero sections of $E$ and $\iota^\ast E$, respectively, and consider the
induced orientations on their normal bundles. Let $V$ be an open neighborhood of $N$ in 
$\iota^\ast E$, and $h:V\ar E$ an extension of $\iota$ such that the homomorphism
between the normal bundles of $N$ and $M$, defined by the differential of $h$, restricts to
orientation preserving isomorphisms between the fibers. Let $\Phi_n$ be the forms on $E$ given
by Lemma~\ref{lemma:Thom}, $K\subset N$ a compact subset, and $f$ a function on $M$. Then
$\pi_N^{-1}(K)\cap h^{-1}(\supp \Phi_n)$ is compact for large enough $n$, and the
sequence of functions $\int_{\pi_N}h^\ast(\pi^\ast f\:\Phi_n)$  converge to $\iota^\ast f$
over $K$ with respect to the $\cinf $ topology.
\end{cor}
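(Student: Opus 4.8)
The plan is to deduce Corollary~\ref{cor:Thom} from Lemma~\ref{lemma:Thom} by pulling the situation back along $\iota$, so that the immersed manifold $N$ plays the role of the base manifold $M$ appearing in the Lemma. The key observation is that the pull-back bundle $\pi_N\colon\iota^\ast E\ar N$ is itself an oriented vector bundle, and the canonical homomorphism $\tilde\iota\colon\iota^\ast E\ar E$ covers $\iota$. The given map $h\colon V\ar E$ is an extension of $\iota$ defined on a neighborhood $V$ of the zero section $N\subset\iota^\ast E$. So I would first apply Lemma~\ref{lemma:Thom} with the bundle $\pi\colon E\ar M$ and the forms $\Phi_n$ already fixed, but I need to produce a map playing the role of the Lemma's $\phi\colon V\ar E$ that restricts to the identity on the base. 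The difficulty is that $h$ does not restrict to the identity---it restricts to $\iota$, which is merely an immersion. The fix is to factor the problem through $\iota^\ast E$: rather than applying the Lemma directly to $h$, I would apply it on the base $N$ using the bundle $\pi_N$ and a suitably constructed self-map of (a neighborhood in) $\iota^\ast E$.

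Concretely, I would argue as follows. Apply Lemma~\ref{lemma:Thom} to the oriented bundle $\pi_N\colon\iota^\ast E\ar N$; this yields its own sequence of Thom representatives, but the corollary asks about the \emph{fixed} forms $\Phi_n$ on $E$ pulled back by $h$. The cleanest route is therefore to reduce to a statement purely on $\iota^\ast E$: set $\tilde\Phi_n=\tilde\iota^\ast\Phi_n$, which are representatives of the Thom class of $\iota^\ast E$ because $\tilde\iota$ is a fiberwise orientation-preserving isomorphism covering $\iota$, and verify that the sequence $\tilde\Phi_n$ inherits the defining property of Lemma~\ref{lemma:Thom}. Then, writing $h=\tilde\iota\circ\psi$ for an appropriate self-map $\psi\colon V\ar\iota^\ast E$ restricting to the identity on $N$ (this $\psi$ exists because $h$ lifts $\iota$ and agrees with $\tilde\iota$ on the zero section), the pull-back $h^\ast(\pi^\ast f\,\Phi_n)=\psi^\ast(\pi_N^\ast\iota^\ast f\,\tilde\Phi_n)$. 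Now the hypothesis on the differential of $h$ translates into the statement that $\psi$ is a map restricting to the identity on $N$ whose differential induces an orientation-preserving automorphism of the normal bundle of $N$ in $\iota^\ast E$, which is exactly the hypothesis of Lemma~\ref{lemma:Thom} applied on $\iota^\ast E$ with the function $\iota^\ast f$ on $N$.

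With this reduction in place, the two conclusions follow almost mechanically. The compactness of $\pi_N^{-1}(K)\cap h^{-1}(\supp\Phi_n)$ for large $n$ is identified with $\pi_N^{-1}(K)\cap\psi^{-1}(\supp\tilde\Phi_n)$, which is exactly what the Lemma guarantees on $\iota^\ast E$, since $\supp\tilde\Phi_n=\tilde\iota^{-1}(\supp\Phi_n)$ and $h^{-1}=\psi^{-1}\tilde\iota^{-1}$. Similarly, integration along the fibers commutes with the factorization: $\int_{\pi_N}h^\ast(\pi^\ast f\,\Phi_n)=\int_{\pi_N}\psi^\ast(\pi_N^\ast\iota^\ast f\,\tilde\Phi_n)$, and the Lemma asserts precisely that this sequence converges to $\iota^\ast f$ over $K$ in the $\cinf$ topology.

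The step I expect to be the main obstacle is the careful construction and bookkeeping around the self-map $\psi$ and the identification $h=\tilde\iota\circ\psi$ near the zero section. One must check that shrinking $V$ if necessary makes $\psi$ well-defined and that its differential does induce an orientation-preserving automorphism of the normal bundle of $N$ in $\iota^\ast E$---this hinges on the fact that $\tilde\iota$ is a fiberwise orientation-preserving isomorphism onto the fibers of $E$ along $\iota$, so that the orientation-preserving property of $dh$ on normal bundles is transferred correctly to $d\psi$. Once this factorization and the orientation compatibility are pinned down, everything else is a transcription of Lemma~\ref{lemma:Thom} from $E$ over $M$ to $\iota^\ast E$ over $N$, together with the naturality of fiber integration under bundle maps.
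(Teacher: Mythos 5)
Your reduction hinges on the factorization $h=\tilde{\iota}\circ\psi$ for a self-map $\psi:V\ar\iota^\ast E$ restricting to the identity on $N$, and this factorization does not exist in general. Such a $\psi$ exists precisely when $h(V)$ is contained in the image of $\tilde{\iota}$, which is $\pi^{-1}(\iota(N))$ --- a subset of positive codimension in $E$ whenever $\dim N<\dim M$. The hypotheses of the corollary only say that $h$ restricts to $\iota$ on the zero section and that its differential induces orientation-preserving isomorphisms between normal fibers; they do \emph{not} say that $h$ covers $\iota$ in the sense $\pi\circ h=\iota\circ\pi_N$, and you have conflated ``$h$ extends $\iota$'' with ``$h$ lifts $\iota$''. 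Concretely, take $M=\R^2$, $E=M\times\R$, $N=\R\times\{0\}$, so $\iota^\ast E=\R\times\R$, and let $h(x,t)=(x,t,t)$: this satisfies every hypothesis of the corollary (on $N$ it is $\iota$, and the induced map on normal bundles sends $\partial_t$ to $\partial_s$ modulo $TM$, an orientation-preserving isomorphism), yet $h(x,t)\notin\pi^{-1}(\iota(N))$ for $t\neq 0$, so no $\psi$ with $h=\tilde{\iota}\circ\psi$ can exist, and your identity $h^\ast(\pi^\ast f\,\Phi_n)=\psi^\ast(\pi_N^\ast\iota^\ast f\,\tilde{\Phi}_n)$ has no meaning. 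Note that fiber-compatible maps $h$ are exactly the case in which the corollary is not needed: in the proof of Proposition~\ref{prop:duality}, the maps $h_j$ to which the corollary is applied fail to intertwine the bundle projections, and the paper explicitly flags the resulting non-commutative diagram as ``the main technical difficulty''. Your argument, if completed, would only prove this trivial special case.

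The paper's proof goes in the opposite direction: instead of compressing $h$ into $\iota^\ast E$ (impossible), it fattens $\iota^\ast E$ inside $E$. First one reduces to the case where $\iota$, $\tilde{\iota}$ and $h$ are embeddings, by covering $K$ with finitely many open sets $U_i$ on which $\iota$ embeds and splitting $f=f_1+\cdots+f_m$ with $f_i$ supported in a tubular neighborhood of $\iota(U_i)$. Then one chooses an open disk bundle $D$ over $V$ (of the complementary rank) and extensions $\tilde{\iota}',h':D\ar E$ of $\tilde{\iota}$ and $h$ that are diffeomorphisms onto open subsets of $E$; the composite $\phi=h'\circ(\tilde{\iota}')^{-1}$, defined on $\tilde{\iota}'(D)$, is then a self-map of an open subset of $E$ which restricts to the identity on $\iota(N)$ and satisfies the hypotheses of Lemma~\ref{lemma:Thom} \emph{with the original forms} $\Phi_n$; finally the conclusion is pulled back to $N$ by naturality of fiber integration, $\iota^\ast\int_\pi\phi^\ast(\pi^\ast f\,\Phi_n)=\int_{\pi_N}h^\ast(\pi^\ast f\,\Phi_n)$. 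This extension-to-a-local-diffeomorphism step is the idea missing from your proposal; the pulled-back forms $\tilde{\iota}^\ast\Phi_n$ alone cannot detect where $h$ sends the complement of the zero section, which is exactly what the corollary must control.
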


\begin{proof} Let $U_1,\ldots,U_m$ be a finite open
cover of $K$ such that each $\iota:U_i\ar M$ is an embedding. For each $i$, there is
a compactly supported function $f_i$ on $M$ which is supported in some tubular neighborhood
$W_i$ of $\iota(U_i)$, and such that $f=f_1+\cdots +f_m$ on some neighborhood of $\iota(K)$.
Then, taking a neighborhood $V_i$ of each $U_i$ in $V$ so that $h:V_i\ar E$ is an
embedding, we get 
\begin{equation}\label{e:Thom7} \int_{\pi_N}h^\ast(\pi^\ast f\:\Phi_n)=
\sum_i\int_{\pi_N|_{\pi_N^{-1}(U_i)}}(h|_{V_i})^\ast(\pi^\ast
f_i\:\Phi_n)  \end{equation} 
around $K$, yielding the result if each term in the right hand side of \eqref{e:Thom7}
converges to $\iota^\ast f_i$. Therefore we can assume $\iota$, $\tilde{\iota}$ and $h$ are
embeddings. 

With this assumption, there is an open disk bundle $D$ over $V$, and extensions
$\tilde{\iota}',h':D\ar \pi^{-1}(V)$ of $\tilde{\iota}$ and $h$, respectively, which are
diffeomorphisms onto open subsets of $E$. Let $\phi$ denote the composite
$$\begin{CD}
\tilde{\iota}'(D) @>(\tilde{\iota}')^{-1}>> D@>h'>> \pi^{-1}(V)\;.
\end{CD}$$
Clearly, $\phi$ satisfies the conditions of Lemma \ref{lemma:Thom}, and we can suppose $f$ is
supported in $\tilde{\iota}'(D)$. So
$\int_\pi\phi^\ast(\pi^\ast f\:\Phi_n)$
converges to $f$ over any compact subset of $V$ with respect to the $\cinf $ topology. But 
\begin{eqnarray*}
\iota^\ast\int_\pi\phi^\ast(\pi^\ast
f\:\Phi_n)&=&\int_{\pi_N}\left((\tilde{\iota}')^\ast\phi^\ast(\pi^\ast
f\:\Phi_n)|_V\right)\\
 &=&\int_{\pi_N}h^\ast(\pi^\ast f\:\Phi_n)\;, 
\end{eqnarray*} 
and the result follows. 
\end{proof}

Observe that Lemma~\ref{lemma:Thom} is a particular case of Corollary~\ref{cor:Thom}. The
corollary could be proved directly with the arguments of the lemma, but the notation would
become more complicated.

\subsection{Proof of Lemma~\ref{lemma:Thom}}\label{subsec:Thom}
The following easy observations will be used to prove Lemma~\ref{lemma:Thom}.

\begin{rem}\label{rem:EF}
Let $E$ and $F$ be vector bundles over the manifolds $M$ and $N$, respectively. Suppose 
$f: E\ar F$ is a
homomorphism which restricts to isomorphisms on the fibers, and let $g:M\ar N$ be the map induced by
$f$. Thus the homomorphism $E\ar g^\ast F$, canonically defined by $f$, is an isomorphism.
Therefore there is a composite of homeomorphisms 
$$\cinf(F)\ar \cinf(g^\ast F)\ar \cinf(E)\;.$$
Here, the first homomorphism is canonically
defined by the pull-back diagram of $g^\ast F$, and the second one is induced by the
inverse of $E\ar g^\ast F$. If $s\mapsto s'$ by the above
composite, then $s'$ is determined by  $f(s'(x))=s(g(x))$ for $x\in M$. \end{rem}

\begin{rem}\label{rem:sigma}
Set $E=\R^n\times\R^k$, and let $\pi_i$, $i=1,2$, denote the factor projections
of $E$ onto $\R^n$ and $\R^k$, respectively. Let $K$ be a compact 
subset of $\R^n$, and $\phi:V\ar W$ a diffeomorphism between open neighborhoods
of $\R^n\times\{0\}$. Suppose $\phi$ restricts to the identity on $\R^n\times\{0\}$. For any
$r>0$, let $B_r,S_r\subset\R^k$ respectively denote the Euclidean ball and the Euclidean sphere of
radius r centered at the origin. Then there is an $R>0$ and an open neighborhood $U$ of $K$ such
that, for every $x\in U$ and every $y\in B_R$,  $\{x\}\times\R^k$ intersects transversely
$\phi^{-1}(\R^n\times\{y\})$ at just one point. Moreover, the map  
$$\sigma:U\times B_R\rightarrow(U\times{\R}^k)\cap\phi^{-1}(\R^n\times B_R)\;,$$ 
determined by  
$$\{\sigma(x,y)\}=(\{x\}\times{\R}^k)\cap\phi^{-1}(\R^n\times\{y\})\;,$$ 
is a diffeomorphism. Indeed $\sigma$ is smooth because each
$(U\times\R^k)\cap\phi^{-1}(\R^n\times\{y\})$ can be given as the graph of a map
$\psi_y:U\rightarrow\R^k$ depending smoothly on $y\in B_R$, and 
$\sigma(x,y)=(x,\psi_y(x))$. It also has a smooth inverse since
$(x,y)=(x,\pi_2\,\phi\,\sigma(x,y))$. Therefore, for $r\leq R$, 
$\pi_1:(U\times\R^k)\cap\phi^{-1}(\R^n\times S_r)\ar U$
is a sphere bundle, whose fibers are of volume uniformly bounded by
$Cr^{k-1}$ 
for some $C>0$ if $U$ and $R$ are
small enough.  
\end{rem}

To begin with the proof of Lemma~\ref{lemma:Thom}, fix a Riemannian structure on $E$, and let
$B_r,S_r\subset E$ respectively denote the corresponding open disk bundle and sphere bundle
of radius $r$. Set $S=S_1$. Let $\psi$ be a global angular form of $S$ \cite[\S 11]{BottTu}.
(If $E$ is of rank $k$, $\psi$ is a differential form of degree $k-1$ restricting to unitary
volume forms on the fibers and so that $d\psi=-\pi^\ast e$, where $e$ represents the Euler
class of $S$.) Let $r:E\rightarrow\Bbb R$ denote the radius function, and $h:E\setminus
M\ar S$ the deformation retraction given by $h(v)=v/r(v)$. For each $n$, let also
$\rho_n$ be a function on $[0,\infty)$ such that $-1\leq\rho_n\leq 0$, $\rho_n'\geq 0$,
$\rho_n\equiv -1$ on a neighborhood of $0$, and $\rho_n\equiv 0$ on $[1/n,\infty)$. Then each 
$$\Phi_n=d(\rho_n(r)\,h^\ast\psi)=\rho_n'(r)\,dr\wedge h^\ast\psi-\rho_n(r)\,\pi^\ast e$$
represents the Thom class of $E$ \cite[\S 12]{BottTu}. 

Local orthonormal frames canonically
define  isomorphisms of triviality of $E$ which restrict to local isomorphisms between
restrictions of each $S_r$ and trivial sphere bundles with typical fiber the Euclidean
sphere of radius $r$. So  Remark~\ref{rem:sigma} and the conditions satisfied by $\phi$ 
yield the existence of some $R,C>0$ and some relatively compact open neighborhood $U$ of
$K$ in $M$ so that 
\begin{itemize}
\item $\pi^{-1}(U)\cap\phi^{-1}(B_R)\subset V$,
\item the map $$\phi:\pi^{-1}(U)\cap\phi^{-1}(B_R)\rightarrow\phi\pi^{-1}(U)\cap B_R$$ is a
diffeomorphism whose differential is of fiberwise uniformly bounded norm, and 
\item for $0<r\leq R$, $\phi^{-1}(S_r)$
is transverse to the fibers of $\pi$ over $U$ and $\pi:\pi^{-1}(U)\cap\phi^{-1}(S_r)\rightarrow
U$ is a sphere bundle whose fibers are of volume uniformly bounded by $Cr^{k-1}$. 
\end{itemize}
The $\phi^\ast\Phi_n$ also represent the Thom class of $E$ over $U$ for $n>1/R$. Hence 
\begin{eqnarray} \lefteqn{f-\int_\pi\phi^\ast(\pi^\ast f\:\Phi_n)}\nonumber\\
&=&\int_\pi(\pi^\ast f-\phi^\ast\pi^\ast f)\,\phi^\ast\Phi_n\nonumber\\
&=&\int_0^{1/n}\rho_n'(r)\,dr\,\int_{\pi|_{\pi^{-1}(U)\cap\phi^{-1}(S_r)}}
(\pi^\ast f-\phi^\ast\pi^\ast f)\,\phi^\ast h^\ast\psi\label{e:Thom1}\\
 & &\text{}-\int_\pi(\pi^\ast f-\phi^\ast\pi^\ast f)\,\rho_n(\phi^\ast r)\,\phi^\ast\pi^\ast
e\;. \label{e:Thom2}\end{eqnarray}
We have to prove that \eqref{e:Thom1} and~\eqref{e:Thom2}  
converge uniformly to zero on $K$ as $n\rightarrow\infty$, as well as all of its derivatives
of any order.

Take a Riemannian metric on $M$, and a splitting $ TE={\mathcal V}\oplus{\calH}$, where ${\mathcal V}$ is the
vertical bundle of $\pi$ and ${\mathcal H}$ the horizontal bundle of any Riemannian connection. This
yields a Riemannian structure on $ TE$ defined in the standard way by using the canonical
isomorphisms ${\mathcal V}\cong\pi^\ast E$ and $\calH\cong\pi^\ast TM$. We also have 
$ TM=\calH|_M$, $\calH|_{S_r}\subset TS_r$, and 
\begin{equation}\label{e:Thom3}  TS_r=({\mathcal V}\cap TS_r)\oplus(\calH|_{S_r})\;. \end{equation}
Finally, we can assume  \begin{equation}\label{e:Thom4}
{\mathcal V}\cap\phi_\ast^{-1}(\calH)=0\quad\text{over}\quad\pi^{-1}(U)\cap\phi^{-1}(B_R)
\end{equation}
by the properties of $\phi$.

By the conditions on $\phi$, the supremum of $|\pi^\ast f-\phi^\ast\pi^\ast f|$ over
$\pi^{-1}(U)\cap\phi^{-1}(B_{1/n})$ converges to zero as $n\rightarrow\infty$. Also the
pointwise norm of $\phi^\ast\pi^\ast e$ is uniformly bounded on
$\pi^{-1}(U)\cap\phi^{-1}(B_{1/n})$, thus \eqref{e:Thom2} converges uniformly to zero as
$n\rightarrow\infty$. On the other hand, because the fiberwise norm of each 
$h_\ast: TS_r\rightarrow TS$ is $r^{-1}$, the pointwise norm of $\phi^\ast h^\ast\psi$ is
uniformly bounded on $\pi^{-1}(U)\cap\phi^{-1}(S_r)$ by $C_1r^{-k+1}$ with $C_1>0$ independent
of $r\leq R$. So \eqref{e:Thom1} also converges uniformly to zero on $U$ as
$n\rightarrow\infty$ by the estimate on the volume of the fibers of $\pi$ on
$\pi^{-1}(U)\cap\phi^{-1}(S_r)$.

Now fix vector fields $X_1,\ldots,X_m$ on $U$. By \eqref{e:Thom3} and~\eqref{e:Thom4}
the $X_i$ have liftings $Y_i$ which are sections of $\phi_\ast^{-1}\calH$ over 
$\pi^{-1}(U)\cap\phi^{-1}(B_R)$. For any subset $I\subset\{1,\ldots,m\}$, let $\theta_I$ denote
the composite of Lie derivatives $\theta_{Y_1}\cdots\theta_{Y_l}$ if $I=\{i_1,\ldots i_l\}$
with $i_1<i_2<\cdots<i_l$, and let $\theta_\emptyset$ be the identity homomorphism. Then the
order $m$ derivative $X_1\cdots X_m$ over \eqref{e:Thom1} and~\eqref{e:Thom2} is
respectively  given by
\begin{equation}\label{e:Thom5}
\sum_{I,J}\int_0^{1/n}\rho_n'(r)\,dr\,\int_{\pi|_{\pi^{-1}(U)\cap\phi^{-1}(S_r)}}
\theta_I(\pi^\ast f-\phi^\ast\pi^\ast f)\,\theta_J\phi^\ast h^\ast\psi\;, 
\end{equation}
and
\begin{equation}\label{e:Thom6}
-\sum_{I,J}\int_\pi\theta_I(\pi^\ast f-\phi^\ast\pi^\ast f)\,\rho_n(\phi^\ast 
r)\,\theta_J\phi^\ast\pi^\ast e\;, 
\end{equation}
where $I,J$ runs over the partitions of $\{1,\ldots,m\}$. By the properties of $\calH$ and
$\phi$, the supremum of the $|\theta_I(\pi^\ast f-\phi^\ast\pi^\ast f)|$
on $\pi^{-1}(U)\cap\phi^{-1}(B_{1/n})$ converges to zero as $n\rightarrow\infty$. Hence  
\eqref{e:Thom6} converges uniformly to zero on $K$ because the pointwise norm of the 
$\theta_J\phi^\ast\pi^\ast e$ can be uniformly bounded on $\pi^{-1}(K)\cap\phi^{-1}(B_R)$. The
uniform convergence of \eqref{e:Thom5} to zero  follows by estimating the pointwise norm of
the $\theta_J\phi^\ast h^\ast\psi$ on  $\pi^{-1}(K)\cap\phi^{-1}(S_r)$ by $C_2r^{-k+1}$ for
some $C_2>0$ independent of $r$. This in turn follows by proving a similar estimate for the
pointwise norm of $\theta_J' h^\ast\psi$ on $\phi\pi^{-1}(K)\cap S_r$, where the $\theta_J'$ are
defined in the same way as the $\theta_J$ by using the $Y_i'=\phi_\ast Y_i$ instead of the
$Y_i$. To do this, consider the multiplication map $\mu:[0,R]\times S\ar B_R$. Since
$$\mu_\ast:[0,1]\times(\calH|_S)\subset T([0,1]\times S)\rightarrow\calH$$  restricts
to isomorphisms on the fibers, by Remark~\ref{rem:EF} there are smooth sections $Y_i''$ of
$[0,1]\times(\calH|_S)$ so that $\mu_\ast(Y_i''(r,v))=Y_i'(rv)$. Also because
the composite 
$$\begin{CD}
(0,R]\times S @>\mu>> B_R\setminus M @>h>> S
\end{CD}$$ is the second factor projection, $\mu^\ast
h^\ast\psi$ is the form canonically defined by $\psi$ on $(0,R]\times S$, which
extends smoothly to $[0,R]\times S$. So, if $\theta_J''$ is defined in the same
way as the $\theta_J$ by using the $Y_i''$ instead of the $Y_i$, the pointwise
norm of the $\theta_J''\mu^\ast h^\ast\psi$ is uniformly bounded. Then the
desired estimation of the pointwise norm of the $\theta_J' h^\ast\psi$ follows
by observing that the fiberwise norm of $\mu_\ast:\{r\}\times TS\rightarrow TS_r$ is $r$.  

\subsection{Proof of Proposition~\ref{prop:duality}}
Recall that any local diffeomorphism
$\phi:M\ar N$ induces a homomorphism of complexes,
$\phi_\ast:\Omega_c(M)\rightarrow\Omega_c(N)$, defined as follows. For any
$\alpha\in\Omega_c(M)$, choose a finite open cover $U_1,\ldots,U_n$ of $\supp \alpha$
such that each restriction $\phi:U_i\rightarrow\phi(U_i)$ is a diffeomorphism. There is a
decomposition $\alpha=\alpha_1+\cdots+\alpha_n$ so that $\supp \alpha_i\subset U_i$. 
For each $i$, there is a unique $\beta_i\in\Omega_c(N)$ supported in $\phi(U_i)$ such that
$\beta_i|_{\phi(U_i)}$ corresponds to $\alpha_i|_{U_i}$ by $\phi$. Define
$\phi_\ast\alpha=\beta_1+\cdots+\beta_n$. This definition is easily checked to be independent of
the choices involved and compatible with the differential maps. If
$\phi:(M,\calF)\rightarrow(N,\calG )$ is a local diffeomorphism of foliated manifolds, we
similarly have a homomorphism $\phi_\ast:\Omega_c(\calF )\rightarrow\Omega_c(\calG )$ which is
compatible with the leafwise de~Rham derivative. Moreover $\phi_\ast$ is surjective if so is
$\phi$.

Now Proposition~\ref{prop:duality} can be proved as follows. 

There is a canonical injection of $ T\calF_1$  as vector subbundle of $\iota_1^\ast T\calF$. Let  $E=\iota_1^\ast T\calF / T\calF_1$, and
$\pi:E\ar M_1$ the bundle projection. Identify $M_1$ with the image of the zero section of $E$.
Fixing any Riemannian metric on $M$, there are induced Riemannian metrics on the $M_i$, and an
induced Riemannian structure on $E$. For each $r>0$, let $B_r\subset E$ denote the open disk bundle
of  radius $r$ over $M_1$. Then there is an $R>0$ and an open neighborhood $U$
of the support of $f$ in $M_1$ such that, if $V=\pi^{-1}(U)\cap B_R$, the
restriction of  $\iota_1$ to $U$ can be extended to a map of foliated manifolds, 
$\tilde{\iota}_1:(V,\pi^\ast\calF_1|_V)\rightarrow(M,\calF )$, which is defined over
each $x\in M_1$ as a composite of the restriction of the canonical homomorphism
$$\left(\iota_1^\ast  T\calF / T\calF_1\right)_x\rightarrow 
 T_{\iota_1(x)}\calF /\iota_{1\ast} T_x\calF_1\equiv 
(\iota_{1\ast} T_x\calF_1)^\perp\cap T_x\calF\;,$$ and the  exponential map of the
leaves of  \calF\  defined on the ball of radius $R$ centered at zero in 
$(\iota_{1\ast} T_x\calF_1)^\perp\cap T_x\calF$.  By elementary
properties of the exponential map and since $\iota_1$ is transversely regular
with $\codim \calF_1=\codim \calF $, $R$ can be chosen so that $\tilde{\iota}_1$ is a
local diffeomorphism and $\tilde{\iota}_1^\ast\calF =\pi^\ast\calF_1|_V$. 

$E$ is of rank $k$, and with an induced orientation. The representatives
$\Phi_n$ of its Thom class, given by Lemma~\ref{lemma:Thom}, can be assumed to be supported
in $B_R$. The $\Phi_n$ are of degree $k$, closed and compactly supported in the vertical
direction, i.e. with compactly supported restrictions to the fibers. Moreover all the
$\Phi_n|_{B_R}$ are pairwise cohomologous in the complex of forms in
$\Omega^\cdot(B_R)$ which are compactly supported in the vertical direction. On
the other hand, $f$ is basic and compactly supported. So the $\pi^\ast
f\,\Phi_n$ restrict to leafwise closed forms
$\alpha_n\in\Omega_c^k(\pi^\ast\calF_1|_V)$ which are pairwise cohomologous. Thus
the  $\tilde{\iota}_{1\ast}\alpha_n\in\Omega_c^k(\calF)$ are leafwise closed and
define the same class $\zeta\in\calH_c^k(\calF )$.

Let $U_1,\ldots,U_m$, be an open cover of the support of $f$ in $U$ such that each 
$\iota_1:U_j\ar M$ is an embedding, $j=1,\ldots,m$. The above $R$ can be chosen small enough 
so that the
$\tilde{\iota}_1:V_j=\pi^{-1}(U_j)\cap B_R\rightarrow\tilde{\iota}_1(V_j)$ are diffeomorphisms.
Take a decomposition $f=f_1+\cdots+f_m$ with each $f_j$ compactly supported in $U_j$, and let
$\alpha_{n,j}\in\Omega_c^k(\pi^\ast\calF_1|_{V_j})$ be the restriction to the leaves
of $\pi^\ast f_j\:\Phi_n$. Then, by definition, 
$$\tilde{\iota}_{1\ast}\alpha_n=\beta_{n,1}+\cdots+\beta_{n,m}\;,$$ where each
$\beta_{n,j}\in\Omega_c^k(\calF )$ is the extension by zero of the forms in
$\Omega_c^k(\calF |_{\tilde{\iota}_1(V_j)})$ which correspond to the
$\alpha_{n,j}|_{V_j}$ by $\tilde{\iota}_1$.

Given $\iota_2:(M_2,\calF_2)\rightarrow(M,\calF )$, we use the same notation as in the 
preamble of Definition~\ref{defn:intersection}. We can clearly assume the $U_j$ are contained in
$N_1$. Let $\iota=\iota_1\sigma_1=\iota_2\sigma_2:T\ar M$. There is a canonical isomorphism
$$\iota^\ast T\calF \cong
\sigma_1^\ast T\calF_1\oplus\sigma_2^\ast T\calF_2$$
because the $\iota_i|_{N_i}$ intersect transversely in \calF. So
$\sigma_1^\ast E\cong\sigma_2^\ast T\calF_2$
canonically. This isomorphism will be considered as an identity.

Let $\pi_T:\sigma_1^\ast E\ar T$ be the pull-back vector
bundle projection, and $\tilde{\sigma}_1:\sigma_1^\ast E\ar E$ the canonical
homomorphism. Identify $T$ to the image of the zero section of
$\sigma_1^\ast E$. For each $j$, take a relatively compact open subset
$O_j\subset\sigma_1^{-1}(U_j)$ containing the compact set 
$\supp \sigma_1^\ast f_j\cap\supp D_T$. The above $R$ can be chosen small enough 
so that $\sigma_2:O_j\ar N_2$ has an extension to a local diffeomorphism  
$\tilde{\sigma}_2:\pi_T^{-1}(O_j)\cap\tilde{\sigma}_1^{-1}(B_R)\ar N_2$ defined as the
composite of the restriction of the canonical homomorphism $\sigma_1^\ast
E\equiv\sigma_2^\ast T\calF_2\rightarrow T\calF_2$, and the exponential map
of the leaves of $\calF_2$ defined on the tubular neighborhood of radius $R$ of the zero
section in $ T\calF_2$. In this way, $\tilde{\sigma}_2$ maps each fiber of $\pi_T$ into
a leaf  of $\calF_2$. Observe that the diagram 
$$\begin{CD}
\tilde{\sigma}_1^{-1}(B_R)\cap\pi_T^{-1}(O_j)    @>\tilde{\sigma}_1>>   V_j\\
@V{\tilde{\sigma}_2}VV                                                @VV\tilde{\iota}_1V\\ 
N_2                                              @>\iota_2>>              M   
\end{CD}$$
is obviously non-commutative in general. This is the main technical difficulty. To solve it, we
have chosen the $\Phi_n$ so that their supports concentrate around $M_1$ and satisfy the needed
properties at the limit (Lemma~\ref{lemma:Thom} and Corollary~\ref{cor:Thom}).

We need the observation that  
\begin{equation}\label{e:pull-back}
\sigma_2\sigma_1^{-1}(A)=(\iota_2|_{N_2})^{-1}\iota_1(A) 
\end{equation}
for any subset $A\subset N_1$, as can be easily checked. 

Using the
compactness of $\overline{B_R}\cap\pi^{-1}(\supp f_j)$ and since 
$$\supp f_j=\bigcap_{0<r<R}\overline{B_r}\cap\pi^{-1}(\supp f_j)\;,$$
we easily get 
$$ \iota_1(\supp f_j)=
\bigcap_{0<r<R}\tilde{\iota}_1\left(\overline{B_r}\cap\pi^{-1}(\supp f_j)\right)\;.$$
Therefore
\begin{align*}
\bigcap_{0<r<R}\supp D\cap\iota_2^{-1}
\tilde{\iota}_1\left(\overline{B_r}\cap\pi^{-1}(\supp f_j)\right) 
&=\supp D\cap\iota_2^{-1}\iota_1(\supp f_j)\\ 
&=\supp D\cap\sigma_2\sigma_1^{-1}(\supp f_j)\\ 
&=\sigma_2(\sigma_2^{-1}(\supp D)\cap\sigma_1^{-1}(\supp f_j))\\ 
&=\sigma_2(\supp D_T\cap\supp \sigma_1^\ast f_j)\;, 
\end{align*} 
where the second equality follows by \eqref{e:pull-back}. Then, since the $$\supp D\cap\iota_2^{-1}
\tilde{\iota}_1\left(\overline{B_r}\cap\pi^{-1}(\supp f_j)\right)$$ are compact, and 
since  $$\tilde{\sigma}_2\left(\tilde{\sigma}_1^{-1}(B_R)\cap\pi_T^{-1}(O_j)\right)$$ is an open
neighborhood of 
$$\sigma_2(\supp D_T\cap\supp \sigma_1^\ast f_j)\;,$$ 
there is an $r<R$ such that
$$\supp D\cap\iota_2^{-1}\tilde{\iota}_1\left(\overline{B_r}\cap\pi^{-1}(\supp f_j)\right)\subset 
\tilde{\sigma}_2\left(\tilde{\sigma}_1^{-1}(B_R)\cap\pi_T^{-1}(O_j)\right)\;.$$
So 
$$\supp D\cap\supp \iota_2^\ast\beta_{n,j}\subset\tilde{\sigma}_2(W_j)$$
for large enough $n$, where
$$W_j=\tilde{\sigma}_2^{-1}\iota_2^{-1}\tilde{\iota}_1(V_j)\cap
\tilde{\sigma}_1^{-1}(B_R)\cap\pi_T^{-1}(O_j)\;.$$
We can assume this holds for every $n$. 
Hence there is some $\omega_{n,j}\in\Omega_c^k(\calF_2)$ which is supported in
$\tilde{\sigma}(W_j)$ and has the same restriction to some neighborhood of $\supp D$ as
$\iota_2^\ast\beta_{n,j}$. If $\calF_{\pi_T}$ is the foliation on $\sigma_1^\ast E$ 
defined by the fibers of $\pi_T$, there is some
$\gamma_{n,j}\in\Omega_c^k(\calF_{\pi_T}|_{W_j})$ such that 
$(\tilde{\sigma}_2|_{W_j})_\ast\omega_{n,j}=
\iota_2^\ast\beta_{n,j}$. We get 
\begin{equation}\label{e:gammanj}
D(\iota_2^\ast\zeta)=
\sum_jD_T\left(\int_{\pi_T|_{W_j}}\gamma_{n,j}\right)
\end{equation} 
by definition. 
Let $h_j:W_j\ar E$ be the immersion given by the composite
$$\begin{CD}
W_j @>\tilde{\sigma}_2>> \iota_2^{-1}\tilde{\iota}_1(V_j)
@>\iota_2>> \tilde{\iota}_1(V_j)
@>\tilde{\iota}_1^{-1}>> V_j\subset E\;.
\end{CD}$$
Clearly $h_j$ is an extension of $\sigma_1:O_j\ar N_1\subset M_1\subset E$, and 
$\gamma_{n,j}=h_j^\ast(\pi^\ast f_j\:\Phi_n)$ around 
$W_j\cap\pi_T^{-1}(\sigma_1^{-1}(\supp f_j)\cap\supp D_T)$. Moreover the
homomorphism between the normal bundles of $O_j$ and $M_1$, defined by the differential of
$h_j$, restricts to isomorphisms on the fibers. These isomorphisms are orientation preserving on
fibers over points with $\varepsilon=1$, and orientation reversing on fibers over points
with $\varepsilon=-1$. Therefore $\int_{\pi_T|_{W_j}}\gamma_{n,j}$ converges to 
$\varepsilon\,\sigma_1^\ast f_j$ on $\sigma_1^{-1}(\supp f_j)\cap\supp D_T$ 
with respect to the $\cinf $ topology by Corollary~\ref{cor:Thom}. 
Hence \eqref{e:gammanj} is equal to $\langle(\iota_1,f),(\iota_2,D)\rangle$, and the
proof is complete.

\begin{rem}\label{rem:subfoliations}
Observe that, in Proposition~\ref{prop:duality}, $\zeta$ has representatives 
supported in any neighborhood of
$\iota_1(M_1)$. Thus, in Theorem~\ref{thm:subfoliations}, the linearly independent
classes $\zeta_m\in\calH_c^\cdot(\calF )$ also have representatives supported in any neighborhood
of $\iota_1(M_1)$. \end{rem}

\section{Case where the leaves have homology classes with non-trivial intersection
\label{sec:homology}} 
This section will be devoted to the proof of Corollary~\ref{cor:homology1}. 

Let $M$ be the ambient manifold of \calF. Let $\pi_i:\iota_i^\ast( TM/ T\calF )\ar K_i$
denote the pull-back vector bundle projection, and identify $K_i$ to the image of its zero
section. Fix a Riemannian metric
on $M$ and, for some $R>0$, let $M_i$ be the tubular neighborhood of radius $R$ around $K_i$ in 
$\iota_i^\ast( TM/ T\calF)$. Such $R$ can be chosen so that the maps 
$\tilde{\iota}_i:M_i\ar M$ are well defined as composites of the restrictions
of the canonical homomorphisms 
$\iota_i^\ast( TM/ T\calF)\ar 
 TM/ T\calF\equiv( T\calF)^\perp$, and the restriction of the exponential
map of $M$ to the tubular neighborhood of radius $R$ of the zero section of
$ T\calF^\perp$. Choose $R$ small enough so that
$\tilde{\iota}_i:\pi_i^{-1}(x_i)\cap M_i\rightarrow\tilde{\iota}_i(\pi_i^{-1}(x_i)\cap M_i)$ is
an embedded transversal of \calF\  for each $i$ and each $x_i\in M_i$. Observe that
$\tilde{\iota}_1(\pi_1^{-1}(x_1)\cap M_1)=\tilde{\iota}_2(\pi_2^{-1}(x_2)\cap M_2)$ if
$\iota_1(x_1)=\iota_2(x_2)$. The $\tilde{\iota}_i$ are thus transverse to \calF, 
and the $\calF_i= \tilde{\iota}_i^\ast\calF$ have the same codimension as \calF. Then
$\tilde{\iota}_i$ are transversely regular immersions of foliated manifolds. By deforming the 
$\iota_i$ if needed, we can suppose the $\iota_i$ intersect each other transversely, and thus
the $\tilde{\iota}_i$ intersect transversely in \calF. Moreover the orientations of the $K_i$
induce  orientations of the $\calF_i$.

Each $K_i$ is a closed leaf of $\calF_i$ whose holonomy representation is given by the
composite \eqref{e:composite}. So the holonomy group of $K_i$ is given by germs of
local isometries. Hence $\calF_i$ is a Riemannian foliation around $K_i$, as follows easily
from \cite[Theorem~2 in Chapter~IV]{Haefliger58}. (See also \cite[Theorem~2.29 in
Chapter~II]{Godbillon} or \cite{HectorHirsch83}.) We can assume the whole $\calF_i$ is Riemannian, which
can thus be described as follows \cite{Haefliger88,MolinoPierrot}. Fix a transverse Riemannian structure
of $\calF_i$. Let $Q_i$ be the $O(q)$-principal bundle over $M_i$ of transverse orthonormal
frames of $\calF_i$ with the transverse Levi-Civita connection, and $\hat\calF_i$ the
horizontal lifting of $\calF_i$ to $Q_i$ \cite{Molino82,Molino88}. Let $P_i$ be a leaf closure
of $\hat\calF_i$ over $K_i$. Then $P_i$ is an $H_i$-principal bundle over $K_i$ for some
closed subgroup $H_i\subset O(q)$. For the open disk $B\subset\R^q$ of radius $R$ 
centered at the origin, we can assume 
$M_i\equiv(P_i\times B)/H_i$ as fiber bundles over $K_i$, where the $H_i$-action on
$P_2\times B$ is the diagonal one; i.e. $(z,v)h=(zh,h^{-1}v)$ for $(z,h)\in P_i\times B$ and
$h\in H_i$. Moreover the above identity can be chosen so that $\calF_i$ is identified to the
foliation whose leaves are the projections of products of leaves of $\hat\calF_i$ in $P_i$
and points in $B$. (This description is simpler than the one in \cite{Haefliger88} and
\cite{MolinoPierrot} because the leaf closure $K_i$ is just a compact leaf.)

Consider the transverse 
Riemannian structure of each $\calF_i$ defined by the Euclidean metric on $B$ using the above
description. Since the elements in the image of the composites \eqref{e:composite} are germs of
local isometries for the same metric on $\R^q$, the composite
$$
B\equiv\pi_2^{-1}(x_2)\cap M_2 \stackrel{\tilde{\iota}_2}{\longrightarrow}
\tilde{\iota}_2(\pi_2^{-1}(x_2)\cap M_2)
=\tilde{\iota}_1(\pi_1^{-1}(x_1)\cap M_1)\stackrel{\tilde{\iota}_1^{-1}}{\longrightarrow}
\pi_1^{-1}(x_1)\cap M_1\equiv B
$$  
is an isometry
around the origin
for all $(x_1,x_2)\in K_1\times K_2$ with $\iota_1(x_1)=\iota_2(x_2)$. We can
assume such composite is an isometry on the whole $B$, which will be denoted by
$\phi_{x_2,x_1}$.

With the above description, any compactly supported basic function $f$ of $\calF_1$ can be
canonically considered as an $H_1$-invariant compactly supported function on $B$, and any 
compactly supported holonomy invariant transverse distribution $D$ of $\calF_2$ can be
canonically considered as a compactly supported $H_2$-invariant distribution on $B$. 
For such $f$ and $D$, we
clearly have 
\begin{equation}\label{e:homology}
\langle(\tilde{\iota}_1,f),(\tilde{\iota}_2,D)\rangle=
\sum\varepsilon(x_1,x_2)\,D(\phi_{x_2,x_1}^\ast f)\;,\end{equation} 
where the sum runs over the 
pairs $(x_1,x_2)\in  K_1\times K_2$ with $\iota_1(x_1)=\iota_2(x_2)$. Here 
$\varepsilon(x_1,x_2)=\pm 1$ depending on whether the identity 
$$ T_{\iota_i(x_i)}B\equiv
\iota_{1\ast} T_{x_1}K_1\oplus\iota_{2\ast} T_{x_2}K_2$$ is orientation preserving or
orientation reversing. Let $f_m$ be a sequence of compactly supported $O(q)$-invariant
functions in $B$ with integral equal to one and pairwise disjoint supports, and let $\mu_m$ 
be the restriction of the Euclidean measure to the support of $f_m$. Then 
$$\langle(\tilde{\iota}_1,f_m),(\tilde{\iota}_2,\mu_n)\rangle=
\langle\iota_1,\iota_2\rangle\,\int_Bf_m\,d\mu_n$$
by \eqref{e:homology}, where $\langle\iota_1,\iota_2\rangle$ is the intersection number of
$\iota_1$ and $\iota_2$ in $B$. So $\dim\calH_c^{k_2}(\calF )=\infty$ by 
Theorem~\ref{thm:subfoliations}. Similarly, $\dim\calH_c^{k_1}(\calF )=\infty$, which completes
the proof.

\section{Case of suspension foliations \label{sec:suspension}}

\begin{proof}[Proof of Corollary~\ref{cor:suspension1}] 
Recall the notation used for suspension foliations in the statement of
Corollary~\ref{cor:suspension1}, and consider the fiber bundles $M_i = \iota_i^{\ast}M$ over $K_i$. Each
canonical map $\tilde{\iota}_i:M_i\ar M$ is transverse to 
\calF,  and let $\calF_i=\tilde{\iota}_i^\ast\calF$.  Then
$\tilde{\iota}_i$ are transversely regular immersions of foliated manifolds. By deforming the $\iota_i$ if needed,
we can suppose the $\iota_i$ intersect each other transversely, thus the $\tilde{\iota}_i$ 
intersect each other transversely in \calF. Moreover the orientations of the $K_i$ induce 
orientations of the $\calF_i$. 

The group of deck transformations of each pull-back covering map $\iota_i^\ast L\rightarrow
K_i$ is isomorphic to
$\G_i$, and $\calF_i$ is canonically isomorphic to the corresponding 
suspension foliation given by the restriction to $\G_i$ of the $\G$-action on 
$T$. Hence the $f_m$ can be canonically 
considered as compactly supported basic functions of $\calF_1$, and the $D_n$ can be
canonically considered as holonomy invariant transverse distributions of $\calF_2$. The
$\tilde{\iota}_2:\supp D_n\ar M$ are clearly proper, and we easily get 
$$\langle(\tilde{\iota}_1,f_m),(\tilde{\iota}_2,D_n)\rangle=
\langle\iota_1,\iota_2\rangle\,D_n(f_m)\;.$$
Therefore the result follows from Theorem~\ref{thm:subfoliations}. 
\end{proof}

\begin{proof}[Proof of Corollary~\ref{cor:suspension2}] Let $A_n$ be a sequence of
$\G_1$-saturated open subsets of $T$ with  non-trivial $\mu$-measure and pairwise disjoint
$\G_2$-saturations. Clearly, there are open sets $B_n$ of $T$ with positive
$\mu$-measure and such that $\overline{B_n}\subset A_n$. Since the closure of
$\G_1$ in the group of diffeomorphisms of $T$ is a compact Lie group, there exists a
sequence of non-negative $\G_1$-invariant functions $f_n$ on $T$  such that
$B_n\subset\supp f_n\subset A_n$. Let $\mu_n$ be the $\G_2$-invariant measure
on $T$ defined as the product of $\mu$ and the characteristic
function of the closure of the $\G_2$-saturation of $\supp f_n$. Then 
$\int_Tf_m\,d\mu_n\neq 0$ if and only if $m=n$, 
and the result follows by Corollary~\ref{cor:suspension1}. 
\end{proof}

\begin{proof}[Proof of Corollary~\ref{cor:suspension3}] Since some lift of $c$ to $L$ joins two distinct
points of its end set,
$L$ is disconnected by some codimension one immersed closed submanifold, $\iota:K\ar L$, such that $c$
and $\pi\iota$ define homology classes of $B$ with non-trivial intersection. Clearly, the
composite
$$\begin{CD}
\pi_1(K) @>\pi_1(\pi\iota)>>\pi_1(B) @>h>> \G
\end{CD}$$
is trivial, and the image of the composite
$$\begin{CD}
\pi_1(S^1) @>\pi_1(c)>>\pi_1(B) @>h>> \G
\end{CD}$$
is $\langle a\rangle$. Take a sequence $A_n$ of disjoint non-trivial $H$-invariant open subsets of $T$.
Since $H$ is an abelian compact Lie group (a torus), there
is an $H$-invariant probabilistic measure supported in any $H$-orbit in $T$. Take thus one of such measures
$\mu_n$ supported in each $A_n$. Then the result follows from Corollary~\ref{cor:suspension2} by
taking as $\mu$ the sum of the $\mu_n$. 
\end{proof}

To prove Proposition~\ref{prop:orbit density}, we use the following.

\begin{lem} \label{lemma:orbit density} Let $\G$ be a finitely
generated group, and $X$  a connected $T_1$ topological space. For any continuous action of
$\G$ on $X$, a finite union of orbits is dense if and only
if so is each orbit in the union. \end{lem}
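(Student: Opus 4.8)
The forward implication is immediate: every orbit occurring in the union is contained in the union, so if each such orbit is dense then the union certainly is, and conversely the union is dense as soon as one of its members is. Thus the whole content lies in the converse, where I assume the finite union $\G x_1\cup\dots\cup\G x_n$ is dense and must deduce that each individual orbit $\G x_i$ is dense. The natural first move is to pass to orbit closures: put $C_i=\overline{\G x_i}$. Because $\G$ acts by homeomorphisms, the closure of a $\G$-invariant set is again $\G$-invariant, so each $C_i$ is a closed $\G$-invariant subset of $X$, and since closure commutes with finite unions the hypothesis becomes $C_1\cup\dots\cup C_n=X$. The problem is thereby reduced to the topological-dynamical statement that a connected $T_1$ space covered by finitely many orbit closures $C_i$ satisfies $C_i=X$ for every $i$. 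After discarding repetitions I may assume the $C_i$ are distinct and order them by inclusion.

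The engine of the proof is a connectedness argument applied to this finite invariant cover. For each index $i$ I would introduce the $\G$-invariant open set $U_i=X\setminus\bigcup_{j\neq i}C_j$ of points belonging to $C_i$ and to no other member of the cover; these are pairwise disjoint and open. The key structural fact, special to orbit closures rather than arbitrary invariant closed sets, is that any point lying in an overlap $C_i\cap C_j$ has its entire orbit, and hence its orbit closure, trapped inside the closed invariant set $C_i\cap C_j$. I would use finite generation at exactly this point: fixing a finite symmetric generating set, one can follow an orbit generator by generator through a uniformly chosen system of neighbourhoods, which lets me control how the overlaps sit and how density of one orbit forces density of a neighbouring one. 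Combining this local control with connectedness of $X$, which forbids splitting $X$ apart along the invariant overlaps, should force the members of the cover to coincide and then to exhaust $X$; transporting this conclusion along the homeomorphisms of the action returns the density of each orbit $\G x_i$ in turn.

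The step I expect to be the main obstacle is precisely this propagation across overlaps: converting the generator-by-generator control of an orbit near a boundary point of some $C_i$ into the global statement that density of the union compels density of \emph{every} single orbit. All three hypotheses are genuinely needed here, and I would be careful to use each. The $T_1$ axiom guarantees that points, and therefore finite orbits, are closed, so that the $C_i$ and their intersections really are closed invariant sets on which the argument can be run; connectedness is what prevents a proper orbit closure from being separated off from the rest of the cover; and finite generation furnishes the uniform neighbourhood bookkeeping without which the density of one orbit could not be carried over to another. I would structure the final write-up so that the lemma drops out by applying this connectedness argument to the reduced cover $\{C_i\}$ and then pushing the resulting density back along the action to each $\G x_i$.
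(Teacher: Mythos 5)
Your opening reduction---replace the dense union by the closed invariant cover $X=C_1\cup\dots\cup C_n$ with $C_i=\overline{\G x_i}$ and show each $C_i=X$---matches the paper's first step, but beyond it the proposal contains no proof: the ``propagation across overlaps'' that you yourself single out as the main obstacle is precisely the content of the lemma, and the mechanism you offer for it (a finite symmetric generating set and generator-by-generator neighbourhood bookkeeping) is both vague and aimed at the wrong hypothesis. Finite generation of $\G$ is in fact never used in the paper's proof of Lemma~\ref{lemma:orbit density}; it enters only later, in the proof of Proposition~\ref{prop:orbit density}, through Stallings' theorem. Likewise your stated role for $T_1$ (``so that the $C_i$ and their intersections really are closed invariant sets'') is empty, since closures are closed in any space. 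Finally, connectedness applied to a finite closed invariant cover cannot suffice on its own: a connected space can perfectly well be a finite union of proper closed invariant sets (for the trivial action every closed set is invariant, and $[0,1]=[0,1/2]\cup[1/2,1]$), so the invariance of the overlaps $C_i\cap C_j$---the only structural fact your sketch extracts from the orbit structure---cannot force the $C_i$ to coincide.

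The idea you are missing is a dichotomy inside each orbit closure between accumulation and isolation. The paper writes $\overline{\G x_i}=L_i\sqcup I_i$, where $L_i=\bigcap_F\overline{(\G\setminus F)x_i}$ with $F$ running over the finite subsets of $\G$: each $L_i$ is closed and saturated, while every point of $I_i$ is (using $T_1$) a point of the orbit $\G x_i$ itself possessing a neighbourhood that meets $\overline{\G x_i}$ in only a finite set. The covering hypothesis is then used at full strength: a point outside the closed saturated set $L=L_1\cup\dots\cup L_n$ lies in no $L_k$, hence has a neighbourhood meeting each of the finitely many $\overline{\G x_k}$---and therefore meeting $X$---in a finite set, so by $T_1$ it is an isolated point of $X$, and a clopen singleton contradicts connectedness (the case where $X$ is a single point being trivial). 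Thus $X=L$, every $I_i$ is empty, so $L_i=\overline{\G x_i}$, and only now does a connectedness argument of the kind you envisage become available: the $L_i$ are closed, saturated, two of them that meet must coincide, and connectedness forces them all to equal $X$. This accumulation-versus-isolation decomposition, not any bookkeeping with generators, is what converts density of the finite union into density of each single orbit; without it your connectedness argument has nothing to act on.
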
 

\begin{proof} Take $x_1,\ldots,x_n\in X$ such that 
$$X=\overline{\G x_1\cup\ldots\cup\G x_n}=\overline{\G x_1}\cup\ldots
\cup\overline{\G x_n}\;.$$
Each orbit closure $\overline{\G x_i}$ can be decomposed as a disjoint union
of sets
$$L_i=\bigcap_F\overline{(\G\setminus F)x_i}\;,\quad I_i=\overline{\G x_i}\setminus L_i\;,$$
where $F$ runs over the finite subsets of $\G$. We have $X=L\cup I$, where
$L=\bigcup_{i=1}^nL_i$ and $I=\bigcup_{i=1}^nI_i$. Moreover, since $L$ is
saturated we have $L\cap I=\emptyset$. So $I=\emptyset$ because $X$ is $T_1$ and
connected. (If we had $I\neq\emptyset$, for any $y\in I$, $\{y\}$ would be closed in
$X$ because $X$ is $T_1$. But since $L$ is closed and $I=X\setminus L$
is discrete, $\{y\}$ would be also open in $X$. Thus $X$ would not be connected.)
Therefore $X=L$ and $L_i= \overline{\G x_i}$ for each $i$. But each $L_i$ is
closed in $X$, and $L_i\cap L_j\neq\emptyset$ implies $L_i=L_j$,
obtaining $X=L_i$ for every $i$ by the connectedness of $X$. 
\end{proof}

\begin{proof}[Proof of Proposition~\ref{prop:orbit density}] Clearly, if the $C$-orbits are dense in
$X$, so are the $\G$-orbits. 

Reciprocally, suppose the $\G$-orbits are dense. By a theorem of Stallings 
\cite{Stallings}, there is a finite normal subgroup $F\subset\G$ such that
$\G_1=\G/F$ is isomorphic either to $\Bbb Z$ or to the diedric group $\Z_2\ast\Z _2$. The
action of $\G$ on $X$ defines an action of $\G_1$ on the connected $T_1$ space $X_1=X/F$
with dense orbits. Since $C$ is infinite, so is its projection $C_1$ to $\G_1$, and any
infinite subgroup of such $\G_1$ is of finite index. Therefore any $\G_1$-orbit in $X_1$
is a finite union of $C_1$-orbits, and thus the $C_1$-orbits are dense in $X_1$ by
Lemma~\ref{lemma:orbit density}. This implies the density of the $CF$-orbits in $X$ because the
canonical projection of $X$ onto $X_1$ is open and continuous. But any $CF$-orbit is a finite
union of $C$-orbits. Hence the $C$-orbits are dense by Lemma~\ref{lemma:orbit density}. 
\end{proof}

\section{Case of Lie foliations with compact semisimple structural
Lie algebra \label{sec:css}}

Theorem~\ref{thm:css} will be proved in this section (Corollaries~\ref{cor:css,k=1,2}
and~\ref{cor:css}). 

\subsection{Construction of a spectral sequence for an arbitrary Lie foliation on a closed
manifold}  

Let \calF\  be a Lie foliation with dense leaves on a closed manifold $M$. Let $\g$ be the 
structural Lie algebra of \calF,  and $G$ the simply connected Lie group with Lie algebra $\g$.
Let $\pi:\widetilde{M}\ar M$ be the universal covering map. Then the leaves of
$\widetilde{\calF}=\pi^\ast\calF$ are the fibers of a fiber bundle
$D:\widetilde{M}\ar G$. It will be convenient to consider the right action of $\pi_1(M)$ on $\widetilde M$
by deck transformations and the left action of $G$ on itself by left translations. Thus $D$ is
anti-equivariant with respect to the holonomy homomorphism
$h:\pi_1(M)\ar G$; i.e., $D(\tilde{x}\gamma)=h(\gamma)^{-1}D(\tilde{x})$ \cite{Fedida}. The density of
the leaves implies the density of $\G =
h(\pi_1(M))$ in $G$. The
homomorphism $h$ defines an action of $\pi_1(M)$ on $G$ by left translations, yielding
the corresponding suspension foliation $\calG $ on
$N=\left(\widetilde{M}\times G\right)/\pi_1(M)$ (defined as in Section~\ref{sec:suspension}). $\calG $
is a Lie foliation with the same transverse structure as \calF,  given by $(G,\G)$.

The section $(\text{id} ,D):\widetilde{M}\rightarrow\widetilde{M}\times G$ is
$\pi_1(M)$-equivariant:
$$(\text{id} ,D)(\tilde{x}\gamma)=(\tilde{x}\gamma,D(\tilde{x}\gamma))= 
(\tilde{x}\gamma,\gamma^{-1}D(\tilde{x}))=(\tilde{x},D(\tilde{x}))\gamma\;.$$
Thus $(\text{id} ,D)$ defines a section $s:M\ar N$, and $N$ is trivial as
principal $G$-bundle over $M$. Clearly $s$ is transverse to $\calG $, and
$s^{\ast}\calG =\calF$.

Let $\overline{D}:\widetilde{M}\times G\ar G$ be defined by
$\overline{D}(\tilde{x},g)=g^{-1}D(\tilde{x})$. Such 
$\overline{D}$ is $\pi_1(M)$-invariant:
$$\overline{D}((\tilde{x},g)a)=\overline{D}(\tilde{x}a,h(a)^{-1}g)=
g^{-1}h(a)D(\tilde{x}a)=g^{-1}D(\tilde{x})\;.$$
So $\overline{D}$ defines a map $D_N:N\ar G$.
Clearly $D_Ns=\text{const}_e$, where $e$ is the identity
element in $G$. Moreover $D_N$ is $G$-anti-equivariant:
$$D_N([\tilde{x},g]g')=D_N([\tilde{x},gg'])=
(gg')^{-1}D(\tilde{x})=(g')^{-1}D_N([\tilde{x},g])\;.$$
Therefore $D_N$ is the composite of the second factor projection of the 
trivialization of $N\ar M$ defined by $s$ and the
inversion map on $G$.                       

Let $\widetilde{\calF}$ also denote the foliation on $N$ defined by the lifting of
\calF\  to all the leaves of $\calG $. $\widetilde{\calF}$ is a subfoliation 
of $\calG $ whose leaves are the intersections of the leaves of $\calG $
with all the translations of $s(M)$.

Let $\nu\subset T\calG $ be a $G$-invariant subbundle so that $ T\calG =\nu\oplus T\widetilde{\calF}$. We
get
$$\bigwedge T\calG ^{\ast}=\bigwedge\nu^{\ast}\otimes\bigwedge T\widetilde\calF^\ast\;,$$
and thus there is a bigrading of $\Omega=\Omega(\calG )$ defined by 
$$
\Omega^{u,v}=\cinf\left(\bigwedge^u\nu^{\ast}\otimes\bigwedge^v T\widetilde\calF^{\ast}\right)\;,
\quad u,v\in\Z\;.
$$
For simplicity, $d_{\calG }$ will be denoted by $d$. There is a decomposition of $d$ 
as sum of bihomogeneous components, $d=d_{0,1}+d_{1,0}+d_{2,-1}$, where each double
subindex denotes the corresponding bidegree. From $d^2=0$ we get
\begin{equation}
d_{0,1}^2=d_{2,-1}^2=d_{0,1}d_{1,0}+d_{1,0}d_{0,1}=0\;,
\label{e:di,j 1} \end{equation}
\begin{equation}\label{e:di,j 2} 
d_{1,0}d_{2,-1}+d_{2,-1}d_{1,0}=d_{1,0}^2+d_{0,1}d_{2,-1}+d_{2,-1}d_{0,1}=0\;.
\end{equation}     
The decreasing filtration of $(\Omega,d)$ by the differential ideals
\begin{equation}\label{e:filtration} F^l=\Omega^{l,\cdot}\wedge\Omega\;,
\end{equation}
depends only on $\left(\calG ,\widetilde{\calF}\right)$; it could be defined without
using $\nu$. So we get a spectral sequence $(E_i,d_i)$ which converges to $H^\cdot(\calG )$. As
for the spectral sequence of a foliation (see e.g. \cite{Alv1}), in this case there are canonical
identities  \begin{equation}\label{e:E0,E1}
(E_0,d_0)\equiv(\Omega,d_{0,1})\;,\ (E_1,d_1)\equiv(H(\Omega,d_{0,1}),d_{1,0\ast})\;.
\end{equation}
The $\cinf $ topology on the space of differential forms induces a topology on
each $E_i$ which is not Hausdorff in general.

At each $z\in N$ we have 
$$\begin{CD}
D_{N\ast}:\nu_z @>\cong>> T_{D_N(z)}G\;.
\end{CD}$$ 
So for each $X\in\g$ there is a well defined vector
field $X^{\nu}\in\cinf(\nu)$ which is $D_N$-projectable and such that
$D_{N\ast}X^{\nu}=X$. Such $X^{\nu}$ is $G$-invariant since $X^{\nu}_zg\in\nu_{zg}$ and
$$D_{N\ast}(X^{\nu}_zg)=
g^{-1}D_{N\ast}X^{\nu}_z=g^{-1}X_{D_N(z)}=X_{g^{-1}D_N(z)}=X_{D_N(zg)}\;.$$
Let $\theta_X$ and $i_X$ respectively denote the Lie derivative and interior
product on $\Omega$ with respect to $X^{\nu}$. (We are considering $\theta_X$ and
$i_X$ as operators on the leaves of $\calG $, but preserving
smoothness on $N$.) By comparing bidegrees in the usual formulas that relate Lie
derivatives, interior products and the de~Rham derivative, we easily get
$$d_{0,1}i_X+i_Xd_{0,1}=0\;,$$
$$(\theta_X)_{0,0}d_{0,1}=d_{0,1}(\theta_X)_{0,0}\;,$$
$$i_{[X,Y]}=(\theta_X)_{0,0}i_Y-i_Y(\theta_X)_{0,0}\;,$$
$$(\theta_X)_{0,0}=d_{1,0}i_X+i_Xd_{1,0}\;,$$
$$(\theta_{[X,Y]})_{0,0}=(\theta_X\theta_Y-\theta_Y\theta_X)_{0,0}-
d_{0,1}i_{\Xi(X\wedge Y)}-i_{\Xi(X\wedge Y)}d_{0,1}\;,$$
where $\Xi:\bigwedge^2\g \ar \cinf\left( T\widetilde{\calF}\right)$ is given by
$$\Xi(X\wedge Y)=[X^{\nu},Y^{\nu}]-[X,Y]^{\nu}\;.$$
Therefore we get the operation $(\g ,i_1,\theta_1,E_1,d_1)$, where
$i_{1X}\equiv i_{X\ast}$ and $\theta_{1X}\equiv(\theta_X)_{0,0\ast}$ according to
\eqref{e:E0,E1}, and the algebraic connection $D_N^{\ast} : {\g}^{\ast}\ar
E_1^{1,0}\subset \Omega^{1,0}$ \cite{GHV:III}. Then $$E_2^{u,v}\cong H^u(\g ;\ \theta_1:\g \ar
\End(E_1^{0,v}))\;.$$

Let $\phi:N\times\g \ar N$ be defined by $\phi(z,X)=
X_1^{\nu}(z)$, where $X_t^{\nu}$ denotes the uniparametric group of
transformations defined by $X^{\nu}$, considered as group of transformations of the leaves
of $\calG $ preserving smoothness on $N$. Then the following
diagram is commutative
$$\begin{CD}
N\times\g          @>\phi>>            N\\
@V{D_N\times\exp}VV                    @VV{D_N}V\\
G\times G          @>>>                G\;,
\end{CD}$$
where the lowest map denotes the operation on $G$. (This follows because
$X_t=R_{\exp(tX)}$ for all $X\in\g$.)

\subsection{Tensor product decomposition of $E_2$ when $\g$ is compact semisimple}
From now on suppose $\g$ is compact semisimple, and thus $G$ is compact
\cite{Poor}.

\begin{thm}\label{thm:E2}
With the above notations, $$E_2^{u,v}\cong H^u(\g )\otimes E_2^{0,v}=
H^u(\g )\otimes (E_1^{0,v})_{\theta_1=0}\;.$$
\end{thm}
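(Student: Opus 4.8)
The plan is to read off the whole statement from the already-established identity
$$
E_2^{u,v}\cong H^u\bigl(\g;\ \theta_1:\g\ar\End(E_1^{0,v})\bigr),
$$
so that, writing $V=E_1^{0,v}$ for the coefficient module, the theorem becomes the purely algebraic assertion $H^u(\g;V)\cong H^u(\g)\otimes V^{\g}$ with $V^{\g}=V_{\theta_1=0}$. Note that the case $u=0$ of the displayed identity already gives $E_2^{0,v}=H^0(\g;V)=V_{\theta_1=0}$, which is the second equality in the statement. Thus everything reduces to the standard decomposition of the Lie algebra cohomology of a compact semisimple $\g$ with coefficients in a (here infinite dimensional) module.

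First I would upgrade the infinitesimal action $\theta_1$ to a genuine representation of the compact connected group $G$ on $V$. This is exactly the content of the commutative square relating $\phi(z,X)=X_1^{\nu}(z)$, $D_N$ and $\exp$: the flows $X_t^{\nu}$ of the projectable $G$-invariant fields $X^{\nu}$ are complete and, because $D_N\circ X_t^{\nu}$ corresponds to right translation by $\exp(tX)$ on the compact group $G$, they integrate to an action of $G$ on the leaves of $\calG$ that preserves $d_{0,1}$ and hence descends to $E_1$. The derived action of this $G$-representation on $E_1^{0,v}$ is $\theta_1$, and since $G$ is connected one has $V^{G}=V^{\g}=V_{\theta_1=0}$.

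Next, using compactness of $G$, I would average over $G$ with normalized Haar measure to obtain a continuous $G$-equivariant projection $P\colon V\ar V^{\g}$, and set $W=\ker P$, so that $V=V^{\g}\oplus W$ is a direct sum of $\g$-modules with $W^{\g}=0$. Since the Chevalley--Eilenberg complex $\Lambda^{\cdot}\g^{\ast}\otimes(-)$ is additive in the coefficients, this splits the cohomology as $H^u(\g;V)=H^u(\g;V^{\g})\oplus H^u(\g;W)$. On the first summand $\g$ acts trivially, so the complex is $(\Lambda^{\cdot}\g^{\ast},d)\otimes V^{\g}$ and $H^u(\g;V^{\g})=H^u(\g)\otimes V^{\g}$, which is precisely the asserted term. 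So the theorem comes down to the vanishing $H^u(\g;W)=0$. Here I would use the Casimir element $\operatorname{Cas}=\sum_a X_aX^a$ for bases dual with respect to the Killing form: it is central in $U(\g)$, so it acts on $H^{\cdot}(\g;W)=\operatorname{Ext}^{\cdot}_{U(\g)}(\R,W)$ both through the coefficients $W$ and through the trivial module $\R$, and these two actions agree; since $\operatorname{Cas}$ lies in the augmentation ideal, its action through $\R$ is multiplication by $\varepsilon(\operatorname{Cas})=0$, whereas by Peter--Weyl it acts on $W$ as a positive operator bounded below by the smallest nonzero Casimir eigenvalue, hence invertibly. The two facts force $H^u(\g;W)=0$, completing the argument.

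The main obstacle I anticipate is analytic rather than algebraic: the space $E_1^{0,v}$ is infinite dimensional and, as already remarked, its topology need not be Hausdorff, so the two steps that leave pure algebra---the Haar averaging that produces $P$ together with the split $V=V^{\g}\oplus W$, and the invertibility of $\operatorname{Cas}$ on $W$---must be justified in this topological setting. I expect compactness of $G$ to be exactly what makes both go through: Haar averaging of a continuous action of a compact group converges, and the Peter--Weyl isotypic decomposition supplies both the projection $P$ and the uniform lower bound on the Casimir eigenvalues needed for its inverse to be continuous. Carefully controlling these convergence and continuity points, rather than any new algebraic input, is where the real work lies.
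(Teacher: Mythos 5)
Your reduction of the statement to the algebraic claim $H^u(\g;V)\cong H^u(\g)\otimes V_{\theta_1=0}$ for $V=E_1^{0,v}$ is correct, and your reading of the second equality as the case $u=0$ of the identity $E_2^{u,v}\cong H^u(\g;E_1^{0,v})$ is also fine. The fatal gap is your very first substantive step: the claim that the flows $X_t^{\nu}$ ``integrate to an action of $G$'' on the leaves of $\calG$, hence on $E_1$. The commutative square relating $\phi$, $D_N$ and $\exp$ only says that $\phi_{tX}$ \emph{covers} right translation by $\exp(tX)$; it does not say that the flows compose according to the group law of $G$. In fact they do not: the assignment $X\mapsto X^{\nu}$ is not a Lie algebra homomorphism, its defect being exactly the curvature term $\Xi(X\wedge Y)=[X^{\nu},Y^{\nu}]-[X,Y]^{\nu}\in\cinf(T\widetilde{\calF})$ introduced in the paper, which is nonzero in general. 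Consequently $\phi_Y\phi_X\neq\phi_Z$ when $\exp(X)\exp(Y)=\exp(Z)$; what is true, and requires proof, is that $\phi_Y\phi_X$ and $\phi_Z$ are $\widetilde{\calF}$-integrably homotopic (Lemmas \ref{lemma:calHj} and \ref{lemma:Hj}), so that the induced operators agree only after passing to $d_{0,1}$-cohomology, via the homotopy operators $k_{j,\xi}$ and $\Psi_X$. Establishing this, and deducing that the averaging operator $\rho(\alpha)=\int_{\overline{B^{\ast}}}\phi_X^{\ast}\alpha\,\Delta^{\ast}(X)$ induces a projection $\rho_1$ of $E_1$ onto $(E_1)_{\theta_1=0}$ with $\rho_{1\ast}:E_2\cong H((E_1)_{\theta_1=0})$ (Lemma \ref{lemma:E2iso}), is precisely the technical core of the paper's proof---the step you compress into one unjustified sentence.

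There is a second, subordinate problem. Even granting a $G$-action on $V=E_1^{0,v}$, your Haar averaging and your Casimir inversion both take place inside $E_1$, whose topology the paper explicitly notes is non-Hausdorff in general; in a non-Hausdorff (hence non-complete) topological vector space there is no usable theory of vector-valued integration or of convergent isotypic expansions, so neither your projection $P$ nor the inverse of the Casimir operator is actually defined by the arguments you give, as you yourself half-suspect in your closing paragraph. The paper's proof is arranged to avoid exactly this: the averaging $\rho$ is performed on $\Omega=\Omega(\calG)$, a genuine Fr\'echet space since $N$ is a closed manifold, and only afterwards does one descend to $E_1$ and $E_2$; the final splitting $H((E_1)_{\theta_1=0})\cong H(\g)\otimes(E_1^{0,\cdot})_{\theta_1=0}$ is then obtained from the purely algebraic theory of operations $(\g,i_1,\theta_1,E_1,d_1)$ with algebraic connection in \cite{GHV:III}, with no further analysis needed. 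So your overall strategy (project onto invariants by averaging over the compact group, then split off $H(\g)$) is in the same spirit as the paper's, but the two points you treat as routine---existence of the $G$-action and the analytic legitimacy of averaging---are respectively unjustified (indeed false as stated) and unavailable in this topology; both are what the paper's integrable-homotopy machinery and form-level averaging are there to resolve.
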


The result follows with the same type of arguments as in those given in
Sections~2 and~3 of \cite{Alv5} to prove Theorem 3.5 in \cite{Alv5}. We
will indicate the main steps in the proof because some of them will be needed later.

Consider the canonical biinvariant metric on $G$ \cite[Chapter 6]{Poor}, and
let $C\subset G$ and $C^{\ast}\subset\g$ be the cut locus and tangential cut
locus corresponding to the identity element $e\in G$. Let $B^{\ast}$ be the
radial domain in $\g$ bounded by $C^{\ast}$, and let $B=
\exp(B^{\ast})$. From the general properties of the cut locus
we have $C=\partial B=G\backslash B$,
$\exp:B^{\ast}\ar B$ is a diffeomorphism, $C$ and $C^{\ast}$ have Lebesgue
measure zero, and $\overline{B^{\ast}}$ is compact (since so is $G$)
\cite{Kobayashi,Klingenberg}. Consider the compact space 
$$F=\{ (X,Y,Z)\in\overline{B^{\ast}}^3:\ \exp(X)\,\exp(Y)=
\exp(Z)\}\subset\g^3\;,$$
and for each $X\in\overline{B^{\ast}}$ the compact slice
$$F_X=\{ (Y,Z)\in\g^2:\ (X,Y,Z)\in F\}\subset\g^2\;.$$
Smoothness on $F$ and $F_X$ will refer to the smoothness obtained by considering these
spaces as subspaces of $\g^3$ and $\g^2$, respectively.

Let $\iota:\g^2\rightarrow\g^2$ be the involution $(Y,Z)
\mapsto (Z,Y)$. For $a=\exp(X)$ we also have the smooth map ${\mathcal J}_X:
B\cap L_a^{-1}B\ar F_X$ given by ${\mathcal J}_X(g)= (\log(g),\log(ag))$, where 
$\log=\exp^{-1}:B\ar B^{\ast}$. Let $W_X={\mathcal J}_X(B\cap L_a^{-1}B) \subset
F_X$.

\begin{lem} [{\cite[Proposition 2.2]{Alv5}}]\label{lemma:JX}We have:
\begin{enumerate}
\item[(i)] $W_X$ is open in $F_X$ and ${\mathcal J}_X: B\cap L_a^{-1}B \ar W_X$ 
is a diffeomorphism.
\item[(ii)] $\iota(F_X)=F_{-X}$, and the diagram
$$\begin{CD}
B\cap L_a^{-1}B           @>{\mathcal J}_X>>           F_X\\
@V{L_a}VV                                          @VV{\iota}V\\
B\cap L_aB                @>{\mathcal J}_{-X}>>        F_{-X}
\end{CD}$$
is commutative.
\end{enumerate}
\end{lem}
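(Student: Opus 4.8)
The plan is to exploit the hypothesis that $\exp\colon B^\ast\to B$ is a diffeomorphism, so that $\log=(\exp|_{B^\ast})^{-1}$ is smooth; this makes ${\mathcal J}_X$ visibly smooth and reduces the whole lemma to bookkeeping with $\exp$, $\log$ and left translations. First I would record that ${\mathcal J}_X$ is well defined: for $g\in B\cap L_a^{-1}B$ one has $g,ag\in B$, hence $Y=\log g$ and $Z=\log(ag)$ lie in $B^\ast$, and $\exp(X)\exp(Y)=ag=\exp(Z)$ shows $(X,Y,Z)\in F$, i.e. $(Y,Z)\in F_X$. The key structural observation for part~(i) is the identification
$$W_X=F_X\cap(B^\ast\times B^\ast).$$
The inclusion $\subseteq$ is immediate from the previous line; conversely, if $(Y,Z)\in F_X$ with $Y,Z\in B^\ast$, then $g=\exp(Y)\in B$ and $ag=\exp(X)\exp(Y)=\exp(Z)\in B$, so $g\in B\cap L_a^{-1}B$ and ${\mathcal J}_X(g)=(Y,Z)$.

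Granting this, part~(i) follows quickly. Since $B^\ast\times B^\ast$ is open in $\g\times\g$, the displayed equality exhibits $W_X$ as relatively open in $F_X$. For the diffeomorphism claim I would describe $W_X$ explicitly as the graph $\{(Y,\log(a\exp(Y)))\}$ of the smooth map $Y\mapsto\log(a\exp(Y))$ over the open set $\{\,Y\in B^\ast:a\exp(Y)\in B\,\}\subset\g$; this realizes $W_X$ as an embedded submanifold of $\g\times\g$. Then ${\mathcal J}_X$ is the composite of the diffeomorphism $\log\colon B\cap L_a^{-1}B\to\{\,Y\in B^\ast:a\exp(Y)\in B\,\}$ with the graph parametrization, hence a diffeomorphism onto $W_X$; equivalently, ${\mathcal J}_X$ is a smooth injection whose inverse is the restriction to $W_X$ of the smooth map $(Y,Z)\mapsto\exp(Y)$.

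For part~(ii) I would first check that the tangential cut locus is symmetric, $-\overline{B^\ast}=\overline{B^\ast}$ and $-B^\ast=B^\ast$: inversion $g\mapsto g^{-1}$ is an isometry of the biinvariant metric fixing $e$ whose differential there is $-\mathrm{id}$, so it carries $C$ to $C$ and $C^\ast$ to $C^\ast$. Combining this with $\exp(X)^{-1}=\exp(-X)$, the defining relation $\exp(X)\exp(Y)=\exp(Z)$ is equivalent to $\exp(-X)\exp(Z)=\exp(Y)$; reading the latter for the triple $(-X,Z,Y)$ gives $(Z,Y)\in F_{-X}$, so $\iota(F_X)\subseteq F_{-X}$, and applying the same to $-X$ together with $\iota^2=\mathrm{id}$ yields the equality $\iota(F_X)=F_{-X}$. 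Finally I would verify commutativity of the square by direct substitution: for $g\in B\cap L_a^{-1}B$, the upper-right route gives $\iota{\mathcal J}_X(g)=(\log(ag),\log g)$, while the lower route gives ${\mathcal J}_{-X}(ag)=(\log(ag),\log(a^{-1}ag))=(\log(ag),\log g)$, using $a^{-1}=\exp(-X)$ and that $L_a$ maps $B\cap L_a^{-1}B$ onto $B\cap L_aB=B\cap L_{\exp(-X)}^{-1}B$, the domain of ${\mathcal J}_{-X}$.

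The computations are all elementary once $\log$ is available, so the only genuinely delicate point is the foundational input packaged into the statement's preamble: that $\exp\colon B^\ast\to B$ is a diffeomorphism and that $C,C^\ast$ have measure zero with $\overline{B^\ast}$ compact. Granting those cut-locus facts (standard for a compact group with its biinvariant metric), the main thing to handle with care is the meaning of ``diffeomorphism'' for the subspace $W_X\subset\g^2$, which is why I would make the graph description explicit rather than merely assert smoothness of the set-theoretic inverse.
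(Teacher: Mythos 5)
Your proof is correct, but there is nothing in the paper to compare it against: the lemma is imported verbatim from \cite[Proposition~2.2]{Alv5} and the paper gives no argument for it, so your write-up serves as a self-contained substitute. Your three ingredients are exactly the points that need care. First, the identification $W_X=F_X\cap(B^\ast\times B^\ast)$ gives relative openness immediately. Second, the graph description of $W_X$ over the open set $\{Y\in B^\ast:\ a\exp(Y)\in B\}$ correctly resolves the only delicate issue in (i): the paper equips $F_X$ only with the subspace notion of smoothness, and your remark that the set-theoretic inverse of ${\mathcal J}_X$ is the restriction to $W_X$ of the globally smooth map $(Y,Z)\mapsto\exp(Y)$ is precisely what makes ``diffeomorphism'' meaningful there. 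Third, the symmetry $-\overline{B^\ast}=\overline{B^\ast}$ obtained from the inversion isometry (differential $-\mathrm{id}$ at $e$, carrying $C^\ast$ to $C^\ast$) is genuinely needed in (ii), not only for $\iota(F_X)\subseteq F_{-X}$ but even for the slice $F_{-X}$ to be defined, since slices are only declared for elements of $\overline{B^\ast}$; you flagged and used this correctly. The remaining computations --- well-definedness of ${\mathcal J}_X$, both routes around the square, and the identification $L_a(B\cap L_a^{-1}B)=B\cap L_aB=B\cap L_{\exp(-X)}^{-1}B$ --- are routine and carried out accurately.
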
    

For $X,Y\in\overline{B^{\ast}}$ let $W_{X,Y}= {\mathcal J}_X(B
\cap L_a^{-1}B\cap L_b^{-1}B)\subset F_X$, where $a=\exp(X)$ and $b=\exp(Y)$. We
have the diffeomorphism ${\mathcal J}_{X,Y}={\mathcal J}_Y{\mathcal J}_X^{-1}: W_{X,Y}
\ar W_{Y,X}$.

Let $\Delta$ be the unique
biinvariant volume form on $G$ such that $\int_G\Delta=1$, which defines a Haar
measure $\mu$ on $G$. 
Then for each $X\in\overline{B^{\ast}}$ let $\mu_X$ be the Borel measure on $F_X$,
concentrated on $W_X$, where it corresponds to $\mu$ by ${\mathcal J}_X$.

\begin{cor}[{\cite[Proposition 2.3]{Alv5}}]\label{cor:mu}
We have:
\begin{enumerate}
\item[(i)] $\mu_X(F_X)=\mu_X(W_X)=\mu_X(W_{X,Y})=\mu(B\cap L_a^{-1}B\cap
L_b^{-1}B)\\ =\mu(B\cap L_a^{-1}B)=\mu(G)=1$
\item[(ii)] $\mu_X$ corresponds to $\mu_{-X}$ by $\iota:F_X\ar F_{-X}$.
\item[(iii)] $\mu_X$ corresponds to $\mu_Y$ by ${\mathcal J}_{X,Y}:W_{X,Y}\ar 
W_{Y,X}$.
\end{enumerate}
\end{cor}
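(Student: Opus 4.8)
The plan is to deduce all three assertions from Lemma~\ref{lemma:JX} together with two elementary facts about the biinvariant geometry of $G$: that the cut locus is $\mu$-null, and that left translations preserve the Haar measure $\mu$. Since $\exp:B^\ast\ar B$ is a diffeomorphism and $C=G\setminus B$ has Lebesgue measure zero, one has $\mu(B)=\mu(G)=1$. For $a=\exp(X)$ the left translation $L_a$ is $\mu$-preserving, so $L_a^{-1}C$ is again null and hence $\mu(B\cap L_a^{-1}B)=1$; running the same argument with a second element $b=\exp(Y)$ gives $\mu(B\cap L_a^{-1}B\cap L_b^{-1}B)=1$. These nullity and invariance statements are the only genuine inputs, and both are standard.

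First I would settle (i). By construction $\mu_X$ is the push-forward of $\mu$ under $\mathcal{J}_X$, concentrated on $W_X$; so by Lemma~\ref{lemma:JX}(i), which makes $\mathcal{J}_X:B\cap L_a^{-1}B\ar W_X$ a diffeomorphism, $\mu_X(W_X)=\mu(B\cap L_a^{-1}B)=1$, and therefore $\mu_X(F_X)=1$ as well. Since $W_{X,Y}=\mathcal{J}_X(B\cap L_a^{-1}B\cap L_b^{-1}B)$ by definition, the same push-forward identity yields $\mu_X(W_{X,Y})=\mu(B\cap L_a^{-1}B\cap L_b^{-1}B)=1$, and concatenating these equalities gives the chain in (i).

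For (ii) I would push $\mu$ through the commutative square of Lemma~\ref{lemma:JX}(ii), i.e. $\iota\circ\mathcal{J}_X=\mathcal{J}_{-X}\circ L_a$ on $B\cap L_a^{-1}B$. Using $(L_a)_\ast\mu=\mu$ this gives
$$\iota_\ast\mu_X=\iota_\ast(\mathcal{J}_X)_\ast\mu=(\mathcal{J}_{-X})_\ast(L_a)_\ast\mu=(\mathcal{J}_{-X})_\ast\mu=\mu_{-X}\;,$$
which is exactly (ii). For (iii), note that $W_{X,Y}$ and $W_{Y,X}$ are the images of the \emph{same} triple intersection $B\cap L_a^{-1}B\cap L_b^{-1}B$ under $\mathcal{J}_X$ and $\mathcal{J}_Y$, and that $\mathcal{J}_{X,Y}=\mathcal{J}_Y\circ\mathcal{J}_X^{-1}$. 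Hence $(\mathcal{J}_{X,Y})_\ast$ carries the restriction of $\mu_X$ to $W_{X,Y}$—which is $(\mathcal{J}_X)_\ast$ of $\mu$ on the triple intersection—to $(\mathcal{J}_Y)_\ast$ of the same measure, namely the restriction of $\mu_Y$ to $W_{Y,X}$, giving (iii).

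The whole argument is push-forward bookkeeping once Lemma~\ref{lemma:JX} is granted, the only substantive content being that nullity of the cut locus and left-invariance of $\mu$ force each intersection $B\cap L_a^{-1}B\cap\cdots$ to carry full mass $1$. The step needing the most care is the domain matching in (iii): one must verify that under $\mathcal{J}_X$ the subset $W_{X,Y}\subset W_X$ corresponds precisely to $B\cap L_a^{-1}B\cap L_b^{-1}B$ (and symmetrically for $W_{Y,X}$ under $\mathcal{J}_Y$), so that the two restricted measures really are push-forwards of one and the same measure. I expect this modest bit of domain tracking to be the main obstacle.
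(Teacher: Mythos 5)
Your proof is correct. The paper itself gives no proof of this statement---it quotes it directly from \cite[Proposition 2.3]{Alv5}---but your push-forward derivation from Lemma~\ref{lemma:JX}, whose only substantive inputs are the $\mu$-nullity of the cut locus and the invariance of $\mu$ under translations, is precisely the argument the ``Corollary'' label indicates; moreover, the domain-matching issue you flag in (iii) is immediate, since $W_{X,Y}$ is \emph{defined} as ${\mathcal J}_X(B\cap L_a^{-1}B\cap L_b^{-1}B)$ and ${\mathcal J}_X$ is injective by Lemma~\ref{lemma:JX}(i), so ${\mathcal J}_X^{-1}(W_{X,Y})=B\cap L_a^{-1}B\cap L_b^{-1}B$ exactly (and symmetrically for ${\mathcal J}_Y$), making the two restricted measures push-forwards of the same measure.
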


Let $I=[0,1]$, and define continuous maps $\sigma,\eta:F\times I\ar G$ by
setting
$$\sigma(\xi,t)=\exp(tZ)\;,$$
$$\eta(\xi,t)=
\begin{cases}
\exp(2tX) & \text{if $t\in I_1=[0,1/2]$} \\
\exp(X)\,\exp((2t-1)Y) & \text{if $t\in I_2=[1/2,1]$}\;,
\end{cases}$$
where $\xi=(X,Y,Z)\in F$. The map $\sigma$ is smooth, and so are the restrictions of
$\eta$ to each $F\times I_i$ ($i=1,2$).

\begin{lem} [{\cite[page 178]{Alv5}}]\label{lemma:calHj} 
There is a finite open cover $Q_1,\ldots,Q_k$ of $F$, and continuous
maps $\calH_j:Q_j\times I\times I\ar G$ with smooth restrictions to each $Q_j\times I_i\times
I$, $i=1,2$, $j=1,\ldots,k$, so that 
$$\calH_j(\cdot,\cdot,0)=\sigma|_{Q_j\times I}\;,\quad
\calH_j(\cdot,\cdot,1)=\eta|_{Q_j\times I}\;,$$ 
$$\calH_j(\xi,0,s)=e\quad\text{for
all}\quad s\in I\quad\text{and}\quad\xi\in Q_j\;,$$
$$\calH_j(\xi,1,s)=\exp(Z)\quad\text{for all}\quad s\in I\quad\text{and}\quad\xi=
(X,Y,Z)\in Q_j\;.$$  \end{lem}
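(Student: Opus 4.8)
The plan is to realize each $\calH_j$ as a \emph{geodesic homotopy} for the canonical biinvariant metric on $G$, exploiting that both $\sigma(\xi,\cdot)$ and $\eta(\xi,\cdot)$ are paths from $e$ to $\exp(Z)$. First I would isolate the pointwise existence statement. For $\xi=(X,Y,Z)\in F$ we have $\sigma(\xi,1)=\exp Z$ and $\eta(\xi,1)=\exp X\,\exp Y=\exp Z$ (this is the defining relation of $F$), while $\sigma(\xi,0)=\eta(\xi,0)=e$; since $G$ is compact semisimple and simply connected, $\pi_1(G)=0$, so the two paths are homotopic rel endpoints. Thus existence of \emph{some} homotopy is free, and the whole content of the lemma is to make it depend continuously on $\xi$ and to control smoothness. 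The corner of $\eta$ at $t=1/2$ is what forces one to treat $I_1$ and $I_2$ separately, i.e. to ask only for smoothness on each $Q_j\times I_i\times I$.

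For the construction itself I would use that, off the cut locus, any two points of $G$ are joined by a unique minimal geodesic depending smoothly on the pair of endpoints. Where this applies, set $\calH_j(\xi,t,s)$ equal to the point at parameter $s$ on the minimal geodesic from $\sigma(\xi,t)$ to $\eta(\xi,t)$. This choice has three virtues built in: $\calH_j(\cdot,\cdot,0)=\sigma$ and $\calH_j(\cdot,\cdot,1)=\eta$ by definition; the prescribed values $\calH_j(\xi,0,s)=e$ and $\calH_j(\xi,1,s)=\exp Z$ hold \emph{automatically}, because at $t\in\{0,1\}$ the two endpoints coincide and the connecting geodesic is constant; and the map inherits smoothness from $\sigma$ and from $\eta$, the only obstruction to smoothness being the corner at $t=1/2$, which is precisely why one restricts to $I_1$ and $I_2$.

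The main obstacle, and the reason a single global smooth homotopy cannot be expected, is the cut locus $C$: the geodesic homotopy is defined and smooth only where $\eta(\xi,t)$ avoids the cut locus of $\sigma(\xi,t)$, and there is no reason for this to hold for every $(\xi,t)$. This, rather than any failure of pointwise existence, is what forces the finite cover $Q_1,\ldots,Q_k$. To circumvent it I would use the compactness of $F\times I$ and uniform continuity of $\sigma,\eta$ to interpose, over a sufficiently small $Q_j$, a finite chain of intermediate paths $\gamma^0=\sigma,\gamma^1,\ldots,\gamma^N=\eta$ — built as broken one-parameter-subgroup paths in the data $X,Y,Z$, each running from $e$ to $\exp Z$ and varying smoothly with $\xi$ over $Q_j$ — whose consecutive members stay pointwise within the injectivity radius; the consecutive geodesic homotopies are then smooth, and concatenating them in the $s$-variable (after an affine reparametrization of $[0,1]$) yields $\calH_j$ on $Q_j\times I_i\times I$ with all endpoint and boundary values preserved at every stage. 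The delicate points I expect to work hardest on are exactly the ones governing the size of the cover: choosing the intermediate paths and the integer $N$ locally constant so that the whole chain depends smoothly on $\xi$, and checking that the pieces glued across $t=1/2$ and across consecutive $s$-subintervals match continuously. This cut-locus bookkeeping is the analogue, for the present lemma, of the measure-theoretic handling of $C$ and $C^{\ast}$ already used in this section for the measures $\mu_X$, and it parallels the arguments of \cite{Alv5}.
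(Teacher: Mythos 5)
The paper never proves this lemma: it is quoted verbatim from \cite[page 178]{Alv5}, so there is no internal argument to compare with and your proposal must stand on its own. Its skeleton is the right one --- pointwise existence is free from $\pi_1(G)=0$, the whole content is smooth local dependence on $\xi$ plus compactness of $F$, and the corner of $\eta$ at $t=1/2$ is what forces the splitting into $I_1,I_2$ --- but the step that carries all the weight is not actually supplied, and the one concrete device you offer for it fails. You propose to bridge $\sigma$ and $\eta$ over a small $Q_j$ by a chain of intermediate paths ``built as broken one-parameter-subgroup paths in the data $X,Y,Z$''. Any such path from $e$ to $\exp(Z)$ forces you to solve $\exp(W)=\exp(-\lambda X)\exp(Z)$ for $W$ depending continuously on $(\xi,\lambda)$, i.e.\ to take $\log$ of points that wander over large parts of $G$; these points can lie in the cut locus $C$ already at the center point of the chart (for instance at $\lambda=1$ one gets $\exp(Y)$, and $Y$ may lie in $C^{\ast}$ because $F$ is built from the \emph{closed} ball $\overline{B^{\ast}}$), so shrinking $Q_j$ cannot cure it: the cut-locus obstruction you set out to avoid reappears inside the proposed remedy. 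The missing idea is to anchor the construction at one point $\xi_0=(X_0,Y_0,Z_0)\in F$ rather than build paths from the group data: take a single homotopy $H_0$ rel endpoints between $\sigma(\xi_0,\cdot)$ and $\eta(\xi_0,\cdot)$ (it exists since $\pi_1(G)=0$, and can be smoothed rel the boundary so as to be smooth on each $I_i\times I$), and transport it to nearby $\xi$ by the right-translation correction $(\xi,t,s)\mapsto H_0(t,s)\exp\bigl((1-s)\log a(\xi,t)+s\log b(\xi,t)\bigr)$, where $a(\xi,t)=\sigma(\xi_0,t)^{-1}\sigma(\xi,t)$ and $b(\xi,t)=\eta(\xi_0,t)^{-1}\eta(\xi,t)$. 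This uses $\log$ only near $e$ (no cut-locus issue), is smooth on $Q_j\times I_i\times I$, and satisfies all four boundary conditions: at $s=0,1$ it returns $\sigma(\xi,\cdot)$ and $\eta(\xi,\cdot)$, at $t=0$ it is $e$, and at $t=1$ one has $a(\xi,1)=b(\xi,1)=\exp(-Z_0)\exp(Z)$, so the value is $\exp(Z_0)\exp(-Z_0)\exp(Z)=\exp(Z)$. Compactness of $F$ then yields the finite cover.

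A second, smaller but genuine, defect: even granting the chain, you glue the consecutive geodesic homotopies in the $s$-variable ``after an affine reparametrization of $[0,1]$''. Affine reparametrization leaves corners at the junction values of $s$, so the concatenation is only piecewise smooth in $s$; but the lemma demands smoothness on all of $Q_j\times I_i\times I$ --- the only breaks permitted are in $t$, at $t=1/2$, not in $s$. To concatenate in $s$ you must first reparametrize each stage by a function that is constant near its endpoints, or else avoid concatenation altogether as in the transport argument above.
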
 

\begin{lem} \label{lemma:Hj}
For each $j=1,\ldots,k$ there exists a unique continuous map 
$$H_j:N\times Q_j\times I\times I\ar N$$ 
with smooth restrictions to each $N\times Q_j\times I_i\times I$, $i=1,2$, 
such that 
\begin{enumerate}
\item[(i)] $D_NH_j(z,\xi,t,s)=D_N(z)\,\calH_j(\xi,t,s)$, 
\item[(ii)] $H_j(z,\xi,0,s)=z$,
\item[(iii)] $(d/dt)H_j(z,\xi,t,s)\in\nu$ for $t\neq 1/2$.
\end{enumerate}
Moreover for $\xi=(X,Y,Z)\in Q_j$ we have
\begin{enumerate}
\item[(iv)] $H_j(\cdot,\xi,1,0)=\phi_Z$,
\item[(v)] $H_j(\cdot,\xi,1,1)=\phi_Y\phi_X$
\item[(vi)] $H_j(z,\xi,1,s)\in D_N^{-1}(D(z)\,\exp(Z))$ for all $z\in N$ and
all $s\in I$. \end{enumerate}
\end{lem}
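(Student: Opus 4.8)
The plan is to recognize conditions (i)--(iii) as the defining properties of a \emph{horizontal lift} with respect to the subbundle $\nu$, viewed as a connection for the submersion $D_N$ along the leaves of $\calG$. Indeed, since $D_{N\ast}$ restricts to an isomorphism $\nu_w\cong T_{D_N(w)}G$ at every $w\in N$, conditions (i) and (iii) say that, for fixed $(z,\xi,s)$, the curve $t\mapsto H_j(z,\xi,t,s)$ must be tangent to $\nu$ (for $t\neq 1/2$) and project under $D_N$ onto $t\mapsto D_N(z)\,\calH_j(\xi,t,s)$, while (ii) fixes its initial value at $z$. First I would set $c(t)=\calH_j(\xi,t,s)$ and write its velocity as $\dot c(t)=(L_{c(t)})_\ast\lambda(t)$ for a $\g$-valued function $\lambda$ depending smoothly on $(\xi,t,s)$ on each $Q_j\times I_i\times I$. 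A short computation gives $\frac{d}{dt}\bigl(D_N(z)c(t)\bigr)=(L_{D_N(z)c(t)})_\ast\lambda(t)$, so the required curve is exactly the integral curve of the time-dependent vector field $w\mapsto\lambda(t)^\nu(w)$ issuing from $z$, where $\lambda(t)^\nu\in\cinf(\nu)$ is the horizontal lift of $\lambda(t)\in\g$ in the sense already introduced.

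This reduces the lemma to solving a time-dependent ODE on $N$. Uniqueness of $H_j$ among maps satisfying (i)--(iii) is then immediate from uniqueness of solutions of ODEs, since (i) and (iii) pin down $\dot\gamma(t)$ as the unique $\nu$-vector projecting to the prescribed velocity. For existence I would solve the equation separately on $N\times Q_j\times I_1\times I$ and on $N\times Q_j\times I_2\times I$ (the path $c$ has a corner at $t=1/2$, which is precisely why (iii) is only required for $t\neq 1/2$), starting the second piece at the endpoint produced by the first; continuity of $\calH_j$ at $t=1/2$ makes the two pieces match, yielding a continuous $H_j$ that is smooth on each $N\times Q_j\times I_i\times I$ by smooth dependence of solutions on initial conditions and parameters. \emph{The one point that could fail is global existence over the full interval}: a priori a horizontal lift might escape in finite time. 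This is the main obstacle, and it is dispatched by the compactness of $N$: since $M$ is closed and $G$ is compact (because $\g$ is compact semisimple), $N=(\widetilde M\times G)/\pi_1(M)$ is compact, so the integral curves of the bounded time-dependent field $\lambda(t)^\nu$ are defined throughout each $I_i$.

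It remains to read off (iv)--(vi), and here the key tool is the uniqueness just established, which lets me identify the horizontal lift of an exponential path with the corresponding flow $\phi_X=X_1^\nu$. For (vi) I would combine (i) with the boundary condition $\calH_j(\xi,1,s)=\exp(Z)$ of Lemma~\ref{lemma:calHj}, giving $D_NH_j(z,\xi,1,s)=D_N(z)\exp(Z)$ for every $s$. For (iv), at $s=0$ we have $\calH_j(\xi,t,0)=\sigma(\xi,t)=\exp(tZ)$, whose left logarithmic derivative is the constant $Z$; hence the lift is $t\mapsto Z_t^\nu(z)$ and $H_j(\cdot,\xi,1,0)=Z_1^\nu=\phi_Z$. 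For (v), at $s=1$ we have $\calH_j(\xi,t,1)=\eta(\xi,t)$: on $I_1$ the logarithmic derivative is the constant $2X$, so the lift reaches $X_1^\nu(z)=\phi_X(z)$ at $t=1/2$; on $I_2$ the path $\exp(X)\exp((2t-1)Y)$ has constant logarithmic derivative $2Y$, so continuing from $\phi_X(z)$ the lift reaches $Y_1^\nu(\phi_X(z))=(\phi_Y\phi_X)(z)$ at $t=1$. This gives $H_j(\cdot,\xi,1,1)=\phi_Y\phi_X$ and completes the proof.
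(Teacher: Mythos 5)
Your proof is correct: conditions (i)--(iii) do characterize $H_j$ as the $\nu$-horizontal lift of the left-translated homotopies $\calH_j$, and your ODE argument---with global existence from compactness of $N$, legitimate here since $\g$ compact semisimple makes $G$ compact, and with smooth dependence on parameters giving smoothness on each $N\times Q_j\times I_i\times I$---together with the identification of lifts of the exponential paths $\exp(tZ)$, $\exp(2tX)$, $\exp(X)\exp((2t-1)Y)$ with the flows $X_t^\nu$ yields (iv)--(vi). This is essentially the approach the paper itself takes, since its proof just defers to Lemmas~3.1 and~3.2 of \cite{Alv5}, which carry out the same horizontal-lifting construction; your write-up has the merit of being self-contained.
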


\begin{proof} It is completely similar to the proofs of Lemmas~3.1 and~3.2 in \cite{Alv5}.
\end{proof}

Therefore, for all $\xi=(X,Y,Z)\in Q_j$, $H_j(\cdot,\xi,1,\cdot):N\times I
\ar N$ is an $\widetilde{\calF}$-integrable homotopy of $\phi_Z$ to
$\phi_Y\phi_X$ \cite{Kacimi83}. Hence the corresponding homotopy operator in $\Omega$
preserves the filtration, and thus its $(0,-1)$-bihomogeneous component $k_{j,\xi}: \Omega
\ar \Omega$ satisfies
$$(\phi_X^{\ast}\phi_Y^{\ast}-\phi_Z^{\ast})_{0,0}=d_{0,1}k_{j,\xi}+k_{j,\xi} 
d_{0,1}\;.$$ 
Define the operators $\rho,\lambda:\Omega\ar \Omega$ by setting
$$\rho(\alpha)=\int_{\overline{B^{\ast}}}\phi_X^{\ast}\alpha\,\Delta^{\ast}(X)\;,
\quad\lambda(\alpha)=\int_{\overline{B^{\ast}}}\Phi_X\alpha\,\Delta^{\ast}(X)\;,$$
where $\Delta^{\ast}=\exp^{\ast}\Delta$ and $\Phi_X$ is the homogeneous operator of
degree $-1$ on $\Omega$ associated to the homotopy $\phi_{tX}$ ($t\in I$) \cite{BottTu}. The
operators $\rho$ and $\lambda$ are linear homogeneous of degrees $0$ and $-1$,
respectively, satisfying $\rho-\text{id} =d\lambda+\lambda d$. Moreover, since
$\phi_{tX}$ preserves the pair of foliations $\left(\calG ,\widetilde{\calF}\right)$ 
(because $X^{\nu}$ is an infinitesimal transformation of $\left(\calG ,\widetilde{\calF}\right)$),
$\Phi_X$ reduces the filtration at most by a unit. Therefore the bihomogeneous operators
$\rho_1\equiv\rho_{0,0\ast}$ and
$\lambda_1\equiv\lambda_{-1,0\ast}$ on $E_1$ satisfy
$\rho_1-\text{id} =d_1\lambda_1+\lambda_1d_1$.

For $\alpha\in \Omega$ and $X\in\overline{B^{\ast}}$, by Lemma \ref{lemma:JX} and
Corollary \ref{cor:mu} we have
\begin{eqnarray*}
\phi_X^{\ast}\rho(\alpha)&=&\int_{F_X}\phi_X^{\ast}\phi_Y^{\ast}\alpha\,d\mu_X(Y,Z)\;, \\ 
\rho(\alpha)&=&\int_{W_{X,-X}}\phi_Y^{\ast}\alpha\, d\mu_X(Y,Z) \\
&=&\int_{W_{-X,X}}\phi_Y^{\ast}\alpha\, d\mu_{-X}(Y,Z) \\
&=&\int_{F_{-X}}\phi_Y^{\ast}\alpha\, d\mu_{-X}(Y,Z) \\
&=&\int_{F_X}\phi_Z^{\ast}\alpha\, d\mu_X(Y,Z)\;.
\end{eqnarray*} 
So
\begin{equation}\label{e:phiXrho-rho}
(\phi_X^{\ast}\rho-\rho)\alpha=\int_{F_X}(\phi_X^{\ast}\phi_Y^{\ast}-
\phi_Z^{\ast})\alpha\, d\mu_X(Y,Z)\;.
\end{equation}
Take a smooth partition of unity $f_1,\ldots,f_k$ of $F$ subordinated to the open
cover $Q_1,\ldots,Q_k$. Then the $f_j(X,\cdot,\cdot)$
form a partition of unity of $F_X$ subordinated to the open cover given by the
slices 
$$Q_{j,X}=\{(Y,Z)\in \g^2:\ (X,Y,Z)\in Q_j\}\,.$$ 
Let $\Psi_X:\Omega\ar \Omega$
be the $(0,-1)$-bihomogeneous linear operator given by
$$\Psi_X\alpha=\sum_{j=1}^k\int_{Q_{j,X}}k_{j,\xi}\alpha\,f_j(\xi)\,d\mu_X(Y,Z)\;,$$ where
$\xi=(X,Y,Z)$ for each $(Y,Z)\in Q_{j,X}$. From \eqref{e:phiXrho-rho} we get 
\begin{equation}\label{e:PsiX}
(\phi_X^{\ast}\rho-\rho)_{0,0}=d_{0,1}\Psi_X+\Psi_Xd_{0,1}\;.
\end{equation}

\begin{lem} \label{lemma:PsiX}
$\Psi_X\alpha$ depends continuously on $X\in\overline{B^{\ast}}$ for each
$\alpha\in \Omega$ fixed.
\end{lem}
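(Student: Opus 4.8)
The plan is to pull the integral defining $\Psi_X$ back to the fixed compact group $G$ by means of the maps ${\mathcal J}_X$ and then to apply the dominated convergence theorem; the whole point is that the dependence of the domain of integration on $X$ is confined to the cut locus and is therefore invisible to the integral. For each $j$, writing $a=\exp(X)$, I would use Lemma~\ref{lemma:JX}(i) together with the definition of $\mu_X$ and Corollary~\ref{cor:mu} to transport the $j$-th term of $\Psi_X\alpha$ to an integral over $B\cap L_a^{-1}B$ against the Haar measure $\mu$. Since $\mu_X$ is concentrated on $W_X$, the function $f_j(X,\cdot,\cdot)$ is supported in $Q_{j,X}\cap W_X$, and ${\mathcal J}_X(g)=(\log g,\log(ag))$, the substitution $(Y,Z)={\mathcal J}_X(g)$ gives
\begin{multline*}
\int_{Q_{j,X}}k_{j,\xi}\alpha\,f_j(\xi)\,d\mu_X(Y,Z)\\
=\int_{B\cap L_a^{-1}B}k_{j,(X,\log g,\log(ag))}\alpha\;f_j(X,\log g,\log(ag))\,d\mu(g),
\end{multline*}
the integrand being read as zero wherever $f_j$ vanishes. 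As $\log=\exp^{-1}:B\ar B^{\ast}$ is smooth and $\xi\mapsto k_{j,\xi}\alpha$ is \cinf\ in $\xi$ (this continuity being inherited from the construction of the $H_j$ in Lemma~\ref{lemma:Hj}), the resulting $\Omega$-valued integrand $\Upsilon_j(X,g)$ is \cinf\ in $(X,g)$ on the open set where $g\in B$ and $ag\in B$.

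To establish \cinf\ continuity I would fix a compact $K\subset N$, a differential operator $P$ on $N$, and a sequence $X_m\ar X_0$ in $\overline{B^{\ast}}$, put $a_0=\exp(X_0)$, and estimate the $j$-th term, writing
\[
h_m(g)=\sup_{z\in K}\bigl|P\bigl(\Upsilon_j(X_m,g)-\Upsilon_j(X_0,g)\bigr)(z)\bigr|.
\]
Because $F$ is compact, $0\le f_j\le1$, and $\xi\mapsto k_{j,\xi}\alpha$ is \cinf, the quantities $P(k_{j,\xi}\alpha)(z)$ are uniformly bounded for $\xi\in F$ and $z\in K$, so each $h_m$ is dominated by a constant $2M_{P,K}$ independent of $m$ and $g$, hence $\mu$-integrable; the same uniform bounds justify differentiating under the integral in the $z$-variable. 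For $g$ outside the measure-zero set $C\cup a_0^{-1}C$ one has $g\in B$ and $a_0g\in B$; since $B$ is open and $a_mg\ar a_0g$, the points $\log(a_mg)$ converge to $\log(a_0g)$ in $B^{\ast}$ while $\log g$ stays fixed, so $(X_m,\log g,\log(a_mg))\ar(X_0,\log g,\log(a_0g))$ in $F$, and continuity of $k_{j,\cdot}\alpha$ and of $f_j$ forces $h_m(g)\ar0$. Dominated convergence then yields $\int_G h_m\,d\mu\ar0$; since
\[
\sup_{z\in K}\int_G\bigl|P(\Upsilon_j(X_m,g)-\Upsilon_j(X_0,g))(z)\bigr|\,d\mu(g)\le\int_G h_m\,d\mu,
\]
every $P$-derivative of the $j$-th term of $\Psi_{X_m}\alpha$ converges uniformly on $K$. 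Summing over the finitely many $j$ and letting $K$ and $P$ vary gives \cinf\ convergence of $\Psi_{X_m}\alpha$ to $\Psi_{X_0}\alpha$, and since $\overline{B^{\ast}}$ is metrizable this sequential continuity is the assertion.

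The main obstacle is precisely the $X$-dependence of the slice $F_X$ --- equivalently of the domain $B\cap L_a^{-1}B$ --- combined with the failure of $\log$ to extend continuously across the cut locus $C$. It is overcome by two facts already recorded: $C$, and hence each $a^{-1}C$, has measure zero, so $B\cap L_a^{-1}B$ has full Haar measure and the moving boundary does not affect the integral; and the \cinf-continuous family $k_{j,\xi}$ together with the partition of unity provides a uniform bound over the compact space $F$, furnishing the dominating function. The only delicate point is to take the supremum over $z\in K$ \emph{inside} the integral before passing to the limit, which is what upgrades the pointwise-in-$z$ convergence of the integrand to the uniform convergence of all derivatives demanded by the \cinf\ topology.
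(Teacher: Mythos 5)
Your proof is correct, and it is essentially the intended argument: the paper's own proof of Lemma~\ref{lemma:PsiX} consists of the single remark that it is completely analogous to the proof of Lemma~3.3 in \cite{Alv5}, and that argument is exactly the scheme you carry out---transport the integral to the Haar measure of $G$ through ${\mathcal J}_X$ (Lemma~\ref{lemma:JX} and Corollary~\ref{cor:mu}), bound the integrand uniformly using the compactness of $F$, the joint smoothness of $(z,\xi)\mapsto (k_{j,\xi}\alpha)(z)$ coming from Lemma~\ref{lemma:Hj}, and the fact that $\supp f_j$ is a compact subset of $Q_j$ (which is what legitimizes your zero-extension convention, since $k_{j,\xi}$ is undefined off $Q_j$), and then apply dominated convergence off the null set $C\cup a_0^{-1}C$. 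Your write-up simply supplies in full the details that the paper delegates to the citation.
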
 

\begin{proof} It is completely analogous to the proof of Lemma 3.3 in \cite{Alv5}.
\end{proof}

\begin{lem} \label{lemma:phitX}
For $\alpha\in \Omega$, $X\in\g $ and $t\in\R $ we have
$$\phi_{tX}^{\ast}\alpha=\alpha+\int_0^t\phi_{sX}^{\ast}\theta_X\alpha\,ds =
\alpha+\theta_X\int_0^t\phi_{sX}^{\ast}\alpha\,ds\;.$$
\end{lem}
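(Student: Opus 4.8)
The plan is to recognize Lemma~\ref{lemma:phitX} as the leafwise incarnation of the classical identity relating the flow of a vector field to its Lie derivative, namely $\frac{d}{dt}\phi_{tX}^\ast=\theta_X\circ\phi_{tX}^\ast=\phi_{tX}^\ast\circ\theta_X$, integrated from $0$ to $t$ against the initial condition $\phi_{0X}^\ast=\text{id}$. The one structural point that must be settled first is that $\{\phi_{tX}\}_{t\in\R}$ is genuinely a one-parameter group. This holds because the assignment $X\mapsto X^\nu$ is linear: the conditions defining $X^\nu$---namely $G$-invariance, $D_N$-projectability, and $D_{N\ast}X^\nu=X$---are linear in $X$ and determine $X^\nu$ uniquely. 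Hence $(tX)^\nu=tX^\nu$, so by the rescaling property of flows $\phi_{tX}=(tX)_1^\nu=X_t^\nu$, the time-$t$ flow of the single vector field $X^\nu$. Consequently $\phi_{(s+t)X}=\phi_{sX}\phi_{tX}$, $\phi_{0X}=\text{id}$, and each $\phi_{tX}$ is a transformation of the leaves of $\calG$ that preserves smoothness on $N$.

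Next I would establish the basic differential equation in the $\cinf$ topology on $\Omega$. By definition $\theta_X$ is the infinitesimal generator of the pullback action, $\theta_X\beta=\frac{d}{dh}\big|_{h=0}\phi_{hX}^\ast\beta$ for every $\beta\in\Omega$. Differentiating $\phi_{(t+h)X}^\ast=\phi_{hX}^\ast\phi_{tX}^\ast$ at $h=0$ gives $\frac{d}{dt}\phi_{tX}^\ast\alpha=\theta_X\phi_{tX}^\ast\alpha$, whereas differentiating $\phi_{(t+h)X}^\ast=\phi_{tX}^\ast\phi_{hX}^\ast$ and using the continuity of the linear operator $\phi_{tX}^\ast$ gives $\frac{d}{dt}\phi_{tX}^\ast\alpha=\phi_{tX}^\ast\theta_X\alpha$. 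Both identities rest on the group property just verified.

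Finally I would integrate. The fundamental theorem of calculus in the $\cinf$ topology, applied to $\frac{d}{dt}\phi_{tX}^\ast\alpha=\phi_{tX}^\ast\theta_X\alpha$ with $\phi_{0X}^\ast\alpha=\alpha$, yields $\phi_{tX}^\ast\alpha=\alpha+\int_0^t\phi_{sX}^\ast\theta_X\alpha\,ds$, the first claimed equality. Applying it instead to $\frac{d}{dt}\phi_{tX}^\ast\alpha=\theta_X\phi_{tX}^\ast\alpha$ gives $\phi_{tX}^\ast\alpha=\alpha+\int_0^t\theta_X\phi_{sX}^\ast\alpha\,ds$; since $\theta_X$ is a continuous linear leafwise operator it commutes with integration in the parameter $s$, so $\int_0^t\theta_X\phi_{sX}^\ast\alpha\,ds=\theta_X\int_0^t\phi_{sX}^\ast\alpha\,ds$, which is the second equality.

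The main obstacle is not the algebra but the analytic bookkeeping: one must check that the $t$-derivative, the $s$-integral, and the interchange of $\theta_X$ with the integral are all legitimate in the $\cinf$ topology on $\Omega$ (a perfectly good Fréchet topology, even though it descends to a non-Hausdorff one on cohomology), and, crucially, that every operation preserves smoothness on the ambient manifold $N$ and not merely along the leaves of $\calG$. This is exactly where the facts that $X^\nu$ is $D_N$-projectable and $G$-invariant, and that $\phi_{tX}$ preserves the pair $(\calG,\widetilde{\calF})$ while staying smooth on $N$, are used; granting these standard smooth-dependence properties of flows of complete vector fields, the computation above goes through verbatim.
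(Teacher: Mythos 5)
Your proposal is correct and takes essentially the same route as the paper: the paper's own proof is merely a pointer to Lemma~3.4 of \cite{Alv5}, and the argument there is exactly the one you give (the one-parameter group property of $\phi_{tX}$ via linearity of $X\mapsto X^{\nu}$, the differential equation $\frac{d}{dt}\phi_{tX}^{\ast}=\theta_X\phi_{tX}^{\ast}=\phi_{tX}^{\ast}\theta_X$, and integration in the $\cinf$ topology). The only point worth making explicit is the completeness of the flow of $X^{\nu}$, needed to have $\phi_{tX}$ defined for all $t\in\R$; this is clear here because $X^{\nu}$ is $G$-invariant and projects to $M$, which is closed (and indeed $N$ itself is compact since $G$ is compact in this subsection), a fact you grant only implicitly at the end.
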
 

\begin{proof} It is completely analogous to the proof of Lemma 3.4 in \cite{Alv5}.
\end{proof}

\begin{lem} \label{lemma:E2iso}
$\rho_1(E_1)=(E_1)_{\theta_1=0}$, and
$$\begin{CD}\rho_{1\ast}:E_2 @>\cong>> H((E_1)_{\theta_1=0})\;.\end{CD}$$
\end{lem}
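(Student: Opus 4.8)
The plan is to show that $\rho_1$ is a projection of $E_1$ onto the subcomplex $(E_1)_{\theta_1=0}$ of $\theta_1$-invariant classes, and then to deduce the asserted isomorphism formally from the homotopy identity $\rho_1-\mathrm{id}=d_1\lambda_1+\lambda_1 d_1$. Throughout I shall use that $\rho$ preserves the filtration \eqref{e:filtration} (being an average of the filtration-preserving operators $\phi_Y^\ast$), so that it induces $\rho_1=\rho_{0,0\ast}$ on $E_1$ and commutes with $d_1$; the same holds for each $\phi_X^\ast$, giving operators $(\phi_X^\ast)_{0,0\ast}$ on $E_1$. I shall also use that, since $\exp:B^\ast\ar B$ is a measure-preserving diffeomorphism and $C$ has measure zero, $\int_{\overline{B^\ast}}\Delta^\ast=\mu(G)=1$.

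First I would establish $\rho_1(E_1)\subseteq(E_1)_{\theta_1=0}$. Since the associated graded is multiplicative on filtration-preserving operators, comparison of leading bihomogeneous components gives $(\phi_X^\ast\rho)_{0,0\ast}=(\phi_X^\ast)_{0,0\ast}\,\rho_1$ on $E_1$. On the other hand \eqref{e:PsiX} exhibits $(\phi_X^\ast\rho-\rho)_{0,0}$ as the $d_{0,1}$-homotopy $d_{0,1}\Psi_X+\Psi_X d_{0,1}$, so it induces zero on $E_1$; hence
$$
(\phi_X^\ast)_{0,0\ast}\,\rho_1=\rho_1\qquad\text{on }E_1,\quad X\in\overline{B^\ast}.
$$
Replacing $X$ by $tX$ for small $t$ and differentiating at $t=0$ — legitimate because, by Lemma~\ref{lemma:phitX} passed to the $(0,0)$-component and to $E_1$, $(\phi_{tX}^\ast)_{0,0\ast}$ is the one-parameter group generated by $\theta_{1X}$ — yields $\theta_{1X}\rho_1=0$ for every $X\in\g$, i.e. $\rho_1(E_1)\subseteq(E_1)_{\theta_1=0}$. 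Conversely, if $c\in(E_1)_{\theta_1=0}$, the integral identity of Lemma~\ref{lemma:phitX} passed to $E_1$ becomes a linear integral equation solved by the constant path (solutions being unique), so $(\phi_Y^\ast)_{0,0\ast}c=c$ for all $Y\in\overline{B^\ast}$; averaging then gives $\rho_1 c=\int_{\overline{B^\ast}}(\phi_Y^\ast)_{0,0\ast}c\,\Delta^\ast(Y)=c$. Thus $\rho_1$ restricts to the identity on $(E_1)_{\theta_1=0}$, and together with the previous inclusion this proves $\rho_1(E_1)=(E_1)_{\theta_1=0}$ and that $\rho_1$ is idempotent.

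It remains to prove that $\rho_{1\ast}$ is an isomorphism. Since $\theta_{1X}$ commutes with $d_1$, the subspace $(E_1)_{\theta_1=0}$ is a subcomplex; let $\iota:(E_1)_{\theta_1=0}\hookrightarrow E_1$ be the inclusion, a chain map, and regard $\rho_1$ as a chain map $E_1\ar(E_1)_{\theta_1=0}$. Then $\iota\rho_1=\rho_1:E_1\ar E_1$ is chain homotopic to $\mathrm{id}_{E_1}$ through $\lambda_1$, so $(\iota\rho_1)_\ast=\mathrm{id}$ on $E_2=H(E_1,d_1)$; and $\rho_1\iota=\mathrm{id}$ on $(E_1)_{\theta_1=0}$ by the second step above, so $(\rho_1\iota)_\ast=\mathrm{id}$ on $H((E_1)_{\theta_1=0})$. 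Hence $\rho_{1\ast}$ and $\iota_\ast$ are mutually inverse, giving the isomorphism $\rho_{1\ast}:E_2\ar H((E_1)_{\theta_1=0})$.

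The main obstacle is the second paragraph: making rigorous the passage between the integral operator $\rho$ and the infinitesimal statement $\theta_{1X}\rho_1=0$ and its converse. This requires commuting the extraction of the $(0,0)$-component and the passage to $E_1$ with the integral over $\overline{B^\ast}$, which rests on the continuity in $X$ supplied by Lemma~\ref{lemma:PsiX}, together with the computation \eqref{e:phiXrho-rho}–\eqref{e:PsiX} that feeds the group law $\exp(X)\exp(Y)=\exp(Z)$ into the homotopy $\Psi_X$. Once these bookkeeping points are settled, everything proceeds exactly as in the proof of Theorem~3.5 of \cite{Alv5}.
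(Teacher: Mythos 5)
Your overall architecture agrees with the paper's (show $\rho_1$ is a projection of $E_1$ onto $(E_1)_{\theta_1=0}$, then conclude formally from $\iota\rho_1-\mathrm{id}=d_1\lambda_1+\lambda_1d_1$ and $\rho_1\iota=\mathrm{id}$), and your first and third paragraphs are sound. The gap is in the second paragraph, in both directions, and it is not the bookkeeping you describe. The space $E_1$ is non-Hausdorff: $\operatorname{im}d_{0,1}$ is not closed in $\ker d_{0,1}$. Your differentiation at $t=0$ amounts to the following: from $(\phi_{tX}^{\ast})_{0,0}\alpha-\alpha\in\operatorname{im}d_{0,1}$ for all small $t$ (with $\alpha=\rho_{0,0}\beta$), conclude that the limit $(\theta_X)_{0,0}\alpha=\lim_{t\to 0}t^{-1}\bigl((\phi_{tX}^{\ast})_{0,0}\alpha-\alpha\bigr)$ also lies in $\operatorname{im}d_{0,1}$. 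Since you have no control on how the $d_{0,1}$-primitives vary with $t$, this limit is only known to lie in $\overline{\operatorname{im}d_{0,1}}$; that is, you only get that $\theta_{1X}\rho_1[\beta]$ lies in the closure of the trivial subspace of $E_1$, which is strictly weaker than $\rho_1[\beta]\in(E_1)_{\theta_1=0}$. The converse direction has the same defect: knowing $(\phi_Y^{\ast})_{0,0\ast}c=c$ in $E_1$ for each $Y$ separately does not allow you to ``average,'' because that requires choosing primitives $\gamma_Y$ with $(\phi_Y^{\ast})_{0,0}\alpha-\alpha=d_{0,1}\gamma_Y$ varying measurably in $Y$ with convergent integral; moreover ``solutions being unique'' for an integral equation valued in a non-Hausdorff space is false in general, since limits there are not unique.

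This is precisely why the paper never differentiates on $E_1$: it manufactures explicit primitives with controlled dependence on the parameter. For the inclusion $\rho_1(E_1)\subset(E_1)_{\theta_1=0}$ it combines \eqref{e:phiXPsiX}, the continuity of $X\mapsto\Psi_X\beta$ (Lemma~\ref{lemma:PsiX}), Lemma~\ref{lemma:phitX}, and the commutation $(\theta_X)_{0,0}d_{0,1}=d_{0,1}(\theta_X)_{0,0}$ to produce the closed formula
$$(\theta_X)_{0,0}\alpha=d_{0,1}\Bigl(\Psi_X\beta-(\theta_X)_{0,0}\int_0^1\Psi_{sX}\beta\,ds\Bigr)\;,$$
an actual primitive rather than a limit of primitives. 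For $\rho_1\iota=\mathrm{id}$ it uses the linearity of $(\theta_X)_{0,0}$ in $X$ to choose $\beta_X$ with $(\theta_X)_{0,0}\alpha=d_{0,1}\beta_X$ depending linearly on $X$, whence Lemma~\ref{lemma:phitX} gives
$$\rho_{0,0}\alpha=\alpha+d_{0,1}\int_{\overline{B^{\ast}}}\int_0^1(\phi_{sX}^{\ast})_{0,0}\beta_X\,ds\,\Delta^{\ast}(X)\;.$$
These explicit formulas, and the continuity statements that legitimize the integrals, are the actual content of the lemma; with them your outline closes, but without them the differentiation and uniqueness steps fail.
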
 

\begin{proof} First, we shall prove that $\rho_1(E_1)\subset(E_1)_{\theta_1=0}$.
Take any $\alpha\in\ker(d_{0,1})$ defining $[\alpha]\in E_1$. If $[\alpha]\in
\rho_1(E_1)$, we can suppose 
$\alpha=\rho_{0,0}\beta$ for some $\beta\in\ker(d_{0,1})$. Then 
\begin{equation}\label{e:phiXPsiX}
(\phi_X^{\ast})_{0,0}\alpha-\alpha=d_{0,1}\Psi_X\beta\quad\text{for all}\quad
X\in\overline{B^{\ast}} \end{equation}  by \eqref{e:PsiX}. Thus Lemmas~\ref{lemma:PsiX}
and~\ref{lemma:phitX} yield  
$$(\theta_X)_{0,0}\alpha=d_{0,1} \left(\Psi_X\beta-(\theta_X)_{0,0}\int_0^1
\Psi_{sX}\beta\,ds \right)$$ as in \cite[page 181]{Alv5}. Therefore
$\rho_1([\alpha])\in(E_1)_{\theta_1=0}$.

Let $\iota:(E_1)_{\theta_1=0}\ar E_1$ be the inclusion map. If $[\alpha]
\in (E_1)_{\theta_1=0}$, since $(\theta_X)_{0,0}$ depends linearly on $X\in
\g $, there is a linear map $X\mapsto\beta_X$ of $\g$ to $\Omega$ so
that $(\theta_X)_{0,0}\alpha = d_{0,1}\beta_X$ for all $X\in\g$. Thus by
Lemma \ref{lemma:phitX} we get
$$\rho_{0,0}\alpha=\alpha+d_{0,1}\int_{\overline{B^{\ast}}}\int_0^1(\phi_{sX}^{\ast})_{0,0}
\beta_X\,ds\,\Delta^{\ast}(X)\;,$$
yielding $\rho_1\iota = \text{id} $. In particular $\rho_1(E_1)=
(E_1)_{\theta_1=0}$. We also have $\iota\rho_1-\text{id} =d_1\lambda_1+\lambda_1
d_1$, and the result follows. \end{proof}

\begin{proof}[End of the proof of Theorem~\ref{thm:E2}] Since $G$ is compact,
the representation $\theta_{\g}$ is semisimple \cite[Sections~4.4 and~5.12]{GHV:III}.
So $$H((E_1)_{\theta_1=0})\cong H(\g )\otimes(E_1^{0,\cdot})_{\theta_1=0}$$
by \cite[Theorem V in Section~4.11, and Section~5.26]{GHV:III}. The
result now follows from Lemma \ref{lemma:E2iso}. \end{proof}

\subsection{Relation between $H^\cdot(\calF)$ and $E_2$}

\begin{thm}\label{thm:HF=E2}
With the above notations, $H^{\cdot}(\calF )\cong E_2^{0,\cdot}$.
\end{thm}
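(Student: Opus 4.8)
The plan is to identify both sides with the $G$-invariant part of the leafwise cohomology of the subfoliation $\widetilde\calF$ of $N$, and then to realize that invariant cohomology as $H^\cdot(\calF)$ by means of the section $s$. By \eqref{e:E0,E1} the bottom column of the spectral sequence is $E_1^{0,\cdot}=H^\cdot(\Omega^{0,\cdot},d_{0,1})$, and since $\Omega^{0,v}=\cinf(\bigwedge^v T\widetilde\calF^\ast)$ with $d_{0,1}$ the leafwise de~Rham derivative of $\widetilde\calF$, this is exactly $E_1^{0,\cdot}\cong H^\cdot(\widetilde\calF)$. Under this identification $\theta_1=(\theta_X)_{0,0\ast}$ is the infinitesimal generator of the action induced on $H^\cdot(\widetilde\calF)$ by the $\phi_X$, which by the commutative diagram defining $\phi$ is the principal $G$-action on $N$; as $G$ is connected this gives $(E_1^{0,\cdot})_{\theta_1=0}=(H^\cdot(\widetilde\calF))^G$. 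Hence Theorem~\ref{thm:E2} with $u=0$ yields $E_2^{0,\cdot}\cong(H^\cdot(\widetilde\calF))^G$, and it remains to construct a canonical isomorphism $H^\cdot(\calF)\cong(H^\cdot(\widetilde\calF))^G$.

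For this I would use the principal $G$-bundle $p:N\ar M$ trivialized by $s$, together with the facts that $\widetilde\calF$ is $G$-invariant, that its leaves are the fibers of $D_N$ inside the leaves of $\calG$, and that $s(M)=D_N^{-1}(e)$ carries $\widetilde\calF$ as the foliation $\calF$. First one checks that $\widetilde\calF$ is transverse to the $G$-orbits, i.e.\ to the fibers of $p$: a vector tangent to an orbit and to a leaf of $\widetilde\calF$ would be $p$-vertical and tangent to $\widetilde\calF$, but $p$ maps each leaf of $\widetilde\calF$ diffeomorphically onto a leaf of $\calF$, so it vanishes. Thus $\widetilde\calF$ is $p$-related to $\calF$, and $p^\ast$ carries $\Omega^\cdot(\calF)$ isomorphically onto the subcomplex $(\Omega^\cdot(\widetilde\calF))^G$ of $G$-invariant leafwise forms, commuting with the leafwise differentials; its inverse is $s^\ast$, since $ps=\text{id}_M$ and a $G$-invariant leafwise form of $\widetilde\calF$ is determined by its restriction to $s(M)$. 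This gives $H^\cdot(\calF)\cong H^\cdot((\Omega^\cdot(\widetilde\calF))^G)$.

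Finally I would commute cohomology with the passage to invariants. Since $G$ is compact, averaging $\rho=\int_G g^\ast\,dg$ is a cochain projection of $\Omega^\cdot(\widetilde\calF)$ onto $(\Omega^\cdot(\widetilde\calF))^G$ that splits the inclusion, so on cohomology $\rho_\ast$ becomes the projection of $H^\cdot(\widetilde\calF)$ onto its $G$-invariant part and $H^\cdot((\Omega^\cdot(\widetilde\calF))^G)\cong(H^\cdot(\widetilde\calF))^G$. Combining this with the previous paragraph and with Theorem~\ref{thm:E2} finishes the proof. The step needing the most care is the second one: showing that $p^\ast$ is a cochain isomorphism onto the invariant leafwise forms in this transverse (rather than fibered) situation, i.e.\ that invariant leafwise forms of $\widetilde\calF$ descend smoothly precisely because $\widetilde\calF$ is transverse to the orbits. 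One must also match the algebraic invariance $\theta_1=0$ of Theorem~\ref{thm:E2} with the geometric $G$-invariance; note that here the $G$-action is transverse to $\widetilde\calF$ and so need not be trivial on $H^\cdot(\widetilde\calF)$, which is exactly why the invariant subspace is in general proper and the averaging step is essential.
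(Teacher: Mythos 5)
Your middle step (the second paragraph) is sound: in the trivialization $N\equiv M\times G$ defined by $s$, the leaves of $\widetilde\calF$ are the slices $L\times\{g\}$, the principal action is translation in the $G$-factor, and $\pi_N^\ast$ identifies $\Omega^\cdot(\calF)$ with the subcomplex of $G$-invariant leafwise forms of $\widetilde\calF$, with inverse $s^\ast$. But your reduction of $E_2^{0,\cdot}$ to principal invariants rests on a misidentification. The maps $\phi_{tX}$ are the flows of the vector fields $X^\nu\in\cinf(\nu)\subset\cinf(T\calG)$: they preserve \emph{every leaf of $\calG$} and cover, via $D_N$, the right translations $R_{\exp(tX)}$ --- that is all the commutative diagram says. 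The principal action, by contrast, permutes the leaves of $\calG$ and is \emph{anti}-equivariant with respect to $D_N$ (it covers left translations); its fundamental vector fields are $\pi_N$-vertical, whereas $T\calG\cap\ker\pi_{N\ast}=0$, so $\phi_{tX}$ is never the principal action of $\exp(tX)$. In the trivialization, $\phi_{tX}$ moves the $M$-coordinate transversally to $\calF$ (a transverse holonomy motion) and left-translates the $G$-coordinate, while the principal action fixes the $M$-coordinate. Hence $\theta_1$-invariance (which is what $E_2^{0,\cdot}$ computes) and principal-$G$-invariance are a priori different conditions on $E_1^{0,\cdot}\cong H^\cdot(\widetilde\calF)$; their equivalence is not an observation, and your argument nowhere establishes it.

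The deeper gap is the averaging step, which is not formal in this setting and is precisely where the paper concentrates its technical effort. Averaging over the principal action does give a cochain projection $\rho$ onto the invariant forms, hence a split injection $H^\cdot(\calF)\cong H^\cdot\bigl((\Omega^\cdot(\widetilde\calF))^G\bigr)\ar\bigl(H^\cdot(\widetilde\calF)\bigr)^G$. Surjectivity, however, requires the following: if $[\beta]$ is invariant, i.e.\ for every $g'$ there is some $\gamma_{g'}$ with $(g')^\ast\beta-\beta=d_{\widetilde\calF}\gamma_{g'}$, then $\beta-\rho\beta$ must be exact; the only way to produce a primitive is to integrate the $\gamma_{g'}$ over $G$, and nothing guarantees that $g'\mapsto\gamma_{g'}$ can be chosen smoothly, or even measurably with bounds. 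In ordinary de~Rham theory one escapes this because a connected group acts trivially on cohomology (homotopy invariance); here that escape is blocked, since a path in $G$ moves points \emph{across} the leaves of $\widetilde\calF$, is not an $\widetilde\calF$-integrable homotopy, and so induces no cochain homotopy on the leafwise complex --- as you yourself note, the action on $H^\cdot(\widetilde\calF)$ is genuinely nontrivial. This parametrized-primitive problem is exactly what the paper's machinery solves for the $\phi$-flows: the integrable homotopies of Lemmas~\ref{lemma:calHj} and~\ref{lemma:Hj} together with the cut-locus measures produce the smoothly varying homotopy operators $\Psi_X$ and the identity \eqref{e:phiXPsiX}; in Lemma~\ref{lemma:E2iso} primitives are made to depend smoothly on the direction by choosing $\beta_X$ \emph{linearly} in $X$ and integrating with Lemma~\ref{lemma:phitX}; and the isomorphism with $H^\cdot(\calF)$ is then realized concretely by $(s^\ast)_2$ with explicit inverse $T_\ast$, built from the spreading operators $T_X$ and a partition of unity on $G$ (Lemmas~\ref{lemma:sXiotaX}, \ref{lemma:Tsz=z} and~\ref{lemma:s*2surjective}). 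Without a substitute for this machinery, your proof does not close.
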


To begin with the proof of Theorem~\ref{thm:HF=E2}, the section
$s:M\ar N$ defines a homomorphism $(s^{\ast})_1:E_1^{0,\cdot}\ar H^{\cdot}(\calF )$ since $s^{\ast}
d_{0,1}=d_\calF s^{\ast}$. By restricting $(s^{\ast})_1$, we get
$(s^{\ast})_2: E_2^{0,\cdot}=(E_1^{0,\cdot})_{\theta_1=0}\ar H^{\cdot}(\calF )$. We
will prove that $(s^{\ast})_2$ is an isomorphism.

For any $X\in\g$ set $s_X=\phi_Xs:M\ar N$, which is an
embedding, but not a section of $\pi_N$ in general. Nevertheless $s_X(M)=s(M)
\,\exp(X)$. Analogously to $s$, the map $s_X$
also defines $(s_X^{\ast})_1:E_1^{0,\cdot}\ar H^{\cdot}(\calF )$. Let
$U_X$ be the neighborhood of $s_X(M)$ given by
$$U_X = \bigcup_{Y\in B^{\ast}} \phi_Ys_X(M) = s_X(M)\, B =
D_N^{-1}(\exp(X)\, B)\;.$$
For each $X\in\g$ and each $x\in M$, $s_X$ defines an isomorphism
$$\begin{CD}s_{X\ast}: T_x\calF @>\cong>> T_{s_X(x)}\widetilde{\calF}\;.\end{CD}$$
So
$$\begin{CD}
s_X^{\ast}:\bigwedge T_{s_X(x)}\widetilde\calF^{\ast}@>\cong>> \bigwedge T_x\calF^{\ast}\;.
\end{CD}$$
For each $\omega\in \Omega(\calF )$, let $\omega_X$ be the unique smooth
section of $\bigwedge T\widetilde\calF^{\ast}$ over $s_X(M)$ such that
$s_X^{\ast}\omega_X=\omega$. Define $T_X\omega\in 
\Omega^{0,\cdot}(\calG |_{U_X})$ by
setting
$$(T_X\omega)(\phi_Ys_X(x))=(\phi_{-Y}^{\ast})_{0,0}\omega_X(s_X(x))$$
for $Y\in B^{\ast}$ and $x\in M$. This is well defined since $(x,Y)\mapsto
\phi_Ys_X(x)$ is a diffeomorphism of $M\times B^{\ast}$ onto $U_X$. Moreover
$d_{0,1}T_X=T_Xd_{\calF}$ since $d_{0,1}\equiv d_{\widetilde{\calF}}$ on
$\Omega^{0,\cdot}\equiv\Omega^\cdot\left(\widetilde{\calF}\right)$, and
$(\phi_Ys_X)^{\ast}\widetilde{\calF}=\calF$ for all
$X,Y\in\g$. Therefore $T_X$ defines a map $T_{X\ast}: H^{\cdot}(\calF )\ar
E_1^{0,\cdot}(\calG |_{U_X})$.

The inclusion map $\iota_X:U_X\ar N$ induces
$(\iota_X^{\ast})_1:E_1^{0,\cdot}\ar E_1^{0,\cdot}(\calG |_{U_X})$.

\begin{lem} \label{lemma:sXiotaX}
If $\zeta\in(E_1^{0,\cdot})_{\theta_1=0}$, then $T_{X\ast}(s_X^{\ast})_1\zeta=
(\iota_X^{\ast})_1\zeta$.
\end{lem}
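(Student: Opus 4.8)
I want to show that for a class $\zeta \in (E_1^{0,\cdot})_{\theta_1=0}$, the two maps $T_{X\ast}(s_X^\ast)_1$ and $(\iota_X^\ast)_1$ agree on $\zeta$ in $E_1^{0,\cdot}(\calG|_{U_X})$. Both sides are classes in the $d_{0,1}$-cohomology of $\Omega^{0,\cdot}(\calG|_{U_X})$, so it suffices to produce, at the level of forms, a $d_{0,1}$-primitive connecting representatives of the two sides—together with the observation that the construction can be carried out smoothly in the fiber parameter.

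\textbf{The plan.} The key geometric fact is that $U_X$ is swept out diffeomorphically by the flow $\phi_Y$ ($Y\in B^\ast$) applied to $s_X(M)$, so every point of $U_X$ is uniquely $\phi_Y s_X(x)$. First I would choose a $d_{0,1}$-closed representative $\alpha\in\Omega^{0,\cdot}$ of $\zeta$. On the one hand, $(\iota_X^\ast)_1\zeta$ is represented by the restriction $\alpha|_{U_X}$. On the other hand, $(s_X^\ast)_1\zeta$ is represented by $s_X^\ast\alpha = (s_X^\ast\alpha) =: \omega\in\Omega(\calF)$, and then $T_{X\ast}(s_X^\ast)_1\zeta$ is represented by $T_X\omega$, whose value at $\phi_Y s_X(x)$ is $(\phi_{-Y}^\ast)_{0,0}\,\omega_X(s_X(x))$ where $\omega_X$ is determined by $s_X^\ast\omega_X=\omega=s_X^\ast\alpha$, so that $\omega_X = \alpha|_{s_X(M)}$. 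Thus I must compare $\alpha$ and $T_X\omega$ on $U_X$, where at the point $\phi_Y s_X(x)$ the second form equals $(\phi_{-Y}^\ast)_{0,0}\,\alpha(s_X(x))$.

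\textbf{The core computation.} The difference between these two representatives is controlled by the flow: the discrepancy at $\phi_Y s_X(x)$ is $\alpha(\phi_Y s_X(x)) - (\phi_{-Y}^\ast)_{0,0}\,\alpha(s_X(x))$, which is exactly $(\phi_{-Y}^\ast)_{0,0}$ applied to the difference $(\phi_Y^\ast)_{0,0}\alpha - \alpha$ evaluated along the flow line. The natural tool here is Lemma~\ref{lemma:phitX}: writing $Y=\int$ of its one-parameter family, we have
$$
(\phi_{Y}^\ast)_{0,0}\alpha-\alpha = \theta_Y\int_0^1(\phi_{sY}^\ast)_{0,0}\alpha\,ds\;,
$$
and since $\zeta\in(E_1^{0,\cdot})_{\theta_1=0}$, the operator $(\theta_Y)_{0,0}$ applied to $\alpha$ is $d_{0,1}$-exact—that is, there is a linear assignment $Y\mapsto\beta_Y$ with $(\theta_Y)_{0,0}\alpha=d_{0,1}\beta_Y$, exactly as used in the proof of Lemma~\ref{lemma:E2iso}. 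Feeding this into the integral and using $d_{0,1}\theta_Y=\theta_Y d_{0,1}$ produces an explicit $d_{0,1}$-primitive, depending smoothly on $Y\in B^\ast$ and hence defining a genuine element of $\Omega^{0,\cdot}(\calG|_{U_X})$, whose $d_{0,1}$ exhibits $\alpha|_{U_X}-T_X\omega$ as $d_{0,1}$-exact. Consequently the two representatives define the same class, which is the assertion.

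\textbf{Expected obstacle.} The main technical difficulty is the smooth (and continuous-in-$Y$) dependence of the primitive across the whole tubular region $U_X$, i.e. verifying that the $Y$-integrated primitive assembled from the $\beta_Y$ is a bona fide smooth leafwise form on $U_X$ rather than merely a fiberwise construction; this is the analogue of the regularity issues handled by Lemmas~\ref{lemma:PsiX} and~\ref{lemma:phitX}. The boundary behavior as $Y$ approaches $\partial B^\ast$ (the cut locus $C^\ast$) must also be checked, but since $\overline{B^\ast}$ is compact and the integrands extend continuously, this follows the same pattern as the corresponding arguments in \cite{Alv5}. Once smoothness is secured, the identity $T_{X\ast}(s_X^\ast)_1\zeta=(\iota_X^\ast)_1\zeta$ is immediate.
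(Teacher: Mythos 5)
Your argument is correct, but it takes a genuinely different route from the paper's at the decisive step. The paper never exploits the hypothesis $\theta_1\zeta=0$ through primitives of $(\theta_Y)_{0,0}\alpha$; instead it invokes Lemma~\ref{lemma:E2iso} to write $\rho_1\zeta=\zeta$, picks $\alpha,\gamma\in\Omega^{0,\cdot}$ with $d_{0,1}\alpha=0$, $\zeta=[\alpha]$, $\alpha=\rho_{0,0}\alpha+d_{0,1}\gamma$, and then applies \eqref{e:phiXPsiX} to obtain
$$(\phi_Y^{\ast})_{0,0}\alpha-\alpha=d_{0,1}\bigl(\Psi_Y\alpha+(\phi_Y^{\ast})_{0,0}\gamma-\gamma\bigr)\;,$$
so its $Y$-dependent primitive comes from the operators $\Psi_Y$, i.e.\ from the homotopy machinery of Lemmas~\ref{lemma:calHj}, \ref{lemma:Hj} and~\ref{lemma:PsiX}. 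You instead manufacture the primitive directly from $\theta_1\zeta=0$: choose $Y\mapsto\beta_Y$ linear with $(\theta_Y)_{0,0}\alpha=d_{0,1}\beta_Y$ and integrate the flow identity of Lemma~\ref{lemma:phitX} to get the primitive $\int_0^1(\phi_{sY}^{\ast})_{0,0}\beta_Y\,ds$. This is exactly the computation the paper performs inside the proof of Lemma~\ref{lemma:E2iso} (the half establishing $\rho_1\iota=\mathrm{id}$), so your route recombines the paper's own ingredients; it is more elementary, bypassing $\rho$ and $\Psi_Y$ entirely, and your primitive visibly depends smoothly on $Y$ (a linear-in-$Y$ combination of flow pullbacks of finitely many fixed forms, integrated in $s$), whereas the paper's $\Psi_Y\alpha$ is only asserted to depend continuously on $Y$ (Lemma~\ref{lemma:PsiX}). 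What the paper's version buys is reuse of machinery it needs anyway for the other half of Lemma~\ref{lemma:E2iso}. From that point on the two proofs agree: restrict to the slices $\phi_Ys_X(M)$ and assemble the primitives into a single form $\eta_X$ on $U_X$ with $T_Xs_X^{\ast}\alpha=\alpha+d_{0,1}\eta_X$.

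Three small repairs to your sketch. First, your displayed formula mixes the two expressions in Lemma~\ref{lemma:phitX}: as written, $\theta_Y$ acts on the integral rather than on $\alpha$, so the exactness of $(\theta_Y)_{0,0}\alpha$ cannot be substituted directly; use $(\phi_Y^{\ast})_{0,0}\alpha-\alpha=\int_0^1(\phi_{sY}^{\ast})_{0,0}(\theta_Y)_{0,0}\alpha\,ds$, the bihomogeneous components splitting because $\phi_{sY}^{\ast}$ and $\theta_Y$ preserve the filtration. Second, the commutation needed to pull $d_{0,1}$ out of the integral is $(\phi_{sY}^{\ast})_{0,0}d_{0,1}=d_{0,1}(\phi_{sY}^{\ast})_{0,0}$ on $(0,\cdot)$-forms (again by filtration preservation), not the identity for $\theta_Y$ that you quote. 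Third, the step that turns the slice-by-slice identities into an identity in $\Omega^{0,\cdot}(\calG|_{U_X})$ is that each slice $\phi_Ys_X(M)$ is $\widetilde{\calF}$-saturated, so $d_{0,1}\equiv d_{\widetilde{\calF}}$ commutes with restriction to it; this is the one sentence of the paper's proof that your sketch leaves implicit. Finally, your worry about boundary behavior at $\partial B^{\ast}$ is unnecessary: $U_X$ is parametrized diffeomorphically by $M\times B^{\ast}$ with $B^{\ast}$ open, so only interior values of $Y$ ever occur.
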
 

\begin{proof} By Lemma \ref{lemma:E2iso} we have $\rho_1\zeta=\zeta$. We thus
can choose forms $\alpha,\gamma\in \Omega^{0,\cdot}$ such that $d_{0,1}\alpha=0$,
$\zeta=[\alpha]$, and $\alpha=\rho_{0,0}\alpha+d_{0,1}\gamma$. Then
\eqref{e:phiXPsiX} yields
$$
(\phi_Y^{\ast})_{0,0}(\alpha-d_{0,1}\gamma)-(\alpha-d_{0,1}\gamma)=d_{0,1}\Psi_Y\alpha
$$ 
for any $Y\in B^{\ast}$. So
\begin{equation}\label{e:phiY*alpha-alpha}
(\phi_Y^{\ast})_{0,0}\alpha-\alpha=d_{0,1}(\Psi_Y\alpha+
(\phi_Y^{\ast})_{0,0}\gamma-\gamma)\;. 
\end{equation}
Clearly $(s_X^{\ast}\alpha)_X=\alpha|_{s_X(M)}$. Hence 
\begin{eqnarray*}
(T_Xs_X^{\ast}\alpha)(\phi_Ys_X(x))&=&(\phi_{-Y}^{\ast})_{0,0}(\alpha(s_X(x))) \\
&=&(\alpha+d_{0,1}(\Psi_{-Y}\alpha+(\phi_{-Y}^{\ast})_{0,0}\gamma-\gamma))
(\phi_Ys_X(x))
\end{eqnarray*} 
by \eqref{e:phiY*alpha-alpha}. But since each $\phi_Ys_X(M)$ is $\widetilde{\calF}$-saturated,
$d_{0,1}\equiv d_{\widetilde{\calF}}$ commutes with the restriction to each $\phi_Ys_X(M)$. Therefore we
get $$T_Xs_X^{\ast}\alpha=\alpha+d_{0,1}\eta_X$$
on $U_X$, where $\eta_X$ is the $(0,\cdot)$-form on $U_X$ defined by
$$\eta_X(\phi_Ys_X(x))=(\Psi_{-Y}\alpha+(\phi_{-Y}^{\ast})_{0,0}\gamma-\gamma)
(\phi_Ys_X(x))\;,$$ which finishes the proof. \end{proof}

Since $G$ is compact, there is a finite sequence $0=X_1,X_2,\ldots,X_l$ of
elements of $\g $ such that
$$G=B\cup\exp(X_2)\, B\cup\cdots\cup\exp(X_l)\, B\;.$$
Let $U_j=U_{X_j}$ $T_j=T_{X_j}$, $s_j=s_{X_j}$ and $\iota_j=\iota_{X_j}$ for $j=1,\ldots,l$. Then
$N=U_1\cup\cdots\cup U_l$. Let $h_1,\ldots,h_l$ be a smooth partition of unity of
$G$ subordinated to the open cover $\exp(X_1)\,B,\ldots,\exp(X_l)\,B$ so that 
$h_1(e)=1$. Then $D_N^\ast h_1,\ldots,D_N^\ast h_l$ is a partition of unity of N
subordinated to  $U_1,\ldots,U_l$.

For $\omega\in \Omega(\calF)$, define $T\omega\in \Omega^{0,\cdot}$ by setting
$$T\omega=\sum_{j=1}^l D_N^\ast h_j\, T_j\omega\;.$$
Since each $D_N^\ast h_j$ is constant along the leaves of $\widetilde{\calF}$, we get
$d_{0,1}T=Td_\calF .$ So $T$ defines a map 
$T_{\ast}:H^{\cdot}(\calF )\ar E_1^{0,\cdot}$.

\begin{lem} \label{lemma:Tsz=z}
If $\zeta\in(E_1^{0,\cdot})_{\theta_1=0}$, then $T_{\ast}(s^{\ast})_1\zeta=\zeta$.
\end{lem}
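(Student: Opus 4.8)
The plan is to reduce everything to Lemma~\ref{lemma:sXiotaX}, which already computes $T_{j\ast}(s_j^\ast)_1\zeta$ for the section $s_j=s_{X_j}$ adapted to the $j$-th chart. The only discrepancy is that in the definition of $T$ the operator $T_j$ is applied to $s^\ast\omega$, with the single section $s=s_{X_1}$ ($X_1=0$), rather than to $s_j^\ast\omega$. So the heart of the matter is the identity $(s^\ast)_1\zeta=(s_j^\ast)_1\zeta$ in $H^\cdot(\calF)$, valid for every $j$ and every $\zeta\in(E_1^{0,\cdot})_{\theta_1=0}$.

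First I would record the geometric fact that $s^\ast$ annihilates $\Omega^{u,\cdot}$ for $u\geq1$: since $s(M)$ is $\widetilde\calF$-saturated and $s$ carries $\calF$-leaves onto $\widetilde\calF$-leaves, one has $s_\ast(T\calF)=T\widetilde\calF|_{s(M)}$, which has no component in $\nu$, so a form needing a $\nu$-input pulls back to zero. Consequently, for $\alpha\in\Omega^{0,\cdot}$ we get $s_j^\ast\alpha=s^\ast\phi_{X_j}^\ast\alpha=s^\ast\big((\phi_{X_j}^\ast)_{0,0}\alpha\big)$, whence $(s_j^\ast)_1\zeta=(s^\ast)_1\big((\phi_{X_j}^\ast)_{0,0\ast}\zeta\big)$, and it suffices to show $(\phi_{X_j}^\ast)_{0,0\ast}\zeta=\zeta$ in $E_1^{0,\cdot}$. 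By Lemma~\ref{lemma:E2iso} I may pick a representative $\alpha$ of $\zeta$ with $d_{0,1}\alpha=0$ and $\alpha=\rho_{0,0}\alpha+d_{0,1}\gamma$, exactly as in the proof of Lemma~\ref{lemma:sXiotaX}; then \eqref{e:phiY*alpha-alpha} gives $(\phi_X^\ast)_{0,0}\alpha-\alpha=d_{0,1}\big(\Psi_X\alpha+(\phi_X^\ast)_{0,0}\gamma-\gamma\big)$ for every $X\in\overline{B^\ast}$. The crucial observation is that the covering elements may themselves be chosen inside $\overline{B^\ast}$: since $\exp(\overline{B^\ast})=G$ and each translate $gB$ is open, compactness of $G$ lets us take $X_1=0,X_2,\dots,X_l\in\overline{B^\ast}$ with $G=\bigcup_j\exp(X_j)B$. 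For such $X_j$ the displayed formula shows $(\phi_{X_j}^\ast)_{0,0}\alpha$ and $\alpha$ are $d_{0,1}$-cohomologous, i.e.\ $(\phi_{X_j}^\ast)_{0,0\ast}\zeta=\zeta$, and therefore $(s_j^\ast)_1\zeta=(s^\ast)_1\zeta$.

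With this in hand the proof finishes by bookkeeping. Applying Lemma~\ref{lemma:sXiotaX} together with the previous step gives $T_{j\ast}(s^\ast)_1\zeta=T_{j\ast}(s_j^\ast)_1\zeta=(\iota_j^\ast)_1\zeta=[\alpha|_{U_j}]$ for each $j$. Because each $D_N^\ast h_j$ is $d_{0,1}$-closed (it is constant along the leaves of $\widetilde\calF$) and is supported in $U_j$, multiplication by it descends to $E_1^{0,\cdot}$ and extends classes on $U_j$ by zero, so that
$$T_\ast(s^\ast)_1\zeta=\sum_{j=1}^l\big[D_N^\ast h_j\,T_j(s^\ast\alpha)\big]=\sum_{j=1}^l D_N^\ast h_j\,(\iota_j^\ast)_1\zeta=\Big[\sum_{j=1}^l D_N^\ast h_j\,\alpha\Big]=[\alpha]=\zeta,$$
where the last line uses $\sum_j D_N^\ast h_j=D_N^\ast1=1$. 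I expect the only genuine obstacle to be the step of the second paragraph—bringing the homotopy identity \eqref{e:phiY*alpha-alpha}, which is only available for parameters in $\overline{B^\ast}$, to bear on the covering translations; the realization that the $X_j$ can be taken in $\overline{B^\ast}$ is precisely what unlocks it. Everything else is the partition-of-unity computation above combined with the vanishing of $s^\ast$ in positive $\nu$-degree.
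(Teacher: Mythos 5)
Your proof is correct and takes essentially the same route as the paper's: both reduce to Lemma~\ref{lemma:sXiotaX} through the identity $(s_j^\ast)_1\zeta=(s^\ast)_1\zeta$ (obtained from \eqref{e:phiXPsiX}, equivalently \eqref{e:phiY*alpha-alpha}, i.e.\ $(\phi_{X_j}^\ast)_{0,0\ast}\zeta=\zeta$), and then patch over the cover with the partition of unity $D_N^\ast h_j$, whose leafwise constancy lets multiplication by it commute with $d_{0,1}$ so that the local exact corrections $d_{0,1}\beta_j$ assemble into a global one. Your explicit remarks---that $s^\ast$ kills $\Omega^{u,\cdot}$ for $u\geq 1$, and that the covering elements $X_j$ may be chosen in $\overline{B^{\ast}}$ so that \eqref{e:phiXPsiX} applies to them---make precise two points the paper uses implicitly, but they do not change the argument.
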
 

\begin{proof} For each $X\in\g$, let $(\phi_X^{\ast})_1:E_1\ar E_1$
be the homomorphism defined by $\phi_X^{\ast}$ ($(\phi_X^{\ast})_1\equiv
(\phi_X^{\ast})_{0,0\ast}$). Since $s_X=\phi_Xs$, by \eqref{e:phiXPsiX}
we have
$$(s_X^{\ast})_1\zeta=s_1^{\ast}(\phi_X^{\ast})_1\zeta=s_1^{\ast}\zeta\;.$$
Therefore, by Lemma \ref{lemma:sXiotaX}, 
$$(\iota_j^{\ast})_1\zeta=T_{j\ast}(s_j^{\ast})_1\zeta=T_{j\ast}(s^{\ast})_1\zeta$$
for $j=1,\ldots,l$. 
So, if $\zeta=[\alpha]$ for $\alpha\in \Omega^{0,\cdot}$ with $d_{0,1}\alpha=0$, there
is some $\beta_j\in \Omega^{0,\cdot}$ for each $j$ such that
$\alpha-T_js^{\ast}\alpha=d_{0,1}\beta_j$ over $U_j$. 
Let
$$\beta=\sum_{j=1}^lD_N^\ast h_j\, \beta_j\in \Omega^{0,\cdot}\;.$$
Since each $D_N^\ast h_j$ is constant on the leaves of $\widetilde{\calF}$ and $d_{0,1}
\equiv d_{\widetilde{\calF}}$, we get
\begin{eqnarray*}
d_{0,1}\beta&=&\sum_{j=1}^l D_N^\ast h_j\, d_{0,1}\beta_j \\
&=&\sum_{j=1}^l D_N^\ast h_j\, (\alpha-T_js^{\ast}\alpha) \\
&=&\alpha-Ts^{\ast}\alpha\;,
\end{eqnarray*}
and the proof is complete.
\end{proof} 

\begin{lem} \label{lemma:s*2surjective}
$(s^{\ast})_2:E_2^{0,\cdot}\ar H^{\cdot}(\calF )$ is surjective.
\end{lem}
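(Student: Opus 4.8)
**Proof proposal for Lemma~\ref{lemma:s*2surjective}.**

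The goal is to show that $(s^{\ast})_2:E_2^{0,\cdot}=(E_1^{0,\cdot})_{\theta_1=0}\ar H^{\cdot}(\calF)$ is surjective. The plan is to produce, for every leafwise cohomology class on $M$, a preimage living in $(E_1^{0,\cdot})_{\theta_1=0}$, and the natural candidate is supplied by the averaging map $T_{\ast}$ already constructed above. So the first step is to take an arbitrary class $[\omega]\in H^{\cdot}(\calF)$ with $\omega\in\Omega(\calF)$, $d_\calF\omega=0$, and consider $T\omega\in\Omega^{0,\cdot}$. We already know $d_{0,1}T=Td_\calF$, so $d_{0,1}(T\omega)=0$ and $T\omega$ defines a class in $E_1^{0,\cdot}$; write $\zeta=T_{\ast}[\omega]=[T\omega]\in E_1^{0,\cdot}$.

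The crux is then twofold: first, that $\zeta$ actually lands in the $\theta_1$-invariant subspace $(E_1^{0,\cdot})_{\theta_1=0}=E_2^{0,\cdot}$, so that it is a legitimate element of the domain of $(s^{\ast})_2$; and second, that $(s^{\ast})_2\zeta=[\omega]$, i.e.\ $T_{\ast}$ is a right inverse to $(s^{\ast})_1$ on cohomology. For the second point I would exploit the symmetric companion already proved: Lemma~\ref{lemma:Tsz=z} gives $T_{\ast}(s^{\ast})_1=\mathrm{id}$ on $(E_1^{0,\cdot})_{\theta_1=0}$, and I expect the reverse composite $(s^{\ast})_1 T_{\ast}=\mathrm{id}$ on $H^{\cdot}(\calF)$ to follow from the explicit formula for $T$. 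Indeed $s^{\ast}T\omega=\sum_j (h_j\circ D_N\circ s)\, s^{\ast}T_j\omega$; since $D_Ns=\mathrm{const}_e$ we have $h_j(D_N(s(x)))=h_j(e)$, and with the normalization $h_1(e)=1$, $h_j(e)=0$ for $j\neq 1$, only the $j=1$ term survives. As $X_1=0$, $s_1=s$ and $T_1$ restricts on $s(M)$ to the identity recovery of $\omega$ from $\omega_{X_1}$, so $s^{\ast}T\omega=\omega$, giving $(s^{\ast})_1 T_{\ast}[\omega]=[\omega]$ at once. Thus surjectivity of $(s^{\ast})_2$ reduces entirely to the first point.

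The main obstacle, then, is verifying that $\zeta=T_{\ast}[\omega]$ is $\theta_1$-invariant. Here I would use the $G$-averaging built into $T$ together with the commutative diagram relating $\phi$ and the exponential map and the identity $X_t^{\nu}=R_{\exp(tX)}$ on $\nu$-flows. The construction of each $T_X$ spreads $\omega$ out over the neighborhood $U_X=D_N^{-1}(\exp(X)\,B)$ by pushing forward along the flows $\phi_{-Y}$, and the partition of unity $D_N^{\ast}h_j$ is pulled back from $G$; consequently $T\omega$ is, up to the $d_{0,1}$-exact ambiguity that is invisible in $E_1$, invariant under the transverse flows $\phi_X$. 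Concretely I would show $(\phi_X^{\ast})_{0,0}(T\omega)-T\omega$ is $d_{0,1}$-exact for all $X\in\overline{B^{\ast}}$, arguing as in the derivation of \eqref{e:phiY*alpha-alpha} via the homotopy operators $\Psi_X$; differentiating in $X$ at $0$ and invoking Lemma~\ref{lemma:phitX} then yields $(\theta_X)_{0,0}(T\omega)=d_{0,1}(\text{something})$, i.e.\ $\theta_{1X}\zeta=0$ for every $X\in\g$. This is exactly the type of computation already carried out in the proof of Lemma~\ref{lemma:E2iso}, so the delicate part is bookkeeping the dependence of the averaging on the several charts $U_j$ and confirming that the partition-of-unity gluing does not spoil $\theta_1$-invariance, which it does not because the $h_j$ are functions on $G$ and hence the $\theta_X$-action commutes with multiplication by $D_N^{\ast}h_j$ up to lower-filtration terms that vanish in $E_1$. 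Granting this, $\zeta\in E_2^{0,\cdot}$ and $(s^{\ast})_2\zeta=[\omega]$, establishing surjectivity.
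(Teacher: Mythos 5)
Your reduction has the right shape---everything hinges on the single claim that $T_{\ast}[\omega]$ lies in $(E_1^{0,\cdot})_{\theta_1=0}$---and your verification that $(s^{\ast})_1T_{\ast}=\text{id}$ on $H^{\cdot}(\calF)$ is correct: it is essentially the computation the paper makes at the end of the proof of Theorem~\ref{thm:HF=E2}, using $D_Ns=\text{const}_e$ and $h_1(e)=1$. But that single claim is precisely where your proposal has a genuine gap, and note that the paper's logic runs the other way around: the inclusion $T_{\ast}(H^{\cdot}(\calF))\subset E_2^{0,\cdot}$ is Corollary~\ref{cor:T*HFsubsetE2}, which is \emph{deduced from} Lemma~\ref{lemma:s*2surjective} together with Lemma~\ref{lemma:Tsz=z}, so it is not available as an ingredient here. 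Neither of your two justifications closes the gap. Mimicking the derivation of \eqref{e:phiY*alpha-alpha} is circular: that identity comes from \eqref{e:phiXPsiX}, which applies only to forms of the shape $\alpha=\rho_{0,0}\beta+d_{0,1}\gamma$, i.e.\ to classes already known to be fixed by $\rho_1$; and by Lemma~\ref{lemma:E2iso}, being fixed by $\rho_1$ is the same as lying in $(E_1)_{\theta_1=0}$, which is exactly what you are trying to prove about $T_{\ast}[\omega]$. Moreover, the assertion that $(\theta_X)_{0,0}$ commutes with multiplication by $D_N^{\ast}h_j$ ``up to lower-filtration terms that vanish in $E_1$'' is false: since $X^{\nu}$ is a section of $\nu$ with $D_{N\ast}X^{\nu}=X$, the commutator is multiplication by the function $X^{\nu}(D_N^{\ast}h_j)=D_N^{\ast}(Xh_j)$, a bihomogeneous operator of bidegree $(0,0)$ which certainly does not vanish in $E_1$. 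It produces the term $\sum_jD_N^{\ast}(Xh_j)\,T_j\omega$, and although $\sum_jXh_j=0$, killing this term requires comparing the local spreads $T_j\omega$ on overlaps up to $d_{0,1}$-exact errors; for an arbitrary closed $\omega$ that comparison is essentially the statement being proved (the paper can make it, in Lemma~\ref{lemma:sXiotaX}, only for classes that are already $\theta_1$-invariant).

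The paper's proof avoids all of this by constructing the preimage differently: take $f\geq 0$ compactly supported in $B$ with $\int_Bf\,\Delta=1$, set $\alpha=D_N^{\ast}f\,T_1\omega$ (a single chart, no partition of unity), and take $\rho_1[\alpha]$. Invariance is then automatic, because $\rho_1(E_1)=(E_1)_{\theta_1=0}$ by Lemma~\ref{lemma:E2iso}, and the pointwise computation $(\rho_{0,0}\alpha)(s(x))=\omega_{X_1}(s(x))\int_Gf\,\Delta=\omega_{X_1}(s(x))$ yields $(s^{\ast})_1\rho_1[\alpha]=[\omega]$. You could try to repair your route in the same spirit by replacing $T_{\ast}[\omega]$ with $\rho_1T_{\ast}[\omega]$, but then your identity $s^{\ast}T\omega=\omega$ no longer suffices: on $E_1^{0,\cdot}$ one has $\rho_1=\text{id}+\lambda_1d_1$ (the term $d_1\lambda_1$ vanishes there), and $d_1T_{\ast}[\omega]$ has no reason to vanish, so proving $(s^{\ast})_1\rho_1T_{\ast}[\omega]=[\omega]$ again requires an explicit averaging computation of the paper's kind---at which point the compactly supported bump $f$ on one chart is the cleaner choice.
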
 

\begin{proof} Take any $\omega\in \Omega(\calF )$ with $d_\calF \omega=0$, and take any 
function $f\geq 0$ compactly supported in $B$ such that $\int_Bf(g)\,\Delta(g)=1$.
Then $\alpha=D_N^\ast f\, T_1\omega$ is a $(0,\cdot)$-form compactly supported in
$U_1$ and satisfying $d_{0,1}\alpha =0$. So $\alpha$ defines a class $\zeta\in
E_1^{0,\cdot}$. We shall prove that $(s^{\ast})_1\rho_1\zeta=[\omega]$.

For $x\in M$ and $Y\in B^{\ast}$ we have
$$\alpha(\phi_Ys(x))=f(\exp(Y))\,(\phi_{-Y}^{\ast})_{0,0}(\omega_{X_1}(s(x)))\;.$$
So
$$((\phi_Y^{\ast})_{0,0}\alpha)(s(x))=f(\exp(Y))\,\omega_{X_1}(s(x))\;,$$
yielding
\begin{eqnarray*}
(\rho_{0,0}\alpha)(s(x))&=&\int_{B^{\ast}}((\phi_Y^{\ast})_{0,0}\alpha)(s(x))\,
\Delta^{\ast}(Y) \\
&=&\omega_{X_1}(s(x))\int_{B^{\ast}}f(\exp(Y))\,\Delta^{\ast}(Y) \\
&=&\omega_{X_1}(s(x))\int_Gf(g)\,\Delta(g) \\
&=&\omega_{X_1}(s(x))\;.
\end{eqnarray*} 
Therefore $s^{\ast}\rho_{0,0}\alpha=s^{\ast}\omega_{X_1}=\omega$, and the proof follows.
\end{proof}

\begin{cor}\label{cor:T*HFsubsetE2}
$T_{\ast}(H^{\cdot}(\calF ))\subset E_2^{0,\cdot}$.
\end{cor}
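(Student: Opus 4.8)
The plan is to read this off formally from the two lemmas just proved. The key observation is that $(s^{\ast})_2$ is by construction nothing but the restriction of $(s^{\ast})_1$ to the subspace $E_2^{0,\cdot}=(E_1^{0,\cdot})_{\theta_1=0}$, so the surjectivity supplied by Lemma~\ref{lemma:s*2surjective} dovetails with the left-inverse property of Lemma~\ref{lemma:Tsz=z} to pin down the image of $T_{\ast}$ exactly.

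Concretely, I would take an arbitrary class $c\in H^{\cdot}(\calF)$. By Lemma~\ref{lemma:s*2surjective} the map $(s^{\ast})_2:E_2^{0,\cdot}\ar H^{\cdot}(\calF)$ is surjective, so there is some $\zeta\in E_2^{0,\cdot}=(E_1^{0,\cdot})_{\theta_1=0}$ with $(s^{\ast})_2\zeta=c$; equivalently $(s^{\ast})_1\zeta=c$, since $(s^{\ast})_2$ is just $(s^{\ast})_1$ restricted to $(E_1^{0,\cdot})_{\theta_1=0}$. Now I apply $T_{\ast}$: because $\zeta$ lies in $(E_1^{0,\cdot})_{\theta_1=0}$, Lemma~\ref{lemma:Tsz=z} gives $T_{\ast}(s^{\ast})_1\zeta=\zeta$, whence $T_{\ast}c=\zeta\in E_2^{0,\cdot}$. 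As $c$ was arbitrary, this yields $T_{\ast}(H^{\cdot}(\calF))\subset E_2^{0,\cdot}$, which is the assertion. (The same computation in fact gives equality, since every $\zeta\in E_2^{0,\cdot}$ is realized as $T_{\ast}(s^{\ast})_1\zeta$, but only the inclusion is needed here.)

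There is no genuine obstacle: the corollary is a purely formal consequence of Lemmas~\ref{lemma:Tsz=z} and~\ref{lemma:s*2surjective}, and the only point requiring a word of care is the identification of $(s^{\ast})_2$ with the restriction of $(s^{\ast})_1$ to $(E_1^{0,\cdot})_{\theta_1=0}$, which is immediate from the way $(s^{\ast})_2$ was defined.
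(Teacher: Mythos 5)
Your proposal is correct and is exactly the paper's argument: the paper's proof of this corollary is the single sentence ``It follows directly from Lemmas~\ref{lemma:Tsz=z} and~\ref{lemma:s*2surjective}'', and your write-up simply makes that deduction explicit (surjectivity of $(s^{\ast})_2$ produces $\zeta\in(E_1^{0,\cdot})_{\theta_1=0}$ with $(s^{\ast})_1\zeta=c$, and then $T_{\ast}c=\zeta\in E_2^{0,\cdot}$). Your parenthetical observation that the inclusion is in fact an equality is also consistent with what the paper uses afterwards, namely that $(s^{\ast})_2$ and $T_{\ast}$ are mutually inverse.
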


\begin{proof} It follows directly from Lemmas~\ref{lemma:Tsz=z} and~\ref{lemma:s*2surjective}.
\end{proof}

\begin{proof}[End of the proof of Theorem~\ref{thm:HF=E2}] By Corollary~\ref{cor:T*HFsubsetE2} we can
consider $T_{\ast}:H^{\cdot}(\calF)\rightarrow E_2^{0,\cdot}$. By Lemma \ref{lemma:Tsz=z} we have
$T_{\ast}(s^{\ast})_2=\text{id} $. On the other hand, $(s^{\ast})_2T_{\ast}=\text{id}$
because $(D_N^\ast h_1)(s(x))=1$  for all $x\in M$ since $h_1(e)=1$. So $(s^{\ast})_2$ is an
isomorphism.  \end{proof}

\begin{cor}\label{cor:H1F=H1G}
$H^1(\calF )\cong H^1(\calG )$ and $\calH^1(\calF )\cong
\calH^1(\calG )$.
\end{cor}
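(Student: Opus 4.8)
The plan is to read off $H^1(\calG)$ directly from the spectral sequence $(E_i,d_i)$ of Section~5, using the computations already established, and then to compare with $H^1(\calF)$ through Theorem~\ref{thm:HF=E2}. The only extra input I need is Whitehead's lemmas: since $\g$ is semisimple, its Lie algebra cohomology with trivial coefficients satisfies $H^1(\g)=H^2(\g)=0$. Substituting this into the tensor decomposition of Theorem~\ref{thm:E2} gives $E_2^{1,0}\cong H^1(\g)\otimes E_2^{0,0}=0$ and $E_2^{2,0}\cong H^2(\g)\otimes E_2^{0,0}=0$.

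Next I would analyze the abutment $H^1(\calG)$ in total degree $1$. The filtration \eqref{e:filtration} satisfies $F^2H^1(\calG)=0$ for degree reasons and $F^0H^1(\calG)=H^1(\calG)$, so the associated graded of $H^1(\calG)$ consists of the two pieces $E_\infty^{0,1}$ and $E_\infty^{1,0}$. Now $E_\infty^{1,0}$ is a subquotient of $E_2^{1,0}=0$, hence $E_\infty^{1,0}=F^1H^1(\calG)=0$, so there is no extension problem and $H^1(\calG)=E_\infty^{0,1}$. On the other hand $E_2^{0,1}$ admits no incoming differential (the source would have negative first index, and $\Omega^{u,v}=0$ for $u<0$), while its only possible outgoing differential $d_2:E_2^{0,1}\to E_2^{2,0}$ maps into the zero space $E_2^{2,0}=0$; thus $E_r^{0,1}=E_2^{0,1}$ for every $r$ and $E_\infty^{0,1}=E_2^{0,1}$. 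Combining these identities yields $H^1(\calG)\cong E_2^{0,1}$, and since Theorem~\ref{thm:HF=E2} gives $H^1(\calF)\cong E_2^{0,1}$ via $(s^\ast)_2$, I obtain the first isomorphism $H^1(\calF)\cong H^1(\calG)$.

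For the reduced cohomology statement I would verify that every identification above is a homeomorphism of topological vector spaces, so that it descends to the Hausdorffifications. The edge map $H^1(\calG)=F^0H^1(\calG)\to E_\infty^{0,1}$ is a homeomorphism because its kernel $F^1H^1(\calG)$ vanishes; the equality $E_\infty^{0,1}=E_2^{0,1}$ is an equality of subquotients carrying the same quotient topology, as the source and target of the relevant $d_2$'s are both zero; and the isomorphism $(s^\ast)_2:E_2^{0,1}\to H^1(\calF)$ of Theorem~\ref{thm:HF=E2} is topological, its inverse $T_\ast$ being induced by continuous operations on forms. Since a homeomorphism carries the closure of $\{0\}$ onto the closure of $\{0\}$, the composite topological isomorphism $H^1(\calG)\cong H^1(\calF)$ descends to $\calH^1(\calF)\cong\calH^1(\calG)$. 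The vanishing of the differentials is routine bookkeeping; the step that genuinely requires care is this last topological verification, because the $E_i$ are non-Hausdorff in general and the entire content of the reduced statement is that the isomorphisms respect the $\cinf$ topology rather than being merely algebraic.
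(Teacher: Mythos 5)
Your proof is correct and takes essentially the same route as the paper's: Theorem~\ref{thm:E2} together with $H^1(\g)=H^2(\g)=0$ for compact semisimple $\g$ kills $E_2^{1,0}$ and $E_2^{2,0}$, so $H^1(\calG)\cong E_2^{0,1}$ canonically and topologically, and Theorem~\ref{thm:HF=E2} then identifies this with $H^1(\calF)$, the reduced statement following because the isomorphisms are homeomorphisms. You merely make explicit two points the paper compresses into the word ``canonically'': the vanishing of $E_2^{1,0}$ (hence of $F^1H^1(\calG)$, which the paper's one-line proof uses but does not state), and the verification that every identification respects the \cinf\ topology.
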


\begin{proof} Theorem~\ref{thm:E2} yields $E_2^{2,0}\cong H^2(\g )\otimes
E_2^{0,0}=0$ since $\g $ is compact semisimple. So $E_2^{0,1}=
E_{\infty}^{0,1}\cong H^1(\calG )$ canonically. Then 
$H^1(\calF )\cong H^1(\calG )$ as topological vector spaces by 
Theorem~\ref{thm:HF=E2}, 
obtaining also $\calH^1(\calF )\cong \calH^1(\calG )$. \end{proof}

\begin{cor}\label{cor:H2F}
$H^2(\calF )$ and $\calH^2(\calF )$ are of finite dimension if and only if so are $H^2(\calG )$
and  $\calH^2(\calG )$, respectively.
\end{cor}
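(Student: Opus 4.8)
The plan is to run the same spectral-sequence bookkeeping as in the proof of Corollary~\ref{cor:H1F=H1G}, but now in total degree~$2$, where the relevant differential need not vanish. First I would identify the two sides with spectral sequence data. By Theorem~\ref{thm:HF=E2} there is a topological isomorphism $H^2(\calF)\cong E_2^{0,2}$; write $V=E_2^{0,2}$ for brevity. On the other side, $(E_i,d_i)$ converges to $H^\cdot(\calG)$, so I want to isolate the contributions of total degree~$2$. Since $\g$ is compact semisimple, Whitehead's lemmas give $H^1(\g)=H^2(\g)=0$, so Theorem~\ref{thm:E2} yields $E_2^{1,1}=H^1(\g)\otimes E_2^{0,1}=0$ and $E_2^{2,0}=H^2(\g)\otimes E_2^{0,0}=0$; hence $E_\infty^{1,1}=E_\infty^{2,0}=0$. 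As the filtration of $H^2(\calG)$ is finite, only the graded piece $E_\infty^{0,2}$ survives, and I would conclude $H^2(\calG)\cong E_\infty^{0,2}$ as topological vector spaces.

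Next I would locate $E_\infty^{0,2}$ inside $V$. Because $E_2^{2,1}=H^2(\g)\otimes E_2^{0,1}=0$ and $E_2^{1,1}=0$, no nonzero $d_2$ enters or leaves $E_2^{0,2}$ nor reaches $E_2^{3,0}$, so $E_3^{0,2}=E_2^{0,2}=V$ and $E_3^{3,0}=E_2^{3,0}$. The only possibly nontrivial differential out of bidegree $(0,2)$ is therefore $d_3\colon V\ar E_2^{3,0}$, and since $d_r$ vanishes on $(0,2)$ for $r\ge 4$ (its target bidegree $(r,3-r)$ being zero), I get $E_\infty^{0,2}=\ker(d_3\colon V\ar E_2^{3,0})$. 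By Theorems~\ref{thm:E2} and~\ref{thm:HF=E2}, $E_2^{3,0}\cong H^3(\g)\otimes E_2^{0,0}\cong H^3(\g)\otimes H^0(\calF)$, and $H^0(\calF)\cong\R$ because the leaves are dense; hence $E_2^{3,0}\cong H^3(\g)$ is finite dimensional. Setting $W=E_2^{3,0}$ and $K=\ker d_3\subseteq V$, I have $H^2(\calG)\cong K$ with $\dim V/K=\dim\operatorname{im} d_3\le\dim W<\infty$.

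Finally I would draw both conclusions from the fact that $K$ is a finite-codimensional subspace of $V$. For the unreduced statement, $\dim H^2(\calF)=\dim V=\dim K+\dim V/K=\dim H^2(\calG)+\dim\operatorname{im} d_3$, so one side is finite dimensional exactly when the other is. For the reduced statement, I would transport the trivial subspaces along the topological isomorphisms: $\calH^2(\calF)\cong V/\overline{\{0\}}_V$ and $\calH^2(\calG)\cong K/\overline{\{0\}}_K$, where $\overline{\{0\}}_K=K\cap\overline{\{0\}}_V$ since $K$ carries the subspace topology. The inclusion $K\hookrightarrow V$ then induces an injection $\calH^2(\calG)\hookrightarrow\calH^2(\calF)$ whose cokernel $V/(K+\overline{\{0\}}_V)$ is a quotient of the finite-dimensional space $V/K$; hence $\dim\calH^2(\calF)$ and $\dim\calH^2(\calG)$ differ by a finite amount, and the equivalence follows.

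The main delicate point is the topological bookkeeping rather than the algebra. I expect the real work to be in checking that the abutment isomorphism $H^2(\calG)\cong E_\infty^{0,2}$ and the identification $E_\infty^{0,2}=\ker d_3$ are isomorphisms of topological vector spaces with $K$ carrying the subspace topology from $V$, and that $d_3$ is continuous, so that $\calH^2(\calG)$ really is the Hausdorffification of $K\subseteq V$; this is precisely the kind of argument already implicit in Corollary~\ref{cor:H1F=H1G}, where the single-step filtration made it transparent. Once that is in place, the finiteness comparison is immediate.
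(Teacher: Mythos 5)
Your proposal is correct and follows essentially the same route as the paper's proof: both kill the columns $u=1,2$ via Theorem~\ref{thm:E2} and Whitehead's lemmas, identify $H^2(\calG )\cong E_\infty^{0,2}$ with $\ker\bigl(d_3\colon E_2^{0,2}\ar E_2^{3,0}\bigr)$ sitting inside $H^2(\calF )\cong E_2^{0,2}$ via Theorem~\ref{thm:HF=E2}, and conclude from the finite dimensionality of $E_2^{3,0}\cong H^3(\g )$. Your more explicit handling of the reduced case (transporting the closures of the trivial subspaces along the inclusion of the kernel) is just an unpacking of the paper's closing remark that ``the result follows'' from $H^2(\calG )$ being the kernel of a continuous map into $H^3(\g )$.
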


\begin{proof} The leaves of $\calG $ are dense since so are the leaves of
\calF. Thus $H^0(\calG )\cong\Bbb R$, yielding
$E_2^{\cdot,0}\cong H^{\cdot}(\g )$ by Theorem~\ref{thm:E2}. On the other hand,
$H^1(\g )=H^2(\g )=0$ since $\g$ is compact semisimple \cite{Poor}. So
$E_i^{1,\cdot}=E_i^{2,\cdot}=0$ for $2\leq i\leq\infty$ by Theorem~\ref{thm:E2}. Hence
$E_3^{0,2}=E_2^{0,2}\cong H^2(\calF )$ (using Theorem~\ref{thm:HF=E2}), and
$E_3^{3,0}=E_2^{3,0}\cong H^3(\g )$. Therefore, since
$$E_{\infty}^{0,2}=E_4^{0,2}=\ker(d_3:E_3^{0,2}\ar E_3^{3,0})\;,$$
$H^2(\calG )\cong E_{\infty}^{0,2}$ can be identified to the kernel of some
continuous homomorphism of $H^2(\calF )$ to $H^3(\g )$, and the result
follows. \end{proof}

\begin{cor}\label{cor:css,k=1,2}
Suppose $M$ is oriented. Let $\iota_i:K_i\ar M$, $i=1,2$,  
be smooth immersions of closed
oriented manifolds of complementary dimension which define homology classes of $M$ with
non-trivial  intersection. Let $\G_i$ be the image of the composite
$$\begin{CD}\pi_1(K_i) @>\pi_1(\iota_i)>> \pi_1(M) @>h>> G\;.\end{CD}$$ 
Suppose the group generated by $\G_1\cup\G_2$  
is not dense in $G$. If $1\leq k=\dim K_2\leq 2$, then $\dim\calH^k(\calF)=\infty$. \end{cor}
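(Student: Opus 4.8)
The plan is to push the whole question onto the suspension foliation $\calG$ built above and then to quote Corollary~\ref{cor:suspension1}. The reduction is already available in the two degrees at issue: Corollary~\ref{cor:H1F=H1G} gives $\calH^1(\calF)\cong\calH^1(\calG)$, while Corollary~\ref{cor:H2F} says that $\calH^2(\calF)$ is finite dimensional if and only if $\calH^2(\calG)$ is. Hence for $k\in\{1,2\}$ it suffices to prove $\dim\calH^k(\calG)=\infty$. This is precisely where the restriction $1\le k\le 2$ enters: in higher degree the spectral sequence of Theorems~\ref{thm:E2} and~\ref{thm:HF=E2} no longer produces so direct a comparison between $\calF$ and $\calG$, and that range is treated separately (under the transversality hypothesis appearing in Theorem~\ref{thm:css}).

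First I would record $\calG$ as a genuine suspension in the sense of Section~\ref{sec:suspension}: its base is $M$, its fibre is $T=G$, and its monodromy is the action of $\pi_1(M)$ on $G$ by left translations through $h$. Under this action the fibrewise monodromy coming from $\iota_i$ is left translation by the elements of the subgroup $\Gamma_i\subset G$ of the statement, so that $\Gamma_i$-invariance of a function or distribution on the fibre means exactly invariance under left translation by $\Gamma_i$. Because $M$ is compact and $G$ is compact (as $\g$ is compact semisimple), the total space $N$ is a closed manifold; therefore $\calH_c^k(\calG)=\calH^k(\calG)$, and the compactly supported conclusion of Corollary~\ref{cor:suspension1} is exactly what is wanted. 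The immersions $\iota_i\colon K_i\ar M$ of the statement serve directly as the input of that corollary: $K_1$ is closed, $\iota_2$ is proper (both $K_i$ being closed), they have complementary dimension, and their intersection number is non-trivial by hypothesis.

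The core step is to construct the coefficient sequences from the non-density hypothesis. Let $H$ be the closure in $G$ of the group generated by $\Gamma_1\cup\Gamma_2$. By assumption $H\neq G$, and since $H$ is a closed subgroup of the compact connected group $G$ we have $\dim H<\dim G$; thus the homogeneous space $H\backslash G$ is a compact manifold of positive dimension. On $H\backslash G$ I would pick a sequence of non-negative smooth functions $f_m$ with pairwise disjoint supports, none identically zero, together with measures $D_n$ concentrated in the interior of $\supp f_n$ with $D_n(f_n)\neq0$, so that $D_n(f_m)\neq0$ if and only if $m=n$. Pulling back to $T=G$, the $f_m$ become compactly supported (as $G$ is compact) left $H$-invariant functions and the $D_n$ become left $H$-invariant distributions; since $\Gamma_1,\Gamma_2\subset H$, they are in particular $\Gamma_1$-invariant and $\Gamma_2$-invariant, respectively. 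Corollary~\ref{cor:suspension1} then yields $\dim\calH_c^k(\calG)=\infty$ for $k=\dim K_2$, whence $\dim\calH^k(\calG)=\infty$ and finally $\dim\calH^k(\calF)=\infty$.

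The only real difficulty here is bookkeeping rather than a hard estimate: one must keep the two roles of $\pi_1(M)$ and $G$ apart---the deck group of the suspension versus the group acting on the fibre through $h$---so that the $\Gamma_i$ of the statement are matched correctly with the subgroups named in Corollary~\ref{cor:suspension1}. Once that identification is secured, the non-density hypothesis does the rest, producing a positive-dimensional quotient $H\backslash G$ that comfortably supports infinitely many orthogonal coefficient pairs; all of the genuine analysis has already been absorbed into Theorems~\ref{thm:E2} and~\ref{thm:HF=E2} and their corollaries.
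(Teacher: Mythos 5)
Your proposal is correct and takes essentially the same route as the paper, whose entire proof reads: apply Corollaries~\ref{cor:H1F=H1G}, \ref{cor:H2F} and~\ref{cor:suspension2} to $\calG$. The only cosmetic difference is that you feed explicitly constructed coefficients (functions and invariant measures obtained from the positive-dimensional quotient $H\backslash G$) directly into Corollary~\ref{cor:suspension1}, which is precisely the construction by which the paper deduces Corollary~\ref{cor:suspension2} from Corollary~\ref{cor:suspension1}, here specialized to Haar measure on the compact group $G$.
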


\begin{proof} The result follows directly applying Corollaries~\ref{cor:H1F=H1G},~\ref{cor:H2F}, 
and~\ref{cor:suspension2} to $\calG $.  \end{proof}

\begin{cor}\label{cor:css}
Suppose $M$ is oriented. Let $\iota_i:K_i\ar M$, $i=1,2$,  
be smooth immersions of closed
oriented manifolds of complementary dimension which define homology classes of $M$ with 
non-trivial 
intersection. Let $\G_i$ be the image of the composite
$$\begin{CD}\pi_1(K_i) @>\pi_1(\iota_i)>> \pi_1(M) @>h>> G\;.\end{CD}$$ 
Suppose the group generated by $\G_1\cup\G_2$  
is not dense in $G$. If $\iota_1$ is transverse to \calF,  then 
$\dim\calH^k(\calF )=\infty$ for $k=\dim K_2$.
\end{cor}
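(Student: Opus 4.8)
The plan is to reduce the statement to the suspension foliation $\calG$ on $N$, where Corollary~\ref{cor:suspension2} applies, and then to transfer the conclusion back to $\calF$ through the section $s$ and the spectral sequence $(E_i,d_i)$. First I would regard $\calG$ as a suspension foliation with base $M$, typical fibre $T=G$, and structural group the holonomy group $\G=h(\pi_1(M))$ acting on $G$ by left translations; the immersions $\iota_i\colon K_i\ar M$ then play exactly the role of the immersions into the base required by Corollary~\ref{cor:suspension1}, and $\G_i$ is the corresponding image subgroup. Since each $\G_i$ acts on $T=G$ by left translations, the closure $\overline{\G_1}$ is a closed subgroup of the compact group $G$, hence a compact Lie group, and the Haar measure $\mu$ is $\G_2$-invariant. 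It remains to produce the infinite family of $\G_1$-invariant open sets with pairwise disjoint $\G_2$-saturations demanded by Corollary~\ref{cor:suspension2}. Here the hypothesis enters: the closure $H=\overline{\langle\G_1\cup\G_2\rangle}$ is a \emph{proper} closed subgroup of the connected compact group $G$, so $\dim H<\dim G$ and $H\backslash G$ is a positive-dimensional manifold. Choosing infinitely many pairwise disjoint nonempty open sets in $H\backslash G$ and pulling them back along $G\ar H\backslash G$ yields nonempty open sets $A_m\subset G$ invariant under left translation by all of $H$, in particular $\G_1$- and $\G_2$-invariant; they have positive Haar measure and, being $\G_2$-invariant, pairwise disjoint $\G_2$-saturations. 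Corollary~\ref{cor:suspension2} then gives $\dim\calH_c^k(\calG)=\infty$ for $k=\dim K_2$, and since $N$ is compact this reads $\dim\calH^k(\calG)=\infty$.

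The second, delicate, step is to descend from $\calG$ to $\calF$. The classes witnessing $\dim\calH^k(\calG)=\infty$ are, by Theorem~\ref{thm:subfoliations} and Proposition~\ref{prop:duality}, the leafwise Poincar\'e duals $\zeta_m$ of $(\tilde\iota_1,f_m)$, where $\tilde\iota_1\colon M_1\ar N$ is the transversely regular subfoliation over $K_1$; by Remark~\ref{rem:subfoliations} they have representatives supported in any neighbourhood of $\tilde\iota_1(M_1)$. Under the identifications $H^\cdot(\calF)\cong E_2^{0,\cdot}$ of Theorem~\ref{thm:HF=E2} and $E_2^{u,v}\cong H^u(\g)\otimes E_2^{0,v}$ of Theorem~\ref{thm:E2}, the pullback $s^\ast\colon\calH^k(\calG)\ar\calH^k(\calF)$ is the edge homomorphism $\calH^k(\calG)\ar E_\infty^{0,k}\subseteq E_2^{0,k}$, whose kernel is the first filtration level $F^1$. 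The point of assuming $\iota_1$ transverse to $\calF$ is that then $s\iota_1\colon K_1\ar N$ is transverse to $\calG$, so the normal bundle of the dual subfoliation lies in the leafwise direction $T\widetilde{\calF}$ rather than in the complementary $G$-direction $\nu$; consequently each $\zeta_m$ has filtration degree $0$, and $s^\ast\zeta_m$ coincides with the class obtained by applying Proposition~\ref{prop:duality} directly inside $\calF$ to the subfoliation $\iota_1^\ast\calF$ on $K_1$ with basic function $f_m$. This is precisely the mechanism that fails for general $k$ without transversality; for $k=1,2$ it is unnecessary because $H^1(\g)=H^2(\g)=0$ collapses the relevant part of the filtration, which is why the case $1\le k\le 2$ (Corollary~\ref{cor:css,k=1,2}) needs only Corollaries~\ref{cor:H1F=H1G} and~\ref{cor:H2F}.

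Finally I would prove independence. Setting $P_n=D_n\circ\tilde\iota_2^\ast\in\calH^k(\calG)'$, Proposition~\ref{prop:duality} gives $P_n(\zeta_m)=\langle(\tilde\iota_1,f_m),(\tilde\iota_2,D_n)\rangle\neq0$ iff $m=n$. Because $D_n$ detects only the top leafwise degree along $\calF_2$, one expects $P_n$ to vanish on $F^1\calH^k(\calG)=\ker s^\ast$ and hence to factor as $\overline{P}_n\circ s^\ast$ for some continuous $\overline{P}_n\in\calH^k(\calF)'$; then $\overline{P}_n(s^\ast\zeta_m)=P_n(\zeta_m)\neq0$ iff $m=n$. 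Since the $\overline{P}_n$ are continuous they annihilate the closure of the trivial subspace, so the $s^\ast\zeta_m$ are linearly independent in the reduced cohomology $\calH^k(\calF)$, giving $\dim\calH^k(\calF)=\infty$. I expect the main obstacle to be exactly the filtration bookkeeping of the middle paragraph: verifying rigorously that transversality of $\iota_1$ forces filtration degree $0$, and that $P_n$ descends through $s^\ast$, which is where the delicate arguments modelled on \cite{Alv5} (already used for Theorems~\ref{thm:E2} and~\ref{thm:HF=E2}) are needed; a secondary technical point is arranging $s$, $\tilde\iota_1$ and $\tilde\iota_2$ to be mutually transverse in $N$ by a small deformation.
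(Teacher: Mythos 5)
Your first paragraph agrees with the paper (Corollary~\ref{cor:suspension2} applied to the suspension $\calG$, with $T=G$, Haar measure, and pullbacks of disjoint open sets of $H\backslash G$), and your descent framework --- the injection of $\calH^\cdot(\calG)/F^1\calH^\cdot(\calG)$ into $\calH^\cdot(\calF)$ via Theorems~\ref{thm:E2} and~\ref{thm:HF=E2}, together with the functionals $P_n=D_n\circ\tilde{\iota}_2^\ast$ --- is also the paper's. But the pivotal step, which you flag and leave as ``one expects,'' is exactly where your justification fails. The heuristic ``$D_n$ detects only the top leafwise degree along $\calF_2$, hence $P_n$ vanishes on $F^1$'' is not correct: for $\alpha\in F^1\Omega^k(\calG)$ the pullback $\tilde{\iota}_2^\ast\alpha$ need not vanish, because the leaves of $\calG_2$ cover $K_2$ and are in general position with respect to the splitting $T\calG=\nu\oplus T\widetilde{\calF}$ --- they are not tangent to $\widetilde{\calF}$ --- so $\tilde{\iota}_2^\ast\alpha$ is just some top-degree leafwise form on $\calG_2$, with which $D_n$ can pair non-trivially. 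Your reading of the transversality hypothesis is also off: $\tilde{\iota}_1:\iota_1^\ast N\ar N$ is transverse to $\calG$ for \emph{any} $\iota_1$ (a suspension foliation is transverse to the fibres, so every pullback along the base is transverse to it), and producing one representative of $\zeta_m$ of bidegree $(0,k)$ would not show $\zeta_m\notin F^1H^k(\calG)$, let alone that the $\zeta_m$ are independent modulo $F^1$, which is what is actually needed.

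What the paper does, and what is missing from your proposal, is a geometric use of the transversality of $\iota_1$ on the \emph{other} immersion: since $\iota_1$ is transverse to $\calF$, one may deform $\iota_2$ so that, for some open $U\supset\iota_1(K_1)$, each connected component of $\iota_2(K_2)\cap U$ lies in a single leaf of $\calF$; then $\tilde{\iota}_2^\ast$ annihilates $F^1\Omega^\cdot(\calG)$ over $\tilde{\iota}_2^{-1}\pi_N^{-1}(U)$ --- but only there. Because this control is purely local near $\iota_1(K_1)$, your global claim that $P_n$ vanishes on all of $F^1\calH^k(\calG)$ is out of reach (and is not what the paper proves); instead one localizes: by Remark~\ref{rem:subfoliations} the $\zeta_m$ lie in the subspace $\widetilde{\calH}^k$ of classes with representatives supported in $\pi_N^{-1}(U)$ for every such $U$, so it suffices to show $\tilde{\iota}_2^\ast\bigl(F^1\widetilde{H}^k\bigr)=0$. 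Even this needs a further ingredient you don't supply: for $\xi\in F^1\widetilde{H}^k$ one only gets that $\tilde{\iota}_2^\ast\xi$ restricts to an exact form over $\tilde{\iota}_2^{-1}\pi_N^{-1}(V)$, where $V$ is an open set with $U\cup V=M$ and $V\cap\iota_1(K_1)=\emptyset$; upgrading this to $\tilde{\iota}_2^\ast\xi=0$ requires choosing $U$, $V$, $\iota_2$ with contractible components of $\iota_2^{-1}(U)$ and $\iota_2^{-1}(U\cap V)$ and a Mayer--Vietoris argument giving $H^k(\calG_2)\cong H^k(\calG_{2,V})$, which needs $k>2$ --- whence the paper's preliminary reduction to $k>2$ via Corollary~\ref{cor:css,k=1,2}. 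Finally, none of this is ``modelled on \cite{Alv5}'': the averaging arguments of \cite{Alv5} enter only inside Theorems~\ref{thm:E2} and~\ref{thm:HF=E2}, which both you and the paper use as black boxes; the new work in this corollary is the deformation of $\iota_2$ plus the Mayer--Vietoris argument, and that is the genuine gap in your proposal.
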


\begin{proof} By Corollary~\ref{cor:css,k=1,2}, we can assume $k>2$. Let
$F^lH^{\cdot}(\calG )$ and 
$F^l\calH^{\cdot}(\calG )$, $l=0,1,2,\ldots$,  
be the filtrations of $H^{\cdot}(\calG )$ and $\calH^{\cdot}(\calG )$
induced by \eqref{e:filtration}. We have
$$H^\cdot(\calG )/F^1H^\cdot(\calG )\cong E_{\infty}^{0,\cdot}\subset E_2^{0,\cdot}\cong
H^\cdot(\calF)\;,$$ where both isomorphisms preserve the topologies, and $E_\infty^{0,\cdot}$
is a closed subspace of $E_2^{0,\cdot}$. (The last isomorphism follows from
Theorem~\ref{thm:HF=E2}.) So $\calH^\cdot(\calG )/F^1\calH^\cdot(\calG )$ can be injected into
$\calH^\cdot(\calF )$, and it is  enough to prove that $\calH^k(\calG )/F^1\calH^k(\calG )$ is of
infinite dimension.

 This is a special case of the setting of Theorem~\ref{thm:subfoliations} and 
Corollaries~\ref{cor:suspension1} and~\ref{cor:suspension2}. The proofs of those results yield 
linearly independent classes $\zeta_m\in\calH^k(\calG )$. In this case, we shall prove
that the $\zeta_m$ are also linearly independent modulo $F^1\calH^k(\calG )$.

Consider the pull-back bundles $\iota_i^\ast N$ over $K_i$. The canonical maps  
$\tilde{\iota}_i:\iota_i^\ast N\ar N$ are immersions 
transverse to $\calG $, which thus can be considered as transversely regular immersions of
$(\iota_i^\ast N,\calG _i)$ into $(N,\calG )$, where 
$\calG _i=\tilde{\iota}_i^\ast\calG $. We can assume the $\iota_i$ intersect each other
transversely, and thus the $\tilde{\iota}_i$ intersect transversely in $\calG $.

Let $\widetilde{H}^\cdot\subset H^{\cdot}(\calG )$ and 
$\widetilde{\calH}^{\cdot}\subset{\calH}^{\cdot}(\calG )$ be the subspaces given by the classes
that have representatives supported in $\pi_N^{-1}(U)$ for any open subset $U\subset M$
containing $\iota_1(K_1)$. Set $F^1\widetilde{H}^\cdot= \widetilde{H}^\cdot\cap F^1H^\cdot(\calG )$
and  $F^1\widetilde{\calH}^\cdot=
\widetilde{\calH}^{\cdot}\cap F^1{\calH}^{\cdot}(\calG )$. Since
$\zeta_m\in\widetilde{\calH}^k$ by Remark~\ref{rem:subfoliations}, it is enough to prove that
the $\zeta_m$ are linearly independent modulo $F^1\widetilde{\calH}^k$.
Hence, according to the proof of Theorem~\ref{thm:subfoliations}, it is enough to prove that
$\iota_2$ can be chosen so that $\tilde{\iota}_2^\ast\left(F^1\widetilde{\calH}^k\right)=0$ where
$\tilde{\iota}_2^\ast: {\calH}^\cdot(\calG )\rightarrow\calH^\cdot(\calG _2)$. In fact
we shall prove the stronger property that the choice of $\iota_2$ can be made so that 
$\tilde{\iota}_2^\ast\left(F^1\widetilde{H}^k\right)=0$ for $\tilde{\iota}_2^\ast:H^\cdot(\calG )\ar
H^\cdot(\calG _2)$. 
 
Since
$\iota_1$ is transverse to \calF,  we can choose $\iota_2$ such that, for some open subset
$U\subset M$  containing $\iota_1(K_1)$, each connected component of
$\iota_2(K_2)\cap U$ is contained in some leaf of \calF. So, for every leaf $L_2$ of $\calG _2$,
the connected components of  $\tilde{\iota}_2(L_2)\cap\pi_N^{-1}(U)$ are contained in leaves of
$\widetilde{\calF}$, yielding $\tilde{\iota}_2^\ast\alpha=0$ over $\tilde{\iota}_2^{-1}\pi_N^{-1}(U)$ 
for any $\alpha\in F^1\Omega^\cdot(\calG )$. Moreover $U$ and $\iota_2$ can be
chosen so that the connected components of $\iota_2^{-1}(U)$ are contractible; thus
$\tilde{\iota}_2^{-1}\pi_N^{-1}(U)\equiv\iota_2^{-1}(U)\times G$ canonically, where the
slices $\iota_2^{-1}(U)\times \{\ast\}$ are the leaves of
the restriction $\calG _{2,U}$ of $\calG _2$ to $\tilde{\iota}_2^{-1}\pi_N^{-1}(U)$. Hence
$H^l(\calG _{2,U})=0$ for $l>0$. Finally, the above choices can be made so that, for some open
subset $V\subset M$, we have $\iota_1(K_1)\cap V=\emptyset$, $U\cup V=M$, and each connected
component of $\iota_2^{-1}(U\cap V)$ is contractible. Thus, as above, $H^l(\calG _{2,U\cap
V})=0$ for $l>0$, where $\calG _{2,U\cap V}$ is the restriction of  $\calG _2$ to
$\tilde{\iota}_2^{-1}\pi_N^{-1}(U\cap V)$. 
Therefore, by using
the Mayer-Vietoris type spectral sequence (cf. \cite{Kacimi83})
$$\cdots\ar H^{l-1}(\calG _{2,U\cap V})\ar H^l(\calG _2)
\ar H^l(\calG _{2,U})\oplus H^l(\calG _{2,V})
\ar H^l(\calG _{2,U\cap V})\rightarrow\cdots$$
and since $k>2$, we get 
\begin{equation}\label{e:HG2V}H^k(\calG _2)\cong H^k(\calG _{2,V})\end{equation} 
by the restriction homomorphism.
 
Now any $\xi\in F^1\widetilde{H}^k$ can be defined by a leafwise closed form 
$\alpha\in\Omega^k(\calG )$ supported in $M\setminus V$ with $\alpha+d_{\calG }\beta\in 
F^1\Omega^k(\calG )$ for some $\beta\in\Omega^{k-1}(\calG )$. Then 
$\tilde{\iota}_2^\ast(\alpha+d_{\calG }\beta)$ is supported in
$\tilde{\iota}_2^{-1}\pi_N^{-1}(V)$, where it is the $\calG _2$-leafwise derivative of
$\tilde{\iota}_2^\ast\beta$. So   $\tilde{\iota}_2^\ast\xi$ is mapped to zero in
$H^k(\calG _{2,V})$, and thus $\tilde{\iota}_2^\ast\xi=0$ by \eqref{e:HG2V}, which finishes the
proof. \end{proof}

\section{Case of foliations on nilmanifolds $\G\backslash H$ defined by 
normal subgroups of $H$ \label{sec:nilpotent}}

The goal of this section is to prove Theorem~\ref{thm:nilpotent}. It will be done by induction, which
needs leafwise reduced cohomology with coefficients in a vector bundle with a flat
\calF-partial connection. Thus we shall prove a more general theorem by taking arbitrary coefficients.

 For a foliation \calF\  on a manifold $M$ and a vector bundle $V$ over $M$, a flat 
\calF\ -partial connection on $V$ can be defined as a flat connection on the
restriction of $V$ to the leaves  whose local coefficients are smooth on each
foliation chart of \calF\  on $M$. So the corresponding  de~Rham derivative $d_\calF$ with
coefficients in $V$ preserves smoothness on $M$; i.e. $d_\calF$ preserves $\Omega(\calF ,V)=\cinf
(\bigwedge T\calF^\ast\otimes V)$. Then   $\calH^\cdot(\calF ,V)$ can be defined in the same way as
$\calH^\cdot(\calF )$ by using  $(\Omega(\calF ,V),d_\calF )$ instead of $(\Omega(\calF ),d_\calF )$.

Consider the following particular case. Let $H$ be a simply connected nilpotent Lie 
group, $K\subset H$ a normal connected subgroup, and $\G\subset H$ a discrete
uniform subgroup whose projection to $H/K$ is dense. Then let \calF\  be the foliation
on the nilmanifold $M=\G\backslash H$ defined as the quotient of the foliation
$\widetilde{\calF}$  on $H$ whose leaves are the translates of $K$. In this case, $M$ is closed and the
leaves of \calF\  are dense. Let $\widetilde{V}$ be an $H\times K$-vector bundle over $H$ for the left
action of $H\times K$ on $H$ given by $(h,k)h'=hh'k^{-1}$, $(h,k)\in H\times K$ and $h'\in H$. We
also consider the induced left actions of $H$ and $K$ on $H$. The space of  $H$-invariant
sections of $\widetilde{V}$ will be denoted by $\cinf\left(\widetilde{V}\right)_H$, and the subspaces of
invariant sections will be denoted in a similar way for other actions. Suppose  $\widetilde{V}$ is endowed
with an $H\times K$-invariant flat $\widetilde{\calF}$-partial connection,  and let $V$
be the induced vector bundle on $M$ with the induced flat \calF\ -partial
connection.  The structure of $H\times K$-vector bundle on $V$ canonically defines an 
action of $\frak{k}$ on $\cinf\left(\widetilde{V}\right)_H$, where $\frak{k}$ is the Lie algebra
of $K$. Moreover the induced differential map on 
$\bigwedge\frak{k}^\ast\otimes \cinf\left(\widetilde{V}\right)_H$ corresponds to
$d_{\widetilde{\calF}}$  by the canonical injection of this space in $\Omega(\calF ,V)$.

\begin{thm}\label{thm:nilpotent coefficients}
With the above notations, 
$\calH^\cdot(\calF ,V)\cong H^\cdot\left(\frak{k},\cinf\left(\widetilde{V}\right)_H\right)$.
 \end{thm}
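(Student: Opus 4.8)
The plan is to prove the equivalent, sharper statement: the canonical inclusion
$$\iota\colon\Bigl(\textstyle\bigwedge\mathfrak{k}^{\ast}\otimes\cinf(\widetilde V)_H,\ d_{CE}\Bigr)\hookrightarrow(\Omega^{\cdot}(\calF,V),d_\calF)$$
of the Chevalley--Eilenberg complex (whose cohomology is $H^{\cdot}(\mathfrak{k},\cinf(\widetilde V)_H)$, automatically Hausdorff since $\cinf(\widetilde V)_H\cong\widetilde V_e$ is finite dimensional) induces an isomorphism onto $\calH^{\cdot}(\calF,V)$. I would first identify $\Omega^{\cdot}(\calF,V)$ with the left $\G$-invariant leafwise forms on $H$, so that every reduction below becomes transparent at the level of $H$. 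The argument then goes by induction on $\dim H$. The base case is $K=H$ (codimension $0$): here $\calF$ has the single compact leaf $M=\G\backslash H$ and the statement is exactly Nomizu's theorem for nilmanifolds, in its version with coefficients in the flat bundle $V$, giving $H^{\cdot}(M,V)\cong H^{\cdot}(\mathfrak{h},\cinf(\widetilde V)_H)=H^{\cdot}(\mathfrak{k},\cdots)$.

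For the inductive step I would use the rationality of the centre $Z(H)$ with respect to $\G$ (Malcev) to choose a rational central line $\mathfrak{a}\subset\mathfrak{z}(\mathfrak{h})$, with $A=\exp\mathfrak{a}$, so that $\G\cap A$ is cocompact in $A$ and $M\to\bar M=\bar\G\backslash\bar H$ is a principal $S^1$-bundle, where $\bar H=H/A$, $\bar\G=\G A/A$. Since $\exp$ is a diffeomorphism and $K=\exp\mathfrak{k}$, the line $\mathfrak{a}$ falls into exactly one of two cases: either $\mathfrak{a}\subset\mathfrak{k}$ (the $S^1$-orbits are leafwise) or $\mathfrak{a}\cap\mathfrak{k}=0$ (the $S^1$-orbits are transverse). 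In both cases the right $A$-action on $M$ descends $\calF$ to a foliation $\bar\calF$ on $\bar M$ of the same homogeneous type: in the tangent case $\bar K=K/A$, so $\dim\mathfrak{k}$ drops by one and the codimension is unchanged; in the transverse case $\bar K=KA/A\cong K$, so $\mathfrak{k}$ is unchanged and the codimension drops. In each case $\bar\G$ still projects densely to $\bar H/\bar K$, and one constructs the required $\bar H\times\bar K$-bundle $\bar V$ (with its invariant flat $\bar{\widetilde\calF}$-connection) from $\widetilde V$; this is precisely why the theorem must be stated for arbitrary coefficients, so that the inductive hypothesis applies to $(\bar\calF,\bar V)$.

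The heart of the step is a Fourier analysis along the central $S^1$. Decomposing $\Omega^{\cdot}(\calF,V)$ into modes $\Omega_n$ for the $A$-action, the generator $X\in\mathfrak{a}$ is central and nilpotent, so its leafwise Lie derivative acts on $\Omega_n$ as $in\cdot\mathrm{id}+N$ with $N$ nilpotent, hence invertibly for $n\neq 0$. In the tangent case $X\in T\calF$, so Cartan's formula $\theta_X=d_\calF i_X+i_Xd_\calF$ applies and $\tfrac{1}{in+N}i_X$ is a contracting homotopy: the nonzero modes are acyclic, while the zero mode is the leafwise complex of $\bar\calF$ enlarged by the closed leafwise angular form, producing a Gysin-type long exact sequence whose connecting map is cup product with the leafwise Euler class of the central extension $0\to\mathfrak{a}\to\mathfrak{k}\to\bar{\mathfrak{k}}\to0$; this matches, via $\iota$ and naturality, the Hochschild--Serre sequence of the same central ideal on the algebraic side, so the inductive isomorphism for $\bar\calF$ upgrades to one for $\calF$. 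In the transverse case $d_\calF$ does not differentiate the fibre coordinate, hence preserves each mode and acts there as the leafwise derivative with coefficients in a flat bundle $\bar V_n$ on $\bar M$; applying the inductive hypothesis mode by mode and reassembling gives $\calH^{\cdot}(\calF,V)\cong\widehat{\bigoplus}_n\,H^{\cdot}(\mathfrak{k},\cinf(\widetilde{\bar V}_n)_{\bar H})$, and $\bar H$-invariance of the coefficients collapses this to the finitely many modes assembling $\cinf(\widetilde V)_H$, yielding $H^{\cdot}(\mathfrak{k},\cinf(\widetilde V)_H)$ as required.

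The main obstacle I expect is the analytic and topological control of the \emph{reduced} cohomology under this decomposition: one must show that the Fourier splitting is compatible with the weak $\cinf$ topology, that $\ker d_\calF$ and, crucially, $\overline{\operatorname{im} d_\calF}$ split as completed direct (or direct product) sums over the modes, and that the homotopy operators $\tfrac{1}{in+N}i_X$ and their mode-wise inverses are continuous and sum to continuous operators. This is exactly the point where passing to reduced cohomology is indispensable: the unreduced mode sums are obstructed by small-divisor phenomena, whereas after taking closures of the images the nonsurviving modes drop out cleanly. The remaining care is bookkeeping: verifying the naturality of $\iota^{\ast}$ through each reduction and the precise identification of the descended coefficient bundles $\bar V$ and $\bar V_n$ (and their invariant sections as $\mathfrak{k}$- or $\bar{\mathfrak{k}}$-modules), which makes the Gysin/Hochschild--Serre comparison an honest commutative diagram rather than a mere abstract isomorphism.
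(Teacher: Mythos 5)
Your transverse case is sound, and it is essentially the paper's own first case: there the standing hypothesis $K\cap\Gamma=1$ forces every rational central circle to be transverse to \calF, one quotients by it, Fourier-decomposes, and observes that invariance of the coefficients kills all nonzero modes. The genuine gap is in your tangent case, and that case cannot be dodged. Concretely, let $H$ be the $5$-dimensional Heisenberg group with basis $X_1,X_2,Y_1,Y_2,Z$, $[X_i,Y_i]=Z$, let $\Gamma$ be the integral lattice, and let $\mathfrak{k}=\operatorname{span}(Z,\,X_1+\sqrt2\,X_2,\,Y_1+\sqrt2\,Y_2)$; this is an ideal, the projection of $\Gamma$ to $H/K\cong\R^2$ is the dense subgroup $(\Z+\sqrt2\,\Z)^2$, and $Z(H)=\R Z\subset\mathfrak{k}$, so the \emph{only} central line is tangent to \calF\ and your transverse case can never be invoked. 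Now, your tangent case hinges on a Gysin-type long exact sequence for the mode-zero complex, compared via the five lemma with the Hochschild--Serre sequence of $0\to\mathfrak{a}\to\mathfrak{k}\to\bar{\mathfrak{k}}\to0$. But long exact sequences do not survive the passage to \emph{reduced} cohomology: the maps are still defined, yet exactness fails because images of continuous chain maps need not be closed, and dividing by $\overline{\{0\}}$ destroys exactly the kernels and images the five lemma needs. This is not a hypothetical worry in this setting --- for Liouville linear flows on $T^2$ (which are foliations of the type covered by the theorem, with $H=\R^2$) one has $\dim H^1(\calF)=\infty$ while $\calH^1(\calF)\cong\R$, so the unreduced terms of your sequence are enormous and non-Hausdorff precisely where you need to chase. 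Your final paragraph flags the topology of the Fourier splitting, which is in fact the easy part (continuous mode projections commuting with $d_\calF$ plus partial-sum convergence handle both your case and the paper's), but not this exactness failure, which is the real obstruction.

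The paper never performs a tangent reduction, precisely to avoid this. Its induction is on the codimension, and its general case runs differently: one Mal'cev-completes the pair $(G,\Gamma_1)=(H/K,\operatorname{pr}(\Gamma))$ to obtain a nilmanifold $M_1=\Gamma_1\backslash H_1$ carrying a foliation $\calG$ that does satisfy the first case (since $K_1\cap\Gamma_1=1$), together with a fiber bundle $f:M\ar M_1$ with compact nilmanifold fibers and $\calF=f^\ast\calG$; the coefficients over $M_1$ become the finite-rank bundle of fiberwise cohomologies (finite by Nomizu's theorem applied to the fibers). The reduced-cohomology comparison is then made not by a five lemma but by showing that the subcomplex $\overline{E}_i$ coming from $H$-left-invariant forms splits off \emph{topologically} at every page of the resulting spectral sequence, $E_i\cong\overline{E}_i\oplus\overline{0_i}$, whence $\calH^\cdot(\calF,V)\cong\overline{E}_\infty$. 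Something of that strength --- a topological splitting of your Chevalley--Eilenberg subcomplex inside the mode-zero cone, rather than naturality between two long exact sequences --- is what your tangent case would require; alternatively, the tangent situation can simply be routed through the paper's fibration argument.
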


The result will follow by induction on the
codimension $q$ of \calF.

For $q=0$ and $V$ the trivial line bundle, this is just a well known theorem of K.~Nomizu 
\cite{Nomizu}. If $q=0$ and $V$ is arbitrary, the result still follows with the obvious adaptation of the
arguments in \cite{Nomizu}.

Suppose $q>0$ and the result is true for foliations of codimension less than $q$. The proof has
two cases.

\begin{case}\label{case:1} Assume $K\cap\G=1$. The group $\G$ is
nilpotent since so is $H$, thus the center of $\G$ is non-trivial. Let $a$
be a non-trivial element in the center of $\G$. By the universal property of
Mal'cev's completion \cite{Mal'cev}, there exists a one dimensional connected
subgroup $L$ of the center of $H$ containing $\langle a\rangle$ as a discrete
uniform subgroup. $L$ is isomorphic to $\Bbb R$ since $H$ is simply connected.
Let $H_1=H/L$, and $\G_1=\G/\langle a\rangle$. Clearly $\G_1$ is
canonically injected in $H_1$ as a discrete uniform subgroup. We get $L\cap K=1$
because $\langle a\rangle\cap K=1$, and thus there is a canonical injection of
$K$ into $H_1$ as a normal subgroup, defining a foliation $\calF_1$ on the
nilmanifold $M_1=\G_1\backslash H_1$. $\calF_1$ is a foliation of the type
considered in the statement of this theorem, of codimension $q-1$, but observe
that the canonical injection of $K$ into $H_1$ may not have trivial intersection
with $\G_1$. The projection $H/\langle a\rangle \ar H_1$ is canonically an
$S^1$-principal bundle (considering $S^1\equiv  L/\langle
a\rangle$), so the induced map $\pi : M \ar M_1$ is also an $S^1$-principal bundle in a canonical
way. Then $V$ canonically is an $S^1$-vector bundle so that  the partial connection is
invariant, and thus induces the vector bundle $V_1=V/S^1$ over  $M_1$  with the corresponding
flat $\calF_1$-partial connection. The lifting of $V_1$ to $H_1$ is 
$\widetilde{V}_1=\widetilde{V}/L$, which satisfies the same properties as $\widetilde{V}$
with respect to $K_1$  instead of $K$.

For each $x\in M_1$ and each $m\in\Bbb Z$, define
\begin{eqnarray*}
C_{m,x}&=&\{f\in \cinf (\pi^{-1}(x),\C ):\  
f(y\theta)=f(y)\,\mathrm{e}^{2\pi m\theta\mathrm{i}}\ \\
&&\text{for all}\ y\in \pi^{-1}(x)\ \text{and all}\ \theta\in S^1\equiv\R/\Z \}\;.
\end{eqnarray*} 
It is easy to see that $$C_m=\bigsqcup_{x\in M_1}C_{m,x}$$ is a
one-dimensional $\Bbb C$-vector bundle over $M_1$ in a canonical way. For $m\in\Bbb Z$,
define also 
\begin{eqnarray*}
\Omega(\calF ,V\otimes\C )^m&=&\{\alpha\in \Omega(\calF ,V\otimes\C ):\ \alpha(y\theta)=\alpha(y)\,
\mathrm{e}^{2\pi m\theta\mathrm{i}}\ \\  
&&\text{for all}\ y\in\pi^{-1}(x)\ \text{and all}\ \theta\in
S^1\}\;, \end{eqnarray*} 
and similarly define $\cinf\left(\left(\widetilde{V}/\langle a\rangle\right)\otimes\C\right)^m$
considering the $S^1$-principal bundle $H/\langle a\rangle\ar H_1$. By the
Fourier series expression for functions on $S^1$, we get that $\Omega(\calF,V\otimes\C )$
is the $\cinf $ closure of  $$\bigoplus_{m\in\Bbb Z}\Omega(\calF,V\otimes\C )^m\;.$$ It can be
easily seen that there is a canonical isomorphism \begin{equation}\label{e:isomorphism}
\Omega(\calF_1,V_1\otimes C_m)\cong \Omega(\calF ,V\otimes\C )^m
\end{equation}
defined by $\pi^{\ast}$ and the canonical identity $$\cinf (C_m)\equiv
\cinf (M,\C )^m\;.$$ 
Since \calF\  is preserved by 
the $S^1$-action on $M$, $d_\calF $
preserves each $\Omega(\calF ,V\otimes\C)^m$ and corresponds to $d_{\calF_1}$ by
\eqref{e:isomorphism}. By induction $$\calH^{\cdot}(\calF_1,V_1\otimes
C_m)\cong H^{\cdot}\left(\frak{k},\cinf\left(\widetilde{V}_1\otimes\tilde{C}_m\right)_{H_1}\right)\;.$$
But 
$$
\cinf\left(\widetilde{V}_1\otimes\widetilde{C}_m\right)_{H_1}\cong \cinf\left(\left(\widetilde{V}/\langle
a\rangle\right)\otimes\C\right)^m_{H/\langle a\rangle}
$$ 
canonically, which is obviously 
trivial if $m\neq 0$. But $C_0$ is the trivial complex line bundle, so
\begin{eqnarray*}
\calH^{\cdot}(\calF ,V\otimes\C ) & \cong & \calH^{\cdot}(\calF_1,V_1\otimes C_0) \\
& \cong & H^{\cdot}\left(\frak{k},\cinf\left(\widetilde{V}_1\otimes\C\right)_{H_1}\right) \\
& \cong & H^{\cdot}\left(\frak{k},\cinf\left(\widetilde{V}\otimes\C\right)_H\right)\;.
\end{eqnarray*} 
\end{case}

\begin{case} In the general case, let $G=H/K$ and $\G_1$ the projection of
$\G$ to $G$. We use Mal'cev's construction for the pair $(G,\G_1)$.
It yields a simply connected nilpotent Lie group $H_1$ containing $\G_1$
as a discrete uniform subgroup, and a surjective homomorphism
$D_1:H_1\ar G$ which is the identity on
$\G_1$. The kernel $K_1$ of $D_1$ defines a foliation $\calG $ of codimension $q$ on
the nilmanifold $M_1=\G_1\backslash H_1$, and we have $K_1\cap\G_1=1$. So $\calG $ is the
type of foliation we have considered in Case~\ref{case:1}.

$\calG $ is the classifying foliation for foliations with transverse structure given
by $(G,\G_1)$. So there is a smooth map $f:M\ar M_1$ which is transverse
to $\calG $ and so that $\calF =f^{\ast}\calG $. In this particular case, $f$ can
be constructed in the following way. By the universal property of Mal'cev's
construction, the surjective homomorphism of $\G$ to $\G_1$ can be uniquely
extended to a surjective homomorphism $\tilde{f}:H\ar H_1$, which defines a
map  $f : M \ar M_1$. We have $D_1\tilde{f}=D$. So $K$ is projected onto $K_1$, and thus $\calF
=f^{\ast}\calF_1$. Moreover $f$ is a locally trivial bundle with fiber the nilmanifold
$P/(P\cap\G)$, where $P$ is the kernel of $\tilde{f}$.

Fix a vector subbundle $\nu\subset T\calF $
which is complementary to the subbundle $\tau\subset T\calF $ of vectors that are tangent to
the fibers of $f$. Then we get a canonical isomorphism
$$\bigwedge T\calF^{\ast}\otimes V\cong\bigwedge\nu^{\ast}\otimes\bigwedge\tau^{\ast}\otimes
V\;,$$ yielding a bigrading of $\Omega(\calF ,V)$ given by
$$
\Omega^{u,v}(\calF ,V) = \cinf\left(\bigwedge^u\nu^{\ast}\otimes\bigwedge^v\tau^{\ast}\otimes
V\right)\;.
$$ 
Consider the filtration of $\Omega(\calF,V)$ given by the differential subspaces
$$F^k\Omega(\calF,V)=\bigoplus_{u\geq k}\Omega^{u,\cdot}(\calF,V)\;,$$
which depend only on \calF\  and $V$; in fact they could be defined without using $\nu$. This 
filtration induces a spectral sequence $(E_i,d_i)$ converging to $H^{\cdot}(\calF ,V)$, whose 
terms $(E_0,d_0)$ and $(E_1,d_1)$ can be described as follows. 
The derivative $d_\calF $ decomposes as sum of bihomogeneous operators $d_{\calF ,0,1}$,  
$d_{\calF ,1,0}$ and $d_{\calF ,2,-1}$, where each double subindex indicates the corresponding
bidegree. These operators satisfy identities which are similar to those in
\eqref{e:di,j 1} and~\eqref{e:di,j 2}, yielding 
$$(E_0,d_0) \equiv (\Omega(\calF ,V),d_{\calF ,0,1})\;,$$
$$(E_1,d_1) \equiv (H(\Omega(\calF ,V),d_{\calF ,0,1}),d_{\calF ,1,0\ast})\;.$$

Let $\frak{k}_1$ be the Lie algebra of $K_1$. Each $X\in\frak{k}_1$ canonically defines
a vector field $X_1$ on $M_1$ which is tangent to the leaves of $\calF_1$. Let $X_{\nu}$ be the
unique vector field on $M$ which is a section of $\nu$ and projects to $X_1$. For $\alpha\in
\Omega^{0,v}(\calF )$ and $s\in \cinf (V)$, define $\theta_X(\alpha\otimes s)$ to be the
$(0,\cdot)$-component of $$\theta_{X_{\nu}}\alpha\otimes s+\alpha\otimes\nabla_{X_{\nu}}s\;,$$
where $\nabla$ denotes the flat \calF\ -partial connection of $V$. It can be easily checked that
$\theta_Xd_{\calF ,0,1}=d_{\calF ,0,1}\theta_X$. 
So $\theta_X$ defines an operator, also denoted by $\theta_X$, on $E_1^{0,\cdot}$. In
this way, we get a representation $\theta$ of $\frak{k}_1$ on $E_1^{0,\cdot}$, and a
canonical isomorphism $E_2^{u,v} \cong H^u(\frak{k}_1,\theta)$.

Define 
$$V_{1,y}=H^\cdot\left(f^{-1}(y),V|_{f^{-1}(y)}\right)\;,\quad y\in M_1\;,$$
$$V_1=\bigsqcup_{y\in M_1}V_{1,y}\;,$$ and let $\widetilde{V}_1$ be the lifting of $V_1$ to
$H_1$. It is easy to see that $\widetilde{V}_1$ canonically is a $H_1\times K_1$-vector
bundle over the $H_1\times K_1$-manifold $H_1$ with an $H_1\times K_1$-invariant flat 
$\widetilde{\calF}_1$-partial
connection. (The fibers of $\widetilde{V}_1$ are of finite
dimension since the fibers of $f$ are compact.) It is also easily seen that there is
a canonical isomorphism 
$\cinf (V_1) \cong E_1^{0,\cdot}$. 
Moreover the
representation of $\frak{k}_1$ on $E_1^{0,\cdot}$ corresponds to the representation
of $\frak{k}_1$ on $\cinf\left(\widetilde{V}_1\right)$ defined by the flat partial connection
of  $\widetilde{V}_1$. So $$E_2^{u,\cdot} \cong
H^u\left(\frak{k}_1,\cinf\left(\widetilde{V}_1\right)\right)\cong H^u(\calF_1,V_1)\;.$$

Let ${\mathcal E}_i$ be the quotient of $E_i$ over the closure $\overline{0_i}$ of its trivial subspace.
Then 
$$
{\mathcal E}_2^{u,\cdot}\cong \calH^u(\calF_1,V_1)\cong
H^u\left(\frak{k}_1,\cinf\left(\widetilde{V}_1\right)_{H_1}\right)
$$ 
by Case~\ref{case:1}.

If the above filtration is restricted to the space of differential forms in $\Omega(\calF,V)$
whose lifting to $H$ is $H$-left invariant, we get a spectral sequence
$(\overline{E}_i,\overline{d}_i)$ converging to
$H^\cdot\left(\frak{k}_1,\cinf\left(\widetilde{V}\right)_H\right)$, and there is a canonical homomorphism 
$(\overline{E}_i,\overline{d}_i) \ar (E_i,d_i)$ of spectral sequences. Analogously, we have a canonical
isomorphism $$\overline{E}_2^{u,\cdot} \cong H^u(\frak{k}_1,\cinf (V_1)_{H_1})\;.$$ So the composite
$\overline{E}_2 \ar E_2 \ar {\mathcal E}_2$
is an isomorphism, and thus
$E_2 \cong \overline{E}_2 \oplus \overline{0_2}$
as differential complexes. Then
$E_3\cong \overline{E}_3 \oplus H(\overline{0_2},d_2)$, 
yielding $H(\overline{0_2},d_2) \cong \overline{0_3}$, and the above decomposition is of
differential complexes. We get
$E_4\cong \overline{E}_4 \oplus H(\overline{0_3},d_3)$. 
Continuing with these arguments, we finally obtain  
$E_i \cong \overline{E}_i \oplus \overline{0_i}$
as topological differential complexes for $i\geq 2$, and thus
$$H^\cdot(\calF ,V) \cong E_\infty \cong \overline{E}_\infty \oplus \overline{0_\infty}\;.$$
Hence 
$$\calH^\cdot(\calF ,V) \cong \overline{E}_\infty \cong H^\cdot(\frak{k},\cinf (V)_H)$$
as desired.
\end{case} 

\begin{rem}
For general Lie foliations with dense leaves and nilpotent structural Lie algebra, the
classifying foliations are of the type considered in Theorem~\ref{thm:nilpotent}. On the one
hand, if the ambient manifold is closed and the classifying map can be chosen to be a fiber
bundle, then a spectral sequence argument shows that the leafwise reduced cohomology is of finite
dimension. On the other hand, if the classifying map has unavoidable singularities, then they
should correspond to handles on the leaves and the leafwise reduced cohomology is of infinite
dimension by Corollary~\ref{cor:homology2}. \end{rem}


\begin{thebibliography}{10}

\bibitem{Alv1}
J.~A. \'{A}lvarez L\'{o}pez.
\newblock A finiteness theorem for the spectral sequence of a {Riemannian}
  foliation.
\newblock {\em Illinois J. of Math.}, 33:79--92, 1989.

\bibitem{Alv5}
J.~A. \'{A}lvarez L\'{o}pez.
\newblock A decomposition theorem for the spectral sequence of {Lie}
  foliations.
\newblock {\em Trans. Amer. Math. Soc.}, 329:173--184, 1992.

\bibitem{AlvHector2}
J.~A. \'{A}lvarez L\'{o}pez and G.~Hector.
\newblock Leafwise homologies, leafwise cohomology, and subfoliations.
\newblock In E.~Mac\'{\i}as-Virg\'os X.~M.~Masa and J.~A.~\'Alvarez L\'opez,
  editors, {\em Analysis and Geometry in Foliated Manifolds}, pages 1--12,
  Singapore, 1995. Proceedings of the VII International Colloquium on
  Differential Geometry, Santiago de Compostela, 26--30 July, 1994, World
  Scientific.

\bibitem{AlvHurder}
J.~A. \'{A}lvarez L\'{o}pez and S.~Hurder.
\newblock Pure-point spectrum for foliation geometric operators.
\newblock Preprint, 1994.

\bibitem{AlvKordy1}
J.~A. \'{A}lvarez L\'{o}pez and Y.~A. Kordyukov.
\newblock Long time behavior of leafwise heat flow for {Riemannian} foliations.
\newblock {\em Compositio Math.}, 125: 129--153, 2001.

\bibitem{AndradeHector}
P.~Andrade and G.~Hector.
\newblock Foliated cohomology and {Thurston}'s stability.
\newblock Preprint, 1993.

\bibitem{AndradePereira}
P.~Andrade and {M. do S.} Pereira.
\newblock On the cohomology of one-dimensional foliated manifolds.
\newblock {\em Bol. Soc. Brasil. Mat.}, 21:79--89, 1990.

\bibitem{ArrautdosSantos}
J.L. Arraut and N.~M. {dos Santos}.
\newblock Linear foliations of {$T^n$}.
\newblock {\em Bol. Soc. Brasil. Mat.}, 21:189--204, 1991.

\bibitem{BottTu}
R.~Bott and L.~W. Tu.
\newblock {\em Differential Forms in Algebraic Topology}, volume~82 of {\em
  Graduate Texts in Math.}
\newblock Springer-Verlag, New York, 1982.

\bibitem{Kacimi83}
A.~El~Kacimi-Alaoui.
\newblock Sur la cohomologie feuillet\'ee.
\newblock {\em Compositio Math.}, 49:195--215, 1983.

\bibitem{Fedida}
E.~Fedida.
\newblock Feuilletages de {Lie}, feuilletages du plan.
\newblock In {\em Lect. Notes Math.}, volume 352, pages 183--195.
  Springer-Verlag, 1973.

\bibitem{Godbillon}
C.~Godbillon.
\newblock {\em Feuilletages: \'{E}tudes G\'{e}om\'{e}triques}, volume~98 of
  {\em Progress in Math.}
\newblock Birkh\"{a}user, Boston, Basel and Stuttgart, 1991.

\bibitem{GHV:III}
W.~Greub, S.~Halperin, and R.~Vanstone.
\newblock {\em Connections, Curvature, and Cohomology. Vol. III}.
\newblock Academic Press, New York, San Francisco and London, 1975.

\bibitem{Haefliger58}
A.~Haefliger.
\newblock Structures feuillet\'{e}es et cohomologie \`{a} valeurs dans un
  faisceau de groupo\"{\i}des.
\newblock {\em Comment. Math. Helv.}, 32:248--329, 1958.

\bibitem{Haefliger80}
A.~Haefliger.
\newblock Some remarks on foliations with minimal leaves.
\newblock {\em J. Differential Geom.}, 15:269--384, 1980.

\bibitem{Haefliger88}
A.~Haefliger.
\newblock Leaf closures in {Riemannian} foliations.
\newblock In {\em A F\^{e}te on Topology}, pages 3--32, New York, 1988.
  Academic Press.

\bibitem{HectorHirsch83}
G.~Hector and U.~Hirsch.
\newblock {\em Introduction to the Geometry of Foliations, Part {B}}, volume~E3
  of {\em Aspects of Mathematics}.
\newblock Friedr. Vieweg and Sohn, Braunschweig, 1983.

\bibitem{Heitsch75}
J.~L. Heitsch.
\newblock A cohomology for foliated manifolds.
\newblock {\em Comment. Math. Helv.}, 50:197--218, 1975.

\bibitem{Hurder:spectral}
S.~Hurder.
\newblock Spectral theory of foliation geometric operators.
\newblock Preprint, 1992.

\bibitem{KacimiTihami}
A.~El Kacimi-Alaoui and A.~Tihami.
\newblock Cohomologie bigradu\'{e}e de certains feuilletages.
\newblock {\em Bull. Soc. Math. Belge}, s\'{e}rie B-38, 1986.

\bibitem{Klingenberg}
W.~Klingenberg.
\newblock {\em Riemannian Geometry}, volume~1 of {\em De Gruyter Studies in
  Mathematics}.
\newblock De Gruyter, Berlin, 1982.

\bibitem{Kobayashi}
S.~Kobayashi.
\newblock On conjugate and cut loci.
\newblock In {\em Studies in Global Geometry and Analysis}, pages 96--122.
  Math. Assoc. of Amer., 1967.

\bibitem{Mal'cev}
A.I. Mal'cev.
\newblock On a class of homogeneous spaces.
\newblock {\em Transl. Amer. Math. Soc.}, 39:276--307, 1951.

\bibitem{Molino82}
P.~Molino.
\newblock G\'{e}om\'{e}trie globale des feuilletages riemanniens.
\newblock {\em Nederl. Akad. Wetensch. Indag. Math.}, 44:45--76, 1982.

\bibitem{Molino88}
P.~Molino.
\newblock {\em Riemannian Foliations}, volume~73 of {\em Progress in Math.}
\newblock Birkh\"auser Boston Inc., Boston, MA, 1988.

\bibitem{MolinoPierrot}
P.~Molino and M.~Pierrot.
\newblock Th\'{e}or\'{e}mes des slice et holonomie des feuilletages
  {Riemanniens}.
\newblock {\em Annales Inst. Fourier, Grenoble}, 37:207--223, 1987.

\bibitem{Nomizu}
K.~Nomizu.
\newblock On the cohomology of compact homogeneous spaces of nilpotent lie
  groups.
\newblock {\em Ann. of Math.}, 59:531--538, 1954.

\bibitem{Poor}
W.~A. Poor.
\newblock {\em Differential Geometric Structures}.
\newblock McGraw-Hill, New York, 1982.

\bibitem{Reinhart59}
B.~L. Reinhart.
\newblock Foliated manifolds with bundle-like metrics.
\newblock {\em Ann. of Math.}, 69:119--132, 1959.

\bibitem{Roger}
C.~Roger.
\newblock {\em M\'{e}thodes Homotopiques et Cohomologiques en Th\'{e}orie de
  Feuilletages}.
\newblock Universit\'{e} de Paris XI, Paris, 1976.

\bibitem{Stallings}
J.~Stallings.
\newblock {\em Group Theory and Three Dimensional Manifolds}.
\newblock Yale University Press, Yale, 1971.

\bibitem{Thom}
R.~Thom.
\newblock Quelques propri\'et\'es globales des vari\'et\'es diff\'erentiables.
\newblock {\em Comment. Math. Helv.}, 28:17--86, 1954.

\end{thebibliography}
\end{document}